\DeclareMathOperator{\Aut}{Aut}
\DeclareMathOperator{\id}{id}
\DeclareMathOperator{\opp}{op}
\DeclareMathOperator{\Mod}{Mod}
\DeclareMathOperator{\Hn}{H}
\DeclareMathOperator{\End}{End}
\DeclareMathOperator{\codim}{codim}
\begin{document}

\theoremstyle{plain} % style plain
\newtheorem{theorem}{Theorem}[section]
\newtheorem{corollary}[theorem]{Corollary}
\newtheorem{proposition}[theorem]{Proposition}
\newtheorem{lemma}[theorem]{Lemma}
\theoremstyle{definition} % style definition
\newtheorem{definition}[theorem]{Definition}
\newtheorem{example}[theorem]{Example}
\newtheorem{examples}[theorem]{Examples}
\newtheorem{remark}[theorem]{Remark}
\newtheorem{notation}[theorem]{Notations}

\numberwithin{equation}{section}

\newcommand{\pinv}{p_\ast^{\C^\times}\hspace{-0.2em}}
\newcommand{\On}[1]{\mathcal{O}_{#1}}
\newcommand{\En}[1]{\mathcal{E}_{#1}}
\newcommand{\Fn}[1]{\mathcal{F}_{#1}} 
\newcommand{\tFn}[1]{\mathcal{\tilde{F}}_{#1}}
\newcommand{\hum}[1]{hom_{\mathcal{A}}({#1})}
\newcommand{\hcl}[2]{#1_0 \lbrack #1_1|#1_2|\ldots|#1_{#2} \rbrack}
\newcommand{\hclp}[3]{#1_0 \lbrack #1_1|#1_2|\ldots|#3|\ldots|#1_{#2} \rbrack}
\newcommand{\partiel}[2]{\dfrac{\partial #1}{\partial #2}}
\newcommand{\catMod}{\mathsf{Mod}}
\newcommand{\Der}{\mathsf{D}}
\newcommand{\Ds}{D_{\mathbb{C}}}
\newcommand{\DG}{\mathsf{D}^{b}_{dg,\mathbb{R}-\mathsf{C}}(\mathbb{C}_X)}
\newcommand{\lI}{[\mspace{-1.5 mu} [}
\newcommand{\rI}{] \mspace{-1.5 mu} ]}
\newcommand{\Ku}[2]{\mathfrak{K}_{#1,#2}}
\newcommand{\iKu}[2]{\mathfrak{K^{-1}}_{#1,#2}}
\newcommand{\Be}{B^{e}}
\newcommand{\op}[1]{#1^{\opp}}
\newcommand{\N}{\mathbb{N}}
\newcommand{\Ab}[1]{#1/\lbrack #1 , #1 \rbrack}
\newcommand{\Du}{\mathbb{D}}
\newcommand{\C}{\mathbb{C}}
\newcommand{\Z}{\mathbb{Z}}
\newcommand{\w}{\omega}
\newcommand{\K}{\mathcal{K}}
\newcommand{\Hoc}{\mathcal{H}\mathcal{H}}
\newcommand{\env}[1]{{\vphantom{#1}}^{e}{#1}}
\newcommand{\eA}{{}^eA}
\newcommand{\eB}{{}^eB}
\newcommand{\eC}{{}^eC}
\newcommand{\cA}{\mathcal{A}} 
\newcommand{\cB}{\mathcal{B}}
\newcommand{\cD}{\mathcal{D}}
\newcommand{\cR}{\mathcal{R}}
\newcommand{\cI}{\mathcal{I}}
\newcommand{\cL}{\mathcal{L}}
\newcommand{\cO}{\mathcal{O}}
\newcommand{\cM}{\mathcal{M}}
\newcommand{\cN}{\mathcal{N}}
\newcommand{\cK}{\mathcal{K}}
\newcommand{\cC}{\mathcal{C}}
\newcommand{\cF}{\mathcal{F}}
\newcommand{\cG}{\mathcal{G}}
\newcommand{\cP}{\mathcal{P}}
\newcommand{\cQ}{\mathcal{Q}}
\newcommand{\cU}{\mathcal{U}}
\newcommand{\cE}{\mathcal{E}}
\newcommand{\cS}{\mathcal{S}}
\newcommand{\cT}{\mathcal{T}}
\newcommand{\chE}{\widehat{\mathcal{E}}}
\newcommand{\cW}{\mathcal{W}}
\newcommand{\chW}{\widehat{\mathcal{W}}}
\newcommand{\Hper}{\Hn^0_{\textrm{per}}}
\newcommand{\Dper}{\Der_{\mathrm{perf}}}
\newcommand{\Yo}{\textrm{Y}}
\newcommand{\gqcoh}{\mathrm{gqcoh}}
\newcommand{\coh}{\mathrm{coh}}
\newcommand{\cc}{\mathrm{cc}}
\newcommand{\qcc}{\mathrm{qcc}}
\newcommand{\gd}{\mathrm{gd}}
\newcommand{\qcoh}{\mathrm{qcoh}}
\newcommand{\lcl}{\mathrm{lcl}}
\newcommand{\fin}{\mathrm{fin}}
\newcommand{\obplus}[1][i \in I]{\underset{#1}{\overline{\bigoplus}}}
\newcommand{\Lte}{\mathop{\otimes}\limits^{\rm L}}
\newcommand{\te}{\mathop{\otimes}\limits^{}}
\newcommand{\btimes}{\mathop{\boxtimes}\limits^{}}
\newcommand{\pt}{\textnormal{pt}}
\newcommand{\A}[1][X]{\cA_{{#1}}}
\newcommand{\dA}[1][X]{\cC_{X_{#1}}}
\newcommand{\conv}[1][]{\mathop{\circ}\limits_{#1}}
\newcommand{\sconv}[1][]{\mathop{\ast}\limits_{#1}}
\newcommand{\reim}[1]{\textnormal{R}{#1}_!}
\newcommand{\roim}[1]{\textnormal{R}{#1}_\ast}
\newcommand{\ldetens}{\overset{\mathnormal{L}}{\underline{\boxtimes}}}
\newcommand{\br}{\bigr)}
\newcommand{\bl}{\bigl(}
\newcommand{\sC}{\mathscr{C}}
\newcommand{\ucat}{\mathbf{1}}
\newcommand{\ubtimes}{\underline{\boxtimes}}
\newcommand{\uLte}{\mathop{\underline{\otimes}}\limits^{\rm L}} 
\newcommand{\Lp}{\mathrm{L}p}
\newcommand{\pder}[3][]{\frac{\partial^{#1}#2}{\partial{#3}}}
\newcommand{\reg}{\mathrm{reg}}
\newcommand{\sing}{\mathrm{sing}}
\newcommand{\fExt}{\mathcal{E}xt}
\newcommand{\fTor}{\mathcal{T}or}
\newcommand{\fEnd}{\mathcal{E}nd}
\newcommand{\dL}{\mathrm{L}}
\newcommand{\fgd}{\mathrm{fgd}}
\newcommand{\Zl}{Z}
\newcommand{\subtageq}[1]{\stepcounter{equation} \tag*{$(\arabic{section}.\arabic{equation})_#1$}}
\newcommand{\wtmu}{\widetilde{\mu}}
\newcommand{\lexp}{\,^{l}\!}

\title{Holomorphic Frobenius actions for DQ-modules}
\author{Fran\c{c}ois Petit\footnote{The author has been fully supported in the frame of the OPEN scheme of the Fonds National de la Recherche (FNR) with the project QUANTMOD O13/570706}}
\date{}

\maketitle

\begin{abstract}
Given a complex manifold endowed with a $\C^\times$-action and a DQ-algebra equipped with a compatible holomorphic Frobenius action (F-action), we prove that if the $\C^\times$-action is free and proper, then the category of F-equivariant DQ-modules is equivalent to the category of modules over the sheaf of invariant sections of the DQ-algebra. As an application, we deduce the codimension three conjecture for formal microdifferential modules from the one for DQ-modules on a symplectic manifold.
\end{abstract}
\section{Introduction}
Relying on the notion of Frobenius action for Deformation Quantization modules (DQ-modules) introduced in \cite{KR}, we establish an equivalence between the category of coherent Frobenius equivariant DQ-modules over a DQ-algebra and the category of modules over the sheaf of invariant sections of this DQ-algebra. This result applied to the special case of the canonical DQ-algebra $\chW$ on the cotangent bundle  provides an equivalence between coherent $\mathrm{F}$-equivariant $\chW$-modules and coherent microdifferential modules on the projective cotangent bundle. This equivalence permits to deduce the codimension three conjecture for formal microdifferential modules \cite{KV} from the one for DQ-modules on a symplectic manifold \cite{codim}.

Deformation Quantization algebras (DQ-algebras) are non-commutative formal deformations of the structure sheaf of a complex variety. They are used to quantize complex Poisson varieties. In the symplectic case, they are often presented as an extension of the ring of microdifferential operators to arbitrary symplectic manifolds. The ring of formal microdifferential operators $\chE$, introduced in \cite{SKK}, is a sheaf on the cotangent bundle of a complex manifold that quantizes it as a \textit{homogeneous symplectic manifold}. DQ-algebras and in particular the canonical deformation quantization of the cotangent bundle $\chW$ ignore the homogeneous structure and quantize this bundle as a symplectic manifold. Using DQ-algebroid stacks, a generalization of the notion of DQ-algebra, this allows one to produce quantizations of arbitrary complex symplectic manifolds using $\chW$ (see \cite{PS}) and in some sense extends formal microdifferential modules to arbitrary symplectic manifolds (Note that it is always possible to quantize complex Poisson varieties as proved in \cite{Cal,Yek} building upon ideas of Kontsevich \cite{Kos}).

The ring $\chW$ is an $\chE$ algebra. Hence, it is natural to ask if it possible to identify those $\chW$-modules which are extension of $\chE$-modules. For that purpose, it is necessary to add an extra structure to encode the compatibility with the $\C^\times$-action on the fibers of the cotangent bundle. This can be achieved by using the notion of \textit{holomorphic Frobenius action}. It was introduced by Masaki Kashiwara and Rapha\"el Rouquier in their seminal work \cite{KR} which introduced an analogue of Beilinson-Bernstein's localization for rational Cherednik algebras. Objects originating from deformation quantization are defined over the ring of formal power series $\C[[\hbar]]$ or its localization with respects to $\hbar$ that is the field of formal Laurent series $\C((\hbar))$. This makes these objects too large for many applications since what is often required is an object satisfying certain finiteness assumptions over $\C$. To overcome this difficulty, they introduced, in \cite{KR}, the notion of $\chW$-algebra with a holomorphic Frobenius action or $\mathrm{F}$-action for short. Given a complex symplectic manifold $X$ endowed with an action of $\C^\times$  and quantized by a DQ-algebra, an $\mathrm{F}$-action is a compatible action of $\C^\times$ on  the DQ-algebra, acting on the deformation parameter $\hbar$ with a weight. This allows one to rescale $\chW$ and the $\chW$-modules with respect to $\hbar$. These actions have been subsequently used by several authors in problems arising from the study of the representation theory of quantized conic symplectic singularities, and in particular rational Cherednik algebras (see for instance \cite{fourcell,Bellhyptor,BLPW,Los1,Los2,mcgerty})

In this paper, we study the notion of DQ-modules endowed with an $\mathrm{F}$-action. The definition of an $\mathrm{F}$-action initially provided by Kashiwara and Rouqier is a punctual definition which makes it difficult to use for problems of global nature as questions of analytic extension (i.e. extending an $\mathrm{F}$-action through an analytic subset). Hence, we provide a reformulation in the spirit of $G$-linearization of coherent sheaves (see \cite[Ch.1 \S 3]{GIT}). Given a DQ-algebra $\cA_X$, on a Poisson manifold $X$, endowed with an $\mathrm{F}$-action, and assuming that this action is free and proper, we establish an equivalence between the category of coherent DQ-modules endowed with an $\mathrm{F}$-action and the category of modules over the sheaf of invariant sections on the quotient space $Y=X / \C^\times$ (Theorem \ref{thm:equimicroDQ}). Here we have to work on the quotient space since $\C^\times$ is not simply connected and $\mathrm{F}$-equivariant DQ-modules are constant along the orbits. Our result generalizes the first example of \cite[\S 2.3.3]{KR} (provided without a proof) which states an equivalence of categories between good $\chW$-modules and good micro-differential modules. We extend this example to DQ-modules over arbitrary Poisson manifolds and relax the finiteness conditions by only requiring the DQ-modules to be coherent. To obtain this equivalence of categories, we first prove that a locally finitely generated $\cA_X$-module endowed with an $\mathrm{F}$-action is locally finitely generated by locally invariant sections (Theorem \ref{thm:invariantgeneration}). This implies that if $\cM$ is coherent, it locally has an equivariant presentation of length one (Corollary \ref{cor:geninvglob}). We prove that the invariant sections functor and the equivariant extension functor form an adjoint pair (Proposition \ref{prop:adjunction}) and establish the coherence of the sheaf of invariant sections (Theorem \ref{thm:cohinv}). Then we can prove the equivalence announced earlier (Theorem \ref{thm:equimicroDQ}). As an example, we construct the weight one $\mathrm{F}$-action on the canonical deformation quantization $\chW$ of the cotangent bundle and obtain as a corollary of Theorem \ref{thm:equimicroDQ} an equivalence between coherent $\chW$-modules and coherent formal microdifferential modules on the projective cotangent bundle (see Proposition \ref{prop:exemple} for a precise statement). Finally, we use this result to deduce the codimension three conjecture for formal microdifferential modules initially proved  by Kashiwara and Vilonen (in the formal as well as in the analytic case) in \cite{KV} from its DQ-module version proved in \cite{codim}. For that purpose, we have to extend the $\mathrm{F}$-action through analytic subsets, which is one of the reason, we defined $\mathrm{F}$-actions in a non-punctual manner.
\vspace{0.3cm}

\noindent \textbf{Acknowledgements.}
I would like to express my gratitude to  Masaki Kashiwara and Pierre Schapira for their scientific insights. It is pleasure to thanks Gwyn Bellamy, Damien Calaque, Vincent Pecastaing, Mauro Porta, Marco Robalo, Yannick Voglaire for useful conversations.
\section{Preliminaries on DQ-modules}

We write $\C^\hbar$ for the ring of formal power series in $\hbar$ with complex coefficients and $\C^{\hbar,loc}$ for the field of formal Laurent series. Let $(X,\cO_X)$ be a complex manifold. We define the sheaf of $\C^\hbar$-algebras
\begin{equation*}
\cO_X^\hbar:=\varprojlim_{n \in \N} \cO_X \te_\C (\C^\hbar / \hbar^n \C^\hbar).
\end{equation*}

\begin{definition}
	A star-product denoted $\star$ on $\cO_X^\hbar$ is a $\C^\hbar$-bilinear associative multiplication law satisfying
	\begin{equation*}
	f \star g = \sum_{i \geq 0} P_i(f,g) \hbar^i \;\; \textnormal{for every} \;f, \; g \in \cO_X,
	\end{equation*}
	where the $P_i$ are holomorphic bi-differential operators such that for every $f, g \in \cO_X, P_0(f,g)=fg$ and 
	$P_i(1,f)=P_i(f,1)=0$ for $i>0$. The pair $(\cO_X^\hbar, \star)$ is called a star-algebra. 
\end{definition}

\begin{definition}
	A DQ-algebra $\cA_X$ on $X$ is a $\C_X^\hbar$-algebra locally isomorphic to a star-algebra as a $\C_X^\hbar$-algebra.
\end{definition}

\begin{notation}
	\begin{enumerate}[(i)]
		\item If $\cA_X$ is a DQ-algebra, we set $\cA_X^{loc}:=\C^{\hbar,loc} \te_{\C^\hbar} \cA_X$,
		
		\item  if $X$ and $Y$ are two complex manifolds endowed with DQ-algebras $\cA_X$ and $\cA_Y$ then $X \times Y$ is canonically equiped with a DQ-algebra $\cA_{X \times Y} := \cA_X \ubtimes \cA_Y$ (see \cite[\textsection 2.3]{KS3}). There is a canonical morphism of $\C^\hbar$-algebras
		\begin{equation*}
		p_2^\sharp \colon p^{-1}_2\cA_X \to \cA_X \boxtimes \cA_Y \to \cA_{X \times Y}
		\end{equation*}
		and this morphism is flat (\cite[lemma 2.3.2]{KS3}).
		\item We denote by $\Mod(\cA_X)$ the Grothendieck category of left $\cA_X$-modules, by $\Mod_\coh(\cA_X)$ its full abelian subcategory whose objects consist of coherent $\cA_X$-modules. We use similar notation for the left $\cA_X^{loc}$-modules.
	\end{enumerate}
\end{notation}

There is a unique isomorphism $\cA_X / \hbar \cA_X \stackrel{\sim}{\longrightarrow} \cO_X$ of $\C_X$-algebras. We denote by $\sigma_0: \cA_X \twoheadrightarrow \cO_X$ the epimorphism of $\C_X$-algebras defined as the following composition
\begin{equation*}
\cA_X \to \cA_X / \hbar \cA_X \stackrel{\sim}{\longrightarrow} \cO_X.
\end{equation*}
These data induce a Poisson bracket $\lbrace \cdot,\cdot \rbrace$ on $\cO_X$ defined by:
\begin{equation*}
\textnormal{for every $a, \; b \in \cA_X$}, \; \lbrace \sigma_0(a),\sigma_0(b) \rbrace=\sigma_0(\hbar^{-1}(ab-ba)).
\end{equation*}

\begin{lemma}
	Let $(\cO_X^\hbar, \star)$ be a star algebra and $v : \cO^\hbar_X \to \cO^\hbar_X$ be a $\C$-linear derivation of $(\cO_X^\hbar, \star)$ such that there exists $v_0 \in \End_{\C_X}(\cO_X)$ such that for every $u \in \cO_X^\hbar$, $\sigma_0 \circ v (u) = v_0 \circ \sigma_0 (u)$ and $v(\hbar)=m\hbar$. Then, $v_0$ is a $\C$-linear derivation of $\cO_X$ and there exists a unique sequence $(v_k)_{k \geq 1}$ of differential operators such that for any $f \in \cO_X$,%
	\begin{equation*}
	v(f)=\sum_{i \geq 0} \hbar^i v_i(f).
	\end{equation*}
	
	In particular, for every $u=\sum_i \hbar^i u_i \in \cO^\hbar_X$,
	\begin{align}\label{eq:basederivation}
	v(u)&=\sum_i \left(\sum_k \hbar^{i+k} v_k(u_i) + m \, i \, \hbar^i u_i \right)\\
	&=\sum_n \hbar^n \left( \sum_{i+k=n} v_k(u_i) + m \,n \, u_n \right).
	\end{align}
\end{lemma}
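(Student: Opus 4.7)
The plan is to proceed in three steps: (1) show $v_0$ is a derivation of $\cO_X$ and hence a differential operator, (2) define the higher $v_k$ and verify by induction on $k$ that each is a differential operator, and (3) derive the displayed formula from $\hbar$-adic continuity and the Leibniz rule.

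For (1), I apply $\sigma_0$ to the identity $v(f \star g) = v(f) \star g + f \star v(g)$ for $f, g \in \cO_X$. Using $\sigma_0 \circ v = v_0 \circ \sigma_0$, that $\sigma_0$ is a ring morphism, and $\sigma_0(f \star g) = P_0(f,g) = fg$, this immediately yields $v_0(fg) = v_0(f) g + f v_0(g)$, so $v_0$ is a $\C$-linear derivation of $\cO_X$, i.e., a holomorphic vector field. For (2), for any local section $f \in \cO_X$ the image $v(f) \in \cO_X^\hbar$ admits a unique expansion $v(f) = \sum_{k \geq 0} \hbar^k v_k(f)$ with $v_k(f) \in \cO_X$, thereby defining $\C$-linear sheaf morphisms $v_k \colon \cO_X \to \cO_X$; uniqueness of the sequence is automatic.

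To show by induction on $k$ that $v_k$ is a differential operator, I first note that $v(\hbar) = m\hbar$ together with Leibniz yields $v(\hbar^N u) = \hbar^N(Nm\, u + v(u))$, so $v$ preserves the filtration $\hbar^N \cO_X^\hbar$ and is $\hbar$-adically continuous, and may be applied term-by-term to $f \star g = \sum_j \hbar^j P_j(f, g)$. Equating the coefficient of $\hbar^n$ in $v(f \star g) = v(f) \star g + f \star v(g)$ and separating out the $k = n$ contributions (coming from $P_0$) yields, for every $n \geq 1$, an identity
\[
v_n(fg) - v_n(f) g - f v_n(g) = B_n(f, g),
\]
where $B_n$ is built explicitly from $P_0, \ldots, P_n$ and $v_0, \ldots, v_{n-1}$. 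Under the inductive hypothesis the bidifferentiality of the $P_j$ and the differential character of $v_0, \ldots, v_{n-1}$ make $B_n$ a bidifferential operator. Rewriting the identity as $[v_n, \mu_f] = \mu_{v_n(f)} + B_n(f, \cdot)$, where $\mu_h$ denotes multiplication by $h$, the right-hand side is a differential operator whose order is bounded uniformly in $f$ by the second bidegree of $B_n$; the iterated commutator characterization of differential operators then forces $v_n$ itself to be a differential operator.

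For (3), given $u = \sum_i \hbar^i u_i \in \cO_X^\hbar$, $\hbar$-adic continuity yields $v(u) = \sum_i v(\hbar^i u_i)$; the Leibniz rule combined with $v(\hbar^i) = m i \, \hbar^i$ (using centrality of $\hbar$) gives $v(\hbar^i u_i) = m i\,\hbar^i u_i + \hbar^i v(u_i)$; substituting $v(u_i) = \sum_k \hbar^k v_k(u_i)$ and collecting by total $\hbar$-degree produces the announced formula. The main obstacle is the inductive step in (2): the defect-as-derivation relation with $B_n$ bidifferential does not by itself force $v_n$ to be a differential operator, and one must invoke the iterated commutator criterion (verified locally on $X$) to close the induction.
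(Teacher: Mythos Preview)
Your proposal is correct and follows essentially the same route as the paper: expand $v(f\star g)=v(f)\star g+f\star v(g)$ in powers of $\hbar$, read off the $n=0$ term to get that $v_0$ is a derivation, and then induct on $n$, using that the defect $v_n(fg)-v_n(f)g-fv_n(g)$ is bidifferential by the induction hypothesis together with the commutator criterion for differential operators (the paper cites \cite[Lemma~2.2.4]{KS3} at exactly this point). Your derivation of $\hbar$-adic continuity from $v(\hbar)=m\hbar$ and Leibniz is a welcome explicit justification of what the paper simply asserts ``by assumption''.
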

\begin{proof} This proof is an adaptation of the proof of \cite[Proposition 2.2.3]{KS3}. It is clear that there exists a sequence $(v_k)_{k \geq 0}$  of endomorphism of $\cO_X$ such that, for every $f \in \cO_X$
	\begin{equation*}
	v(f)=\sum_i \hbar^i v_i(f). 
	\end{equation*}
	
	Let $(P_n)_{n \in \N}$ be the sequence of bidifferential operators associated with the star products $\star$. By assumption $v$ is continuous for the $\hbar$-adic topology, thus for every $f$, $g \in \cO_X$,
	\begin{equation*}
	v(f \star g)=\sum_{j \geq 0}  v(\hbar^j P_j(f,g)) =\sum_{n \geq 0} \hbar^n \left( \sum_{i+j=n} v_i(P_j(f,g))+mn \, P_n(f,g) \right)
	\end{equation*}
	and
	\begin{equation*}
	f \star v(g) +v(f) \star g=\sum_{n\geq 0} \hbar^n \sum_{j+k=n} \left( P_k(f,v_j(g)) + P_k(v_j(f),g) \right).
	\end{equation*}
	Since $v(f \star g)=f \star v(g) + v(f) \star g$, we obtain 
	\begin{equation}\label{form:recdiffop}
	\sum_{i+j=n} v_i(P_j(f,g))+mn \, P_n(f,g)=\sum_{j+k=n} \left( P_k(f,v_j(g)) + P_k(v_i(f),g) \right).
	\end{equation}
	For $n=0$, the above formula implies that
	\begin{equation*}
	v_0(fg)=fv_0(g)+v_0(f)g.
	\end{equation*}
	Hence, $v_0$ is a $\C$-linear derivation of $\cO_X$ and in particular is a differential operator.
	
	We will prove by induction that the $v_k$ are differential operators. We just checked that this is true for $k=0$. Assume that this is true for $k<l$ with $l \in \N$. Using the induction hypothesis, we deduce from the expression \eqref{form:recdiffop} that
	\begin{equation*}
	v_l(fg)+Q_l(f,g)= f v_l(g)+v_l(f)g+R_l(f,g)
	\end{equation*}
	where $Q_l$ and $R_l$ are bidifferential operators. This implies that
	\begin{equation*}
	[v_l,g](f)=v_l(fg)-g\,v_l(f)=f \, v_l(g)-Q_l(f,g)+R_l(f,g).
	\end{equation*}
	Since $Q_l(\cdot,g)$ and $R_l(\cdot,g)$ are differential operators, it follows from \cite[Lemma 2.2.4]{KS3} that $v_l$ is a differential operator.
\end{proof}

\subsection{The canonical deformation quantization of the cotangent bundle}

Let $M$ be a complex manifold. The cotangent bundle of $M$, $X:=T^\ast M$ is equipped with the sheaf  $\chE_X$ of formal microdifferential operators. This is a filtered, conic sheaf of $\C$-algebras. We denote by  $\chE_X(0)$ the subsheaf of $\chE_X$ formed by the operators of order $m \leq 0$. These sheaves were introduced in \cite{SKK}. The reader can consult \cite{Sch} for an introduction to the theory of microdifferential modules.

On $X$, there is a DQ-algebra $\chW_X(0)$ which was constructed in \cite{PS}. Here, we review their construction. 

Let $\C$ be the complex line endowed with the coordinate $t$ and denote by $(t;\tau)$ the associated symplectic coordinate on $T^\ast \C$. We set
\begin{equation*}
\chE_{T^\ast ( M \times \C), \hat{t}}(0)=\lbrace P \in \chE_{T^\ast  M}; [P, \partial_t]=0 \rbrace.
\end{equation*}
We consider the following open subset of $T^\ast (M \times \C)$
\begin{equation*}
T_{\tau \neq 0}^\ast(M\times \C)=\lbrace (x,t;\xi,\tau) \in T^\ast (M \times \C) | \tau \neq 0 \rbrace
\end{equation*}
and the morphism
\begin{equation*}
\rho \colon T_{\tau \neq 0}^\ast(M\times \C) \to T^\ast M, \; (x,t;\xi,\tau) \mapsto (x;\xi/ \tau). 
\end{equation*}
We obtain the $\C_X^\hbar$-algebra
\begin{equation}\label{def:WO}
\chW_X(0) \colon=\rho_\ast (\chE_{T^\ast ( M \times \C), \hat{t}}(0)|_{ T_{\tau \neq 0}^\ast(M\times \C)})
\end{equation}
where $\hbar$ acts as $\tau^{-1}$.
A section $P$ of $\chW_X(0)$ can be written in a local symplectic coordinate system $(x_1,\ldots,x_n,u_1,\ldots,u_n)$ as
\begin{equation*}
P=\sum_{j \leq 0} f_{-j}(x,u_i) \tau^j, \, f_{-j} \in \cO_X,\; j \in \Z.
\end{equation*}
Setting $\hbar=\tau^{-1}$, we obtain
\begin{equation*}
P=\sum_{k \geq 0} f_k(x,u_i) \hbar^k, \, f_k \in \cO_X,\; k \in \N.
\end{equation*}

We write $\chW_X$ for the localization of $\chW_X(0)$ with respect to the parameter $\hbar$. There is the following commutative diagram of morphisms of algebras.
\begin{equation*}
\xymatrix{
	\chE_X \ar@{^{(}->}[r]^-{\iota} & \chW_X\\
	\chE_X(0) \ar@{^{(}->}[u] \ar@{^{(}->}[r] & \chW_X(0) \ar@{^{(}->}[u]
}
\end{equation*}
where the algebra map $\iota:\chE_X \to  \chW_X$ is given in a local symplectic coordinate system $(x_1,\ldots,x_n,u_1,\ldots,u_n)$ by $x_i \mapsto x_i$, $\partial_{x_i} \mapsto  \hbar^{-1} u_i$.

\section{Sections depending on a complex parameter}
Let $X$ be a complex manifold endowed with a DQ-algebra $\cA_X$.
We consider the DQ-algebra $\cA_{\C \times X}= \cO_\C^\hbar \underline{\boxtimes} \cA_X $ on $\C \times X$. We denote by $t$ the coordinate on $\C$, by $p_2 \colon \C \times X \to X$ the projection on $X$ and by $p_1$ the projection on $\C$. Note that $\cA_{\C \times X}$ is a left $\cD_\C$-modules and in particular a left $\cO_\C$-module.
Let $t \in \C$, denote by $\mathfrak{m}_t$ the maximal ideal of $\cO_{\C,t}$ and consider the morphism
\begin{equation*}
i_t \colon X \to \C \times X, \; x \mapsto (t,x).
\end{equation*} 
Then, we have an evaluation morphism
\begin{align*}
ev_t\colon i_t^{-1}\cA_{\C \times X} &\to i_t^{-1}\cA_{\C \times X}/ \mathfrak{m}_t(i_t^{-1}\cA_{\C \times X}) \simeq \cA_X\\
u &\mapsto u(t).
\end{align*}
and
\begin{align*}
ev_t\colon i_t^{-1}\cA^{loc}_{\C \times X} & \to i_t^{-1}\cA^{loc}_{\C \times X}/ \mathfrak{m}_t (i_t^{-1}\cA^{loc}_{\C \times X})\simeq \cA^{loc}_X\\
u &\mapsto u(t).
\end{align*}

\begin{notation}
	\begin{enumerate}[(i)]
		\item Let $(f,f^\sharp)\colon (X,\cR_X) \to (Y,\cR_Y)$ be a morphism of ringed spaces. As usual, we denote by $f^\ast$ the functor
		\begin{equation*}
		f^\ast \colon\Mod(\cR_Y) \to \Mod(\cR_X), \; \cM \mapsto f^\ast \cM \colon \!\!\!= \cR_X \te_{f^{-1}\cR_Y} f^{-1}\cM. 
		\end{equation*}
		
		\item In order to keep the number of notations to a bearable level, we will write indistinctly $p_2^\ast \cM$ for $\cA_{\C \times X} \te_{p_2^{-1}\cA_X} p_2^{-1} \cM$ and for $\cA^{loc}_{\C \times X} \te_{p_2^{-1}\cA^{loc}_X} p_2^{-1} \cM$ depending of whether $\cM$ is considered as an $\cA_X$-module or an $\cA_X^{loc}$-module. 
	\end{enumerate}
\end{notation}

\begin{definition}
	Let $\cM$ be an $\cA_X$-module (resp. an $\cA_X^{loc}$-module) and set $\cN=p_2^\ast \cM$ and consider $s\in \cN$. The module $\cN$ is a $\cD_\C$-module. The derivative with respect to $t$ of a section $s$ in $\cN$ is the section $\partial_t s$. It is denoted $s^\prime$ and called the derivative of $s$.
\end{definition}

\begin{definition}
	Let $U$ be an open subset of $\C$ and let $\cM$ be a coherent $\cA_X$-module (resp. $\cA_X^{loc}$-module). Let $(s(t))_{t \in U}$ be a family of section of $\cM$. We say that  $(s(t))_{t \in U}$ depends holomorphically on $t$, if locally there exists a section $s \in p_2^\ast \cM$ such that $ev_t(s)=s(t)$.
\end{definition}

\begin{proposition}\label{prop:annulationcom}
	Let $X$ be a complex manifold and $\cF$ be a coherent $\cO_X$-module on $X$, $U$ an open subset of $\C \times X$ and $u \in p_2^\ast \cF(U)$ such that for every $t \in p_1(U), \; u(t)=0$. Then $u=0$.
\end{proposition}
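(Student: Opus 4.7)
The plan is to argue stalkwise: for each $(t_0,x_0) \in U$ I will show that the germ $u_{(t_0,x_0)}$ vanishes in $(p_2^\ast\cF)_{(t_0,x_0)}$. Let $R = \cO_{X,x_0}$, $S = \cO_{\C \times X,(t_0,x_0)}$; via $p_2^\sharp$, $S$ is a flat $R$-algebra.

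First I use coherence of $\cF$: in a neighborhood of $x_0$ one has a presentation $\cO_X^a \to \cO_X^b \to \cF \to 0$, whose kernel $\cK \subset \cO_X^b$ is again coherent. Flatness of $p_2$ identifies $(p_2^\ast\cF)_{(t_0,x_0)}$ with $S^b/(S \te_R \cK_{x_0})$. Lift $u$ near $(t_0,x_0)$ to a holomorphic representative $\widetilde u \in \cO_{\C \times X}^b$; the hypothesis $u(t)=0$ then reads $\widetilde u(t) \in \cK_{x_0}$ for every $t$ sufficiently close to $t_0$.

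Next I expand $\widetilde u$ as a convergent Taylor series in $t - t_0$,
\begin{equation*}
\widetilde u = \sum_{n \geq 0}(t-t_0)^n \widetilde u_n, \qquad \widetilde u_n \in R^b,
\end{equation*}
and aim to show every $\widetilde u_n$ lies in $\cK_{x_0}$. For each $N \in \N$, the quotient $V_N := \cF_{x_0}/\mathfrak{m}_{x_0}^N \cF_{x_0}$ is a finite-dimensional $\C$-vector space, since $\cF_{x_0}$ is finitely generated over $R$ and $R/\mathfrak{m}_{x_0}^N$ is Artinian. Projecting the hypothesis to $V_N$ produces, for $t$ near $t_0$, a convergent power series with values in a finite-dimensional $\C$-vector space that vanishes identically; its coefficients must all vanish, giving $\widetilde u_n \in \cK_{x_0} + \mathfrak{m}_{x_0}^N R^b$ for every $N$. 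Krull's intersection theorem applied to the finitely generated module $\cF_{x_0}$ over the Noetherian local ring $R$ then yields $\widetilde u_n \in \cK_{x_0}$ for all $n$.

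Finally, I turn this into the vanishing of $u$ itself. For every $N$, the partial sum $\sum_{n<N}(t-t_0)^n \widetilde u_n$ lies in $S \te_R \cK_{x_0}$, while the Taylor remainder lies in $(t-t_0)^N S^b$, so the image $u$ lies in $(t-t_0)^N(S \te_R \cF_{x_0})$ for every $N$. A second application of Krull's intersection theorem --- to the finitely generated $S$-module $S \te_R \cF_{x_0}$ and the principal ideal $(t-t_0)$, which is contained in the maximal ideal of the local Noetherian ring $S$ --- forces $u=0$. The main obstacle, as I see it, is the passage from the pointwise condition $\widetilde u(t) \in \cK_{x_0}$ to the algebraic statement $\widetilde u_n \in \cK_{x_0}$: $\cK_{x_0}$ is only an $R$-submodule and not closed under infinite $\C$-linear combinations, which is why one must pass through the finite-dimensional quotients $V_N$ and then reassemble the successive approximations via Krull.
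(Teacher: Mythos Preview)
Your proof is correct and follows essentially the same strategy as the paper: work stalkwise, pass to the finite-dimensional quotients $\cF_{x_0}/\mathfrak{m}_{x_0}^N\cF_{x_0}$, use that a holomorphic function of $t$ with values in a finite-dimensional vector space which vanishes on an open set is identically zero, and conclude via Krull's intersection theorem. The paper organizes this slightly more economically by working directly with the ideal $\mathfrak{r}\subset\cO_{\C\times X,(t_0,x_0)}$ generated by $\mathfrak{m}_{x_0}$ (so that $(p_2^\ast\cF)_{(t_0,x_0)}/\mathfrak{r}^q\simeq\cO_{\C,t_0}\otimes_\C\cF_{x_0}/\mathfrak{m}_{x_0}^q\cF_{x_0}$), which lets it avoid the explicit Taylor expansion in $t$ and get by with a single application of Krull where you use two.
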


\begin{proof}
	This question is local. So, we can assume that we are working in the vicinity of a point $(t_0,x) \in \C \times X$. We identify the local ring $(\cO_{X,x},\mathfrak{m}_x)$ with a subring of the local ring $(\cO_{\C \times X, (t_0,x)}, \mathfrak{m}_{(t_0,x)})$ via the morphism of locally ringed spaces induced by the projection $p_2 \colon \C \times X \to X$ . We denote by $\mathfrak{r}_{(t_0,x)}$ the ideal of $\cO_{\C \times X, (t_0,x)}$ generated by $\mathfrak{m}_x$. For every $q \in \N$, we have
	\begin{align*}
	(p_2^\ast \cF)_{(t_0,x)} / \mathfrak{r}_{(t_0,x)}^q (p_2^\ast \cF)_{(t_0,x)} \simeq \cO_{\C,t_0} \otimes \cF_x / \mathfrak{m}^q_x \cF_x.
	\end{align*}
	Writing $u_{t_0}(x)$ for the image of $u$ in $(p_2^\ast \cF)_{(t_0,x)} / \mathfrak{r}_{(t_0,x)}^q (p_2^\ast \cF)_{(t_0,x)}$ and choosing an isomorphism $\cF_x / \mathfrak{m}^q_x \cF_x \simeq \C^r$, we can identify $u_{t_0}(x)$ with a vector $(f_1,\ldots,f_r)$ where the $f_i \in \cO_{\C,t_0}$. It follows from the assumption that there exists a neighborhood $V$ of $t_0$ such that for every $t \in V, \; f_i(t)=0$. This implies that $u_{t_0}(x)=0$ that is $u_{(t_0,x)} \in \mathfrak{r}_{(x,t_0)}^q (p_2^\ast \cF)_{(x,t_0)}$. As $\mathfrak{r}_{(x,t_0)} \subset \mathfrak{m}_{(x,t_0)}$ and $(p_2^\ast \cF)_{(t_0,x)}$ is a finitely generated $\cO_{\C \times X, (t_0,x)}$-module, it follows from the Krull intersection lemma that $u_{(t_0,x)}=0$.  
\end{proof}

\begin{proposition}\label{prop:annulation} 
	Let $\cM$ be a coherent $\cA_X$-module, set $\cN=p_2^\ast \cM$ , $U$ an open subset of $\C \times X$ and let $ u \in \cN(U)$ such that for every $t \in p_1(U)$, $u(t)=0$. Then $u=0$.
\end{proposition}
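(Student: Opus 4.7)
The strategy is to reduce Proposition \ref{prop:annulation} to its $\cO_X$-linear counterpart, Proposition \ref{prop:annulationcom}, by inducting along the $\hbar$-adic filtration of $\cN$. Because $p_2^\sharp$ is flat, $\cN = p_2^\ast \cM$ is a coherent $\cA_{\C \times X}$-module, and its stalks are therefore $\hbar$-adically separated. It hence suffices to show that for every $n \geq 1$ the image $\bar u_n$ of $u$ in $\cN/\hbar^n\cN$ vanishes on $U$, since then at each stalk $u_{(t,x)}$ will lie in $\bigcap_n \hbar^n \cN_{(t,x)} = 0$.

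I will argue by induction on $n$. For the base case $n=1$, flatness of $p_2^\ast$ applied to $0 \to \hbar\cM \to \cM \to \cM/\hbar\cM \to 0$ yields the canonical identification
\[
\cN/\hbar\cN \;\simeq\; p_2^\ast(\cM/\hbar\cM),
\]
and $\cM/\hbar\cM$ is a coherent $\cO_X$-module (being a coherent $\cA_X$-module killed by $\hbar$, hence coherent over $\cO_X = \cA_X/\hbar\cA_X$). Since the evaluation map is compatible with the quotient by $\hbar$, one still has $\bar u_1(t) = 0$ for every $t \in p_1(U)$, so Proposition \ref{prop:annulationcom} forces $\bar u_1 = 0$. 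For the inductive step, assume $\bar u_{n-1}=0$ on $U$; by left-exactness of $\Gamma(U,-)$ this means $u$ is a section of $\hbar^{n-1}\cN$, so $\bar u_n$ is a section of the subsheaf $\hbar^{n-1}\cN/\hbar^n\cN \hookrightarrow \cN/\hbar^n\cN$. Flatness again identifies this subsheaf with $p_2^\ast(\hbar^{n-1}\cM/\hbar^n\cM)$, and the latter is the pullback of a coherent $\cO_X$-module (coherence of $\cA_X$ makes $\hbar^{n-1}\cM$ and then the quotient $\hbar^{n-1}\cM/\hbar^n\cM$ coherent $\cA_X$-modules annihilated by $\hbar$, therefore coherent over $\cO_X$). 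Since evaluation at $t$ is compatible both with the projection $\cN \to \cN/\hbar^n\cN$ and with the inclusion of the subsheaf, $\bar u_n(t)=0$ for every $t \in p_1(U)$, and a second application of Proposition \ref{prop:annulationcom} gives $\bar u_n=0$.

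The main point to pin down carefully will be the flat-pullback identifications $\hbar^{k}\cN/\hbar^{k+1}\cN \simeq p_2^\ast(\hbar^{k}\cM/\hbar^{k+1}\cM)$: they rest on flatness of $p_2^\sharp$ together with coherence of $\cA_X$, and must be set up so that the evaluations $u \mapsto u(t)$ commute with them. Once these identifications are in place, the final ingredient — $\hbar$-adic separation of the stalks of a coherent $\cA_{\C\times X}$-module, which plays the role of the Krull intersection lemma used in Proposition \ref{prop:annulationcom} — is the standard consequence of the local finite presentability of $\cN$ over the $\hbar$-adically complete ring $\cA_{\C\times X}$.
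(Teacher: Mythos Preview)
Your proposal is correct and follows essentially the same route as the paper: both argue by induction along the $\hbar$-adic filtration, use the flatness of $p_2^\sharp$ to obtain $\hbar^k\cN/\hbar^{k+1}\cN \simeq p_2^\ast(\hbar^k\cM/\hbar^{k+1}\cM)$ as pullbacks of coherent $\cO_X$-modules, apply Proposition~\ref{prop:annulationcom} at each step, and conclude via $\hbar$-adic separation of coherent $\cA_{\C\times X}$-modules (for which the paper simply cites \cite[Corollary 1.2.8]{KS3}). Your write-up is in fact a bit more explicit than the paper's about why the flat-pullback identifications and the coherence of the successive quotients hold.
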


\begin{proof}
	The $\cO_{\C \times X}$-modules $\hbar^n \cN/ \hbar^{n+1} \cN$ are coherent and
	\begin{equation}\label{eq:reducmodulon}
	\hbar^n\cN / \hbar^{n+1}\cN \simeq p_2^\ast (\hbar^n\cM/ \hbar^{n+1}\cM).
	\end{equation}
	Let $u_0$ be the image of $u$ via the map $\cN \to \cN/ \hbar \cN$. It follows from the assumptions that for every $t \in p_1(U)$, $u_0(t)=0$ and from the isomorphism \eqref{eq:reducmodulon} that $u_0 \in p_2^\ast (\cM/ \hbar\cM)$. Then by  Proposition \ref{prop:annulationcom}, $u_0=0$. That is $u \in \hbar \cN$. 
	
	Let us show by recursion that  $u \in \bigcap_{n \geq 0} \hbar^n \cN$. We just proved that $u \in \hbar \cN$. Assume that $u \in \hbar^n\cN$ and denote by $u_n$ the image of $u$ via the map $\cN \to \hbar^n \cN/ \hbar^{n+1} \cN$. By the isomorphism \eqref{eq:reducmodulon} we identify $u_n$ with a section of the coherent $\cO_{\C \times X}$-module $p_2^\ast (\hbar^n\cM/ \hbar^{n+1}\cM)$ such that for every $t$, $u_n(t)=0$. Thus by Proposition \ref{prop:annulationcom}, $u_n=0$ that is $u \in \hbar^{n+1} \cN$. It follows that $u \in \bigcap_{n \geq 0} \hbar^n \cN$ and $\bigcap_{n \geq 0} \hbar^n \cN=(0)$ by \cite[Corollary 1.2.8]{KS3} which proves the claim.
\end{proof}

\begin{corollary}\label{cor:annulation}
	Let $\cM$ be a coherent $\cA^{loc}_X$-module and let $\cN=p_2^\ast \cM$. Let $U$ be an open subset of $\C \times X$ and $ u \in \cN(U)$ such that for every $t \in p_1(U)$, $u(t)=0$. Then $u=0$.
\end{corollary}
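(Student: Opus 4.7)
The plan is to reduce this to Proposition~\ref{prop:annulation} by choosing, locally on $X$, a coherent $\cA_X$-submodule $\cM_0 \subset \cM$ with $\cM_0^{loc} \simeq \cM$ and multiplying $u$ by a suitable power of $\hbar$ so that it lies in the corresponding subsheaf of $\cN$. The question being local, fix $(t_0,x_0) \in U$ and work in a small neighborhood. Over a small enough neighborhood $W$ of $x_0$, choose finitely many sections $e_1,\dots,e_n \in \cM(W)$ generating $\cM|_W$ as an $\cA_W^{loc}$-module, and set $\cM_0 := \sum_i \cA_W \cdot e_i \subset \cM|_W$. Since $\hbar$ is invertible on $\cM$, the submodule $\cM_0$ is automatically $\hbar$-torsion-free; it is coherent by the noetherianity of the stalks of $\cA_X$, and by construction $\cM_0^{loc} \simeq \cM|_W$.

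Next, since $p_2$ is smooth the functor $p_2^\ast$ is exact, so $p_2^\ast \cM_0$ is a coherent, $\hbar$-torsion-free $\cA_{\C \times W}$-module; moreover $\hbar$-localization commutes with $p_2^\ast$, and we obtain $(p_2^\ast \cM_0)^{loc} \simeq p_2^\ast \cM = \cN$ together with an injection $p_2^\ast \cM_0 \hookrightarrow \cN$. After shrinking $U$ around $(t_0,x_0)$ if necessary, there exists $k \in \N$ such that $v := \hbar^k u \in (p_2^\ast \cM_0)(U)$, viewed as a subsheaf of $\cN(U)$.

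Finally, the vanishing hypothesis descends to $v$: for every $(t,y) \in U$ one has $v(t) = \hbar^k u(t) = 0$ in $\cN_{(t,y)}$, whose image under $ev_t$ is a section of $\cM_y$. Since $ev_t$ is compatible with the inclusions $p_2^\ast \cM_0 \hookrightarrow \cN$ and $\cM_{0,y} \hookrightarrow \cM_y$, this forces $v(t) = 0$ in $\cM_0$ at every point of $p_1(U)$. Proposition~\ref{prop:annulation} applied to the coherent $\cA_X$-module $\cM_0$ and the section $v$ then yields $v = 0$, hence $\hbar^k u = 0$ in $\cN$; since $\cN$ is an $\cA^{loc}_{\C \times X}$-module, $\hbar$ acts invertibly and we conclude $u = 0$ on a neighborhood of $(t_0,x_0)$. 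As $(t_0,x_0) \in U$ was arbitrary, $u = 0$ on $U$. The one step requiring care is the first one, namely producing the local $\hbar$-torsion-free coherent $\cA_X$-lattice $\cM_0$ and checking that $p_2^\ast$ preserves these properties while commuting with $\hbar$-localization; once this is secured, the rest is a routine bookkeeping reduction to the already-proven $\cA_X$-module case.
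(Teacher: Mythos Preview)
Your proof is correct and follows essentially the same route as the paper's: localize near a point, choose a coherent $\cA_X$-lattice $\cM_0\subset\cM$ generated by finitely many local generators of $\cM$, multiply $u$ by a sufficiently high power of $\hbar$ so that it lands in $p_2^\ast\cM_0$, apply Proposition~\ref{prop:annulation}, and conclude by invertibility of $\hbar$. The only cosmetic difference is that the paper justifies coherence of $\cM_0$ by noting it is a finitely generated $\cA_W$-submodule of the coherent $\cA_W^{loc}$-module $\cM$, whereas you invoke noetherianity of the stalks; both are valid.
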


\begin{proof}
	Let $(t,x) \in U$. There exist an open neighborhood $V \times W \subset U$ of $(t,x)$ and finitely many $u_i \in \cM|_W$ such that $\cM|_W=\sum_i \cA^{loc}_W u_i$. We consider the $\cA_W$-module $\cM^\prime=\sum_i \cA_W u_i$. It is a finitely generated $\cA_W$-submodule of the coherent $\cA_W^{loc}$-module $\cM$. Thus, $\cM^\prime$ is coherent. Shrinking $V \times W$ if necessary and multiplying $u$ by $\hbar^n$ with $n \in \N$ sufficiently big, we can assume that $\hbar^n u \in \cA_{V \times W} \otimes_{\cA_W} \cM^\prime$. The section $\hbar^n u$ satisfies the hypothesis of the Proposition \ref{prop:annulation}. It follows that $\hbar^n u=0$. But, the action of $\hbar$ on $\cN$ is invertible. It follows that $u=0$.   
\end{proof}

\begin{corollary}\label{cor:loccons}
	Let $\cM$ be a coherent $\cA_X$-module (resp. $\cA^{loc}_X$-module) and set $\cN= p_2^{\ast}\cM$. Let  $U$ an open subset of $\C \times X$ and $ u \in \cN(U)$ such that for every $t \in p_1(U)$, $u^\prime(t)=0$. Then $u \in p_2^{-1}\cM(U)$.
\end{corollary}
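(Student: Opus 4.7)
The proof has two parts. First I would upgrade the pointwise condition $u^\prime(t)=0$ to the sheaf-level identity $u^\prime = 0 \in \cN(U)$. Since $u^\prime = \partial_t u$ is itself a section of $\cN$ on $U$, the hypothesis says that the section $u^\prime$ has vanishing evaluation at every $t \in p_1(U)$. Applying Proposition \ref{prop:annulation} when $\cM$ is a coherent $\cA_X$-module, or Corollary \ref{cor:annulation} when $\cM$ is a coherent $\cA_X^{loc}$-module, to $u^\prime$ immediately yields $u^\prime = 0$.

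The second, substantive, step is to identify the subsheaf $\ker(\partial_t\colon\cN\to\cN)$ of $\cN$ with $p_2^{-1}\cM$, so that $u\in p_2^{-1}\cM(U)$ follows. The map $p_2^{-1}\cM \to \cN$, $s \mapsto 1\otimes s$, is visibly contained in $\ker\partial_t$. For the converse the key local claim is that on $\C \times X$ the sequence of $p_2^{-1}\cA_X$-modules
\begin{equation*}
0 \to p_2^{-1}\cA_X \to \cA_{\C\times X} \xrightarrow{\partial_t} \cA_{\C\times X} \to 0
\end{equation*}
is locally exact. Given this, tensoring over $p_2^{-1}\cA_X$ with $p_2^{-1}\cM$ and using that the canonical morphism $p_2^\sharp \colon p_2^{-1}\cA_X \to \cA_{\C\times X}$ is flat produces a locally exact short exact sequence
\begin{equation*}
0 \to p_2^{-1}\cM \to \cN \xrightarrow{\partial_t} \cN \to 0,
\end{equation*}
proving both that $p_2^{-1}\cM$ embeds into $\cN$ and that $\ker\partial_t = p_2^{-1}\cM$ as subsheaves.

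The main obstacle is the local exactness of the first displayed sequence. Using that $\cA_{\C\times X}$ is locally isomorphic to the external product $\cO_\C^\hbar \,\ubtimes\, \cA_X$ as a $\cD_\C$-module, the identification $\ker(\partial_t\colon\cO_\C^\hbar\to\cO_\C^\hbar)=\C^\hbar$ on a disk yields the kernel on the left. The surjectivity of $\partial_t$ is obtained inductively along the $\hbar$-adic filtration: at order zero $\cA_{\C\times X}/\hbar\cA_{\C\times X}\simeq\cO_{\C\times X}$ and $\partial_t$ is surjective by the classical existence of a holomorphic primitive on a polydisk; the jumps $\hbar^n\cA_{\C\times X}/\hbar^{n+1}\cA_{\C\times X}\simeq\cO_{\C\times X}$ let one lift step by step, and a Mittag-Leffler argument produces a global primitive in the $\hbar$-adic limit. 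The subtle point is to verify that $\partial_t$ on the DQ-algebra is compatible with the $\hbar$-adic filtration despite the non-trivial star product, which one checks via a derivation analysis in the spirit of the lemma on derivations of star algebras proved in the preliminaries.
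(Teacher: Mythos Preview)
Your proof is correct and shares the paper's two-step structure: first upgrade the pointwise hypothesis to $u'=0$ via Proposition~\ref{prop:annulation} (resp.\ Corollary~\ref{cor:annulation}), then work locally. The second step is organized differently. The paper picks a local presentation $0\to\cI\to\cA_X^m\to\cM\to 0$, lifts $u$ to a section $s\in\cA_{\C\times X}^m$, writes $s'=\sum_j b_j v_j$ with $v_j\in\cI$, chooses primitives $c_j$ of the $b_j$ in $\cA_{\C\times X}$, and observes that $s-\sum_j c_j v_j$ is killed by $\partial_t$ in $\cA_{\C\times X}^m$ (hence lies in $p_2^{-1}\cA_X^m$) while having the same image as $s$ in $\cN$. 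Your argument packages exactly the same ingredients---surjectivity of $\partial_t$ on $\cA_{\C\times X}$, the identification $\ker\partial_t=p_2^{-1}\cA_X$, and flatness of $p_2^\sharp$---into the single observation that the short exact sequence $0\to p_2^{-1}\cA_X\to\cA_{\C\times X}\xrightarrow{\partial_t}\cA_{\C\times X}\to 0$ stays exact after applying $(-)\otimes_{p_2^{-1}\cA_X}p_2^{-1}\cM$, because the rightmost term is flat over $p_2^{-1}\cA_X$ and hence the relevant $\Tor_1$ vanishes. This is a tidier formulation of the same idea; the paper's version has the minor virtue of making the primitive-taking step visible. One remark: your closing worry about compatibility of $\partial_t$ with the $\hbar$-adic filtration is unnecessary, since $\partial_t$ is $\C^\hbar$-linear and therefore preserves each $\hbar^n\cA_{\C\times X}$ automatically.
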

\begin{proof}
	Since $\cM$ is coherent, locally it has a presentation
	\begin{equation*}
	0 \to \cI \to \cA_X^m \to \cM \to 0.
	\end{equation*}
	Since $\cA_{\C \times X}$ is flat over $p_2^{-1}\cA_X$, the module $\cN$ has the following presentation
	\begin{equation}
	0 \to \cA_{\C \times X}\cI \to \cA_{\C \times X}^m \to \cN \to 0.
	\end{equation}
	Let $(t_0,x_0) \in \C \times X$. There exist an open neighborhood $V$ of $(t_0,x_0)$ and a section $s=\sum_{i=1}^n a_i e_i \in \cA_{\C \times X}^m|_V$ such that its image in $\cN$ is $u$. 
	
	By hypothesis $u^\prime(t)=0$, it follows from Proposition \ref{prop:annulation} (resp. Corollary \ref{cor:annulation}) that $u^\prime=0$ which implies that we can write
	\begin{equation*}
	s^\prime=\sum_j b_jv_j
	\end{equation*}
	with $b_j \in \cA_{\C \times X}$ and $v_j \in \cI$. Let $c_j$ be a primitive of $b_j$ in a neighborhood of $t_0$ and set $w=\sum_j c_j \otimes v_j$. Thus $(s-w)^\prime=0$ in $\cA^m_{\C \times X}$ which implies that $s-w \in p_2^{-1}\cA_{X}^m$. Finally since $s-w$ and $s$ have  the same image in $\cN$, it follows that $u$ does not depend on $t$ i.e $u \in p_2^{-1}\cM(U)$.
\end{proof}

\section{Holomorphic Frobenius actions}

In this section, we precise certain aspects of the definition of an $\mathrm{F}$-action on a DQ-algebra or on a DQ-module. This notion was introduced in \cite[Definition 2.2 and Definition 2.4]{KR}.

Let $(X,\{ \cdot\, , \, \cdot \})$ be a complex Poisson manifold. We assume that it comes equipped with a torus action, $\C^\times\to \Aut(X), \; t \mapsto \mu_t$ such that $\mu^\ast_t\{f,\,g\}= t^{-m}\{\mu^\ast_t f, \, \mu^\ast_t g \} $ with $m \in \Z^\ast$.

\begin{notation}
	\begin{itemize}
		\item We denote by $\sigma \colon \C^\times \times \C^\times \to \C^\times$ the group law of $\C^\times$,
		
		\item $\mu  \colon \C^\times \times X   \to X$ the action of $\C^\times$ on $X$.
		
		\item $\wtmu \colon \C^\times \times X \to \C^\times \times X, \; (t,x) \mapsto (t,\mu(t,x))$
		
		\item for $t \in \C^\times$, the morphism
		\begin{equation*}
		i_t \colon X \to \C^\times \times X, \; x \mapsto (t,x).
		\end{equation*}
		
		\item We write $\wtmu_t$ for the composition $\wtmu \circ i_t$ (Note that $\mu_t=\mu \circ i_t$).
		
		\item Consider the product of manifolds $\C^\times \times X$. We denote by
		$p_i$ the $i$-th projection. 
		
		\item Consider the product of manifolds $\C^\times \times \C^\times \times X$. We denote by
		$q_i$ the $i$-th projection, and by $q_{ij}$ the $(i,j)$-th projection
		({\em e.g.,} $q_{13}$ is the projection from 
		$ \C^\times \times \C^\times \times X$ to $\C^\times \times X$, $(t_1,t_2,x_3) \mapsto (t_1,x_3)$). 
		\item We write $a_X \colon X \to \pt$ for the unique map from $X$ to the point.
		\item Recall that, in all this paper, $\cA_X$ is DQ-algebra and we write $\cA_{\C^\times \times X}$ for the DQ-algebra $ \cO_{\C^\times}^\hbar \ubtimes \cA_X $.
	\end{itemize}
\end{notation}

\begin{lemma}\label{lem:extensioncompletion}
	Let $\widetilde{\theta}:\wtmu^{-1} \cA_{\C^\times \times X} \to \cA_{\C^\times \times X}$ be a morphism of sheaves of $p_1^{-1}\cO_{\C^\times}$-algebras such that the adjoint morphism $\psi: \cA_{ \C^\times  \times X} \to \wtmu_\ast\cA_{\C^\times \times X}$ is a continuous morphism of Fr\'echet $\C$-algebras. Then the dashed arrow in the below diagram is filled by a unique morphism $\widetilde{\lambda}$ of  $q_{12}^{-1}\cO_{\C^\times \times \C^\times}$-algebras. If $\widetilde{\theta}$ is an isomorphism then $\widetilde{\lambda}$ also.
	
	\begin{equation*}
	\xymatrix{
		( \id_{\C^\times} \times \wtmu )^{-1}(\cO_{\C^\times} \boxtimes \cA_{\C^\times  \times X}) \ar[r]^-{ \id \times \widetilde{\theta}} \ar@{^{(}->}[d] & \cO_{\C^\times} \boxtimes \cA_{\C^\times \times X} \ar@{^{(}->}[d]\\
		( \id_{\C^\times} \times \wtmu )^{-1}\cA_{ \C^\times \times \C^\times \times X } \ar@{-->}[r]_-{\widetilde{\lambda}}& \cA_{\C^\times \times \C^\times \times X}
	}
	\end{equation*}
\end{lemma}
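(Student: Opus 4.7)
The strategy is to construct $\widetilde{\lambda}$ by extension by continuity from the upper horizontal arrow $\id \times \widetilde{\theta}$ along the (locally) dense left vertical inclusion of the diagram. The essential input is that, locally on a product of Stein opens, $\cA_{\C^\times \times \C^\times \times X} = \cO_{\C^\times}^\hbar \ubtimes \cA_{\C^\times \times X}$ is, by construction, the completion of (a sheaf built from) $\cO_{\C^\times} \boxtimes \cA_{\C^\times \times X}$ with respect to the natural Fr\'echet topology on holomorphic sections combined with the $\hbar$-adic topology. The hypothesis that the adjoint $\psi$ of $\widetilde{\theta}$ is continuous for the Fr\'echet topology is precisely what is needed in order for $\id \otimes \psi$ to extend to this completion.

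Concretely, I would reduce to working on charts $V_1 \times V_2 \times U$ with $V_i \Subset \C^\times$ and $U$ a small Stein open in $X$. Taking adjoints, I seek a continuous $\C$-algebra morphism
\begin{equation*}
\Psi \colon \cA_{\C^\times \times \C^\times \times X} \longrightarrow (\id_{\C^\times} \times \wtmu)_\ast \cA_{\C^\times \times \C^\times \times X}
\end{equation*}
whose restriction to $\cO_{\C^\times} \boxtimes \cA_{\C^\times \times X}$ coincides with $\id \boxtimes \psi$. Continuity of $\psi$ yields continuity of $\id \otimes \psi$ on the algebraic tensor product for the projective Fr\'echet topology, hence a unique continuous extension $\Psi$ to the completion; passing back via adjunction produces $\widetilde{\lambda}$. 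I then verify that $\Psi$ (equivalently $\widetilde{\lambda}$) respects multiplication---since the star product on $\cA_{\C^\times \times \C^\times \times X}$ is itself the Fr\'echet-continuous extension of the multiplication defined on $\cO_{\C^\times} \boxtimes \cA_{\C^\times \times X}$---and is $q_{12}^{-1}\cO_{\C^\times \times \C^\times}$-linear, which transfers from the $p_1^{-1}\cO_{\C^\times}$-linearity of $\widetilde{\theta}$ together with the tautological linearity in the first factor.

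Uniqueness would follow from two observations: modulo each $\hbar^n$, the map $\widetilde{\lambda}$ is determined on a topologically generating family of sections by the commutativity of the diagram and $q_{12}^{-1}\cO$-linearity, and $\hbar$-adic completeness of the target then forces $\widetilde{\lambda}$ itself to be unique; this also ensures that the local constructions glue coherently. Finally, if $\widetilde{\theta}$ is an isomorphism, then its inverse has a continuous adjoint $\psi^{-1}$ (for instance by the open mapping theorem for Fr\'echet spaces, together with the fact that $\psi$ modulo $\hbar$ is an isomorphism of coherent $\cO$-algebras, hence bicontinuous on local sections), so the same construction produces a $\widetilde{\lambda}^\prime$ extending $\id \times \widetilde{\theta}^{-1}$, and the uniqueness clause forces $\widetilde{\lambda}^\prime = \widetilde{\lambda}^{-1}$. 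The main obstacle in this plan is the topological bookkeeping: pinning down the Fr\'echet structures on local sections precisely, identifying $\cA_{\C^\times \times \C^\times \times X}$ as the relevant topological completion of $\cO_{\C^\times} \boxtimes \cA_{\C^\times \times X}$, and confirming that Fr\'echet continuity of $\psi$ indeed translates into continuity of $\id \otimes \psi$ in the required completed sense. Once this setup is in place, the rest of the argument reduces to standard functional analysis.
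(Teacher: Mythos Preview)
Your proposal is correct and follows essentially the same route as the paper: pass to the adjoint $\psi$, form $\id \otimes \psi$ on a basis of product opens, and extend by Fr\'echet continuity to the completed tensor product, which one identifies locally with sections of $\cA_{\C^\times \times \C^\times \times X}$. The paper makes the topological bookkeeping you flag as the ``main obstacle'' explicit by invoking nuclearity of the sheaves involved (so that projective and injective tensor topologies agree) and by writing out the identification $\cO_{\C^\times}(U_i)\,\widehat{\otimes}_\pi\,\cA_{\C^\times\times X}(V_j)\simeq \cA_{\C^\times\times\C^\times\times X}(U_i\times V_j)$ via the product description $\cO^\hbar \simeq \prod \cO$; your phrase ``combined with the $\hbar$-adic topology'' is slightly misleading here, since everything is handled purely in the Fr\'echet category.
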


\begin{proof}
	By adjunction, it is equivalent to show that the dashed arrow in the below diagram is filled by a unique map of $q_{12}^{-1}\cO_{\C^\times \times \C^\times}$-algebras.

	\begin{equation*}
	\xymatrix{
		(\cO_{\C^\times} \boxtimes \cA_{\C^\times  \times X}) \ar[r]^-{ \id \times \psi} \ar@{^{(}->}[d] & ( \id_{\C^\times} \! \!\times \wtmu )_{\ast}(\cO_{\C^\times} \boxtimes \cA_{\C^\times \times X})   \ar@{^{(}->}[d]\\
		\cA_{ \C^\times \times \C^\times \times X } \ar@{-->}[r]&( \id_{\C^\times} \!\! \times \wtmu )_{\ast}\cA_{\C^\times \times \C^\times \times X}
	}
	\end{equation*}
	
	Denoting by $\overset{\mathrm{p}}{\boxtimes}$ the external product of presheaves, there is a morphism of presheaves
	\begin{equation}\label{mor:presheafprecomp}
	\id \overset{\mathrm{p}}{\boxtimes} \psi \colon \cO_{\C^\times} \overset{\mathrm{p}}{\boxtimes} \cA_{\C^\times \times X} \to \cO_{\C^\times} \overset{\mathrm{p}}{\boxtimes} \wtmu_\ast \cA_{\C^\times \times X}.
	\end{equation}

	Recall that DQ-algebras, the sheaf $\cO_{\C^\times}$ as well as $\wtmu_{\ast}\cA_{\C^\times \times X}$ are sheaves of nuclear Fr\'echet $\C$-algebras. Moreover, there exists a countable basis $\mathfrak{B}$ of open set of $\C^\times \times \C^\times \times X$ of the form $U_i \times V_j$ such that $\cA_{\C^\times \times X} |_{V_j}$ is isomorphic to a star-algebra. Hence, evaluating the morphism \eqref{mor:presheafprecomp}, on the $U_i \times V_j \in \mathfrak{B}$, we get the continuous morphism of topological $\C$-algebras \eqref{mor:contfre} (As the spaces, we consider are nuclear, the choice of a topology on the tensor products does not matter. For instance, we endow all the tensor product of nuclear spaces with the projective topology.) 
	
	\begin{equation}\label{mor:contfre}
	(\id \otimes \psi)_{U_i \times V_j} \colon \cO_{\C^\times}(U_i) \otimes_\pi \cA_{\C^\times \times X} (V_j) \to \cO_{\C^\times}(U_i) \otimes_\pi \wtmu_\ast\cA_{\C^\times \times X}(V_j).
	\end{equation}
	
	By definition the morphisms $\id_{U_i} \! \otimes  \psi_{V_j}$ are compatible with restrictions and applying the completion functor to the above morphisms, we get the following diagram
	
	\begin{equation*}
	\xymatrix{
		\cO_{\C^\times}(U_i) \otimes_\pi \cA_{\C^\times \times X} (V_j) \ar[r]^-{\id \otimes \psi} \ar[d] & \cO_{\C^\times}(U_i) \otimes_\pi \wtmu_\ast\cA_{\C^\times \times X}(V_j) \ar[d]\\
		\cO_{\C^\times}(U_i) \widehat{\otimes}_\pi \cA_{\C^\times  \times X} (V_j)\ar[r]^-{\id \widehat{\otimes} \psi}& \cO_{\C^\times}(U_i) \widehat{\otimes}_\pi \wtmu_\ast\cA_{\C^\times \times X}(V_j).
	}
	\end{equation*}
	We have obtained a family of morphisms of Fr\'echet algebras $\lbrace \id_{U_i} \widehat{\otimes} \psi_{V_j} \rbrace_{U_i \times V_j \in \mathfrak{B}}$. 
	
	We describe the completion of the topological vector spaces 
	\begin{align*}
	\cO_{\C^\times}(U_i) \otimes_\pi \cA_{\C^\times \times X} (V_j) && \cO_{\C^\times}(U_i) \otimes_\pi \wtmu_\ast\cA_{\C^\times \times X}(V_j).
	\end{align*}
	
	Observe that, on $V_j$, there is an isomorphism of Fr\'echet algebra
	\begin{equation*}
	\cA_{\C^\times \times X}(V_j) \simeq \cO^\hbar_{\C^\times \times X}(V_j).
	\end{equation*}
	Hence, we obtain a continuous inclusion with dense image
	\begin{equation*}
	\cO_{\C^\times}(U_i) \otimes_\pi \cO^\hbar_{\C^\times \times X} (V_j) \hookrightarrow \prod \cO_{\C^\times}(U_i) \otimes_\pi \cO_{\C^\times \times X}(V_j).
	\end{equation*}
	Applying the completion functor and using the fact that it commutes with products, we obtain the following isomorphisms algebras
	\begin{equation*}
	\cO_{\C^\times}(U_i) \widehat{\otimes}_\pi \cO^\hbar_{\C^\times \times X} (V_j) \stackrel{\sim}{\to} \prod \cO_{\C^\times}(U_i) \widehat{\otimes}_\pi \cO_{\C^\times \times X}(V_j)\simeq \cA_{\C^\times \times \C^\times \times X}(U_i \times V_j).
	\end{equation*}
	Similarly, we have that
	\begin{equation*}
	\cO_{\C^\times}(U_i) \widehat{\otimes}_\pi \wtmu_\ast\cA_{\C^\times \times X}(V_j) \simeq (\id_{\C^\times} \times \wtmu)_\ast \cA_{\C^\times \times \C^\times \times X}(U_i \times V_j).
	\end{equation*}
	Hence, we have obtained a family of morphism of $\C$-algebras $\lbrace \cA_{\C^\times \times \C^\times \times X}(U_i \times V_j) \to (\id \times \wtmu)_\ast \cA_{\C^\times \times \C^\times \times X} (U_i \times V_j) \rbrace_{U_i \times V_j \in \mathfrak{B}}$ which extends to a unique morphism of sheaves on $\C^\times \times \C^\times \times X$, $\cA_{\C^\times \times \C^\times \times X} \to (\id \times \wtmu)_\ast \cA_{\C^\times \times \C^\times \times X}$.
\end{proof}
By \cite[Lemma 2.2.9]{KS3}, there is a canonical morphism
\begin{equation*}
q_{23}^\sharp \colon q_{23}^{-1}\cA_{\C^\times \times X} \to \cA_{\C^\times \times \C^\times \times X}.
\end{equation*}
With notations as in Lemma \ref{lem:extensioncompletion}, we obtain the morphism $\lambda$ as the composition
\begin{equation}\label{mor:lambda}
\lambda \colon (\id \times \mu)^{-1} \cA_{\C^\times \times X} \stackrel{(\id \times \wtmu)^{-1}q_{23}^\sharp}{\longrightarrow} (\id \times \wtmu)^{-1}\cA_{\C^\times \times \C^\times \times X} \stackrel{\widetilde{\lambda}}{\to}\cA_{\C^\times \times \C^\times \times X}.
\end{equation}

We introduce the functor
\begin{align*}
\mathrm{Ev_t} &\colon \Mod(p_1^{-1} \cO_{\C^\times}) \to \Mod(\C_X)\\ 
\hspace{3cm}       & \cM \mapsto a_X^{-1}(\cO_{\C^\times,t} / \mathfrak{m}_t) \!\! \te_{a_X^{-1} \cO_{\C^\times,t}} \! \! i_t^{-1}\cM \simeq i_t^{-1}\cM / a_X^{-1}\mathfrak{m}_t\, i_t^{-1}\cM. 
\end{align*}
In particular, $\mathrm{Ev}_t(\cA_{\C^\times \times X}) \simeq \cA_X$ and $\mathrm{Ev}_t(\wtmu^{-1}\cA_{\C^\times \times X}) \simeq \mu^{-1}_t \cA_X$.\\

The following definition should be compared with \cite[Definition 2.2]{KR} and with \cite[p.15]{BLPW}.
\begin{definition}
	An $\mathrm{F}$-action with exponent $m$ on $\cA_X$ is the data of an isomorphism of $p_1^{-1} \cO_{\C^\times}$-algebras $\widetilde{\theta} \colon \wtmu^{-1}\cA_{\C^\times \times X} \to \cA_{\C^\times \times X}$ such that 
	
	\begin{enumerate}[(a)]
		
		\item the morphism $\theta_t :=\mathrm{Ev_t}(\widetilde{\theta})$ satisifies $\theta_1=\id$,
		
		\item for every $t \in \C^\times$, $\theta_t(\hbar^n)=t^{mn} \hbar^n$,
		
		\item the adjoint morphism of $\widetilde{\theta}$, $\widetilde{\psi}: \cA_{\C^\times \times X} \to \wtmu_\ast\cA_{\C^\times \times X}$ is a continuous morphism of Fr\'echet $\C$-algebras,
		
		\item setting
		\begin{align*}
		\theta \colon \mu^{-1} \cA_X \stackrel{\wtmu^{-1} p_2^{\sharp}}{\longrightarrow} \wtmu^{-1} \cA_{ \C^\times \times X} \stackrel{\widetilde{\theta}}{\to} \cA_{ \C^\times \times X}
		\end{align*}
		the below diagram commutes,
		\begin{equation*}
		\xymatrix{(\id_{\C^\times} \times \mu)^{-1} \mu^{-1} \cA_X \ar[rr]^-{(\id_{\C^\times} \times \mu)^{-1}\theta} \ar@{=}[dd] && (\id_{\C^\times} \times \mu)^{-1} \cA_{\C^\times \times X}  \ar[d]^-{\lambda}\\ 
			&&  \cA_{\C^\times \times \C^\times \times X}\\
			( \sigma \times \id_X)^{-1} \mu^{-1} \cA_X \ar[rr]^-{( \sigma \times \id_X)^{-1} \theta} && ( \sigma \times \id_X)^{-1} \cA_{\C^\times \times X}\ar[u]
		}
		\end{equation*}
		where $\lambda$ is provided by Lemma \eqref{lem:extensioncompletion}. 
		
	\end{enumerate}
\end{definition}
\begin{definition}
	An $\mathrm{F}$-action on $\cA_X^{loc}$ is the localization with respect to $\hbar$ of an $\mathrm{F}$-action on $\cA_X$.
\end{definition}

\begin{remark}
	It would be possible to define directly the notion of $\mathrm{F}$-action on $\cA_X^{loc}$ but the definition would be slightly more involved. Moreover, any such action would be induced by an $\mathrm{F}$-action on $\cA_X$. This justify the choice of our previous definition.
\end{remark}
The pair 
\begin{equation}\label{mor:point}
(i_t,ev_t)\colon  (X, \cA_X^{loc}) \to (\C^\times \times X,\cA^{loc}_{\C^\times \times X}) 
\end{equation}
is a morphism of ringed spaces. The $\mathrm{F}$-action on $\cA_X$ induces another morphism of ringed spaces 
\begin{equation}\label{mor:muextension}
(\mu,\theta) \colon (\C^\times \times X, \cA^{loc}_{\C^\times \times X}) \to (X, \cA^{loc}_X). 
\end{equation}
\begin{remark}
	A word of caution about morphism \eqref{mor:muextension}. This morphism is a morphism of $\C$-ringed spaces but not of $\C^\hbar$-ringed spaces.
\end{remark}
The morphism 
\begin{equation}\label{mor:muid}
\lambda \colon(\id \times \mu)^{-1} \cA^{loc}_{\C^\times \times X} \to \cA^{loc}_{\C^\times \times \C^\times \times X } 
\end{equation}
provided by Lemma \ref{lem:extensioncompletion} and the data of the $\mathrm{F}$-action $\theta$ on $\cA_X^{loc}$ allows to define a morphism of ringed space
\begin{equation*}
(\id \times \mu, \lambda) \colon (\C^\times \times \C^\times \times X,\cA^{loc}_{ \C^\times \times \C^\times \times X}) \to (\C^\times \times X,\cA^{loc}_{\C^\times \times X}).
\end{equation*}
The morphism of sheaves
\begin{equation*}
\sigma^\sharp \colon \sigma^{-1}\cO_{\C^\times} \to \cO_{\C^\times \times \C^\times}
\end{equation*}
induces a map
\begin{equation*}
\alpha \colon (\sigma \times \id_X)^{-1}\cA_{\C^\times \times X} \stackrel{\sigma^\sharp\widehat{\otimes}\id}{\longrightarrow} \cA_{\C^\times \times \C^\times \times X}
\end{equation*}
which provides a morphism of ringed spaces
\begin{equation*}
(\sigma \times \id_X, \alpha) \colon (\C^\times \times \C^\times \times X, \cA_{\C^\times \times \C^\times \times X}) \to (\C^\times \times X,\cA_{\C^\times \times X}).
\end{equation*}
\begin{lemma}
	The morphisms of sheaves of rings $\theta$, $\lambda$ and $\alpha$ are flat.
\end{lemma}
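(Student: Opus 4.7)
The plan is to handle the three morphisms uniformly by reducing each to the flatness of the canonical external-product morphism $p^\sharp$ of \cite[Lemma 2.3.2]{KS3} recalled in the preliminaries, exploiting the principle that the inverse image of a flat ring morphism along a biholomorphism remains flat (since flatness is a stalk-wise property and inverse image preserves stalks).

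For $\theta$, the definition provides the factorization $\theta = \widetilde{\theta} \circ \wtmu^{-1}(p_2^\sharp)$, using $\mu = p_2 \circ \wtmu$. The morphism $\widetilde{\theta}$ is a ring isomorphism by hypothesis (hence flat), $p_2^\sharp$ is flat by the preliminaries, and $\wtmu$ is a biholomorphism of $\C^\times \times X$. Thus $\theta$ is a composition of flat ring morphisms and is itself flat.

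For $\lambda$, the construction \eqref{mor:lambda} yields $\lambda = \widetilde{\lambda} \circ (\id \times \wtmu)^{-1}(q_{23}^\sharp)$. Here $\widetilde{\lambda}$ is an isomorphism by Lemma \ref{lem:extensioncompletion} (since $\widetilde{\theta}$ is), $q_{23}^\sharp$ is flat (it is the analogue of $p_2^\sharp$ for the three-fold product), and $\id \times \wtmu$ is a biholomorphism. The same argument as above gives flatness.

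The case of $\alpha$ requires more care since $\sigma$ is a multiplication map rather than a projection. I would introduce the biholomorphism $\phi \colon \C^\times \times \C^\times \to \C^\times \times \C^\times$, $(t_1, t_2) \mapsto (t_1 t_2, t_2)$, which satisfies $\sigma = p_1 \circ \phi$ and therefore $\sigma \times \id_X = q_{13} \circ (\phi \times \id_X)$. Since $\phi$ is a biholomorphism, it induces a ring automorphism of $\cO^\hbar_{\C^\times \times \C^\times}$, which, by the functoriality of the external product in the DQ-setting, lifts to an isomorphism $\tilde{\phi} \colon (\phi \times \id_X)^{-1}\cA_{\C^\times \times \C^\times \times X} \xrightarrow{\sim} \cA_{\C^\times \times \C^\times \times X}$. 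One then verifies that $\alpha = \tilde{\phi} \circ (\phi \times \id_X)^{-1}(q_{13}^\sharp)$, so $\alpha$ is flat for the same reasons as above. The main obstacle is the construction of $\tilde{\phi}$ and the verification of this factorization, which amount to tracing through the definition $\cA_{\C^\times \times \C^\times \times X} = \cO^\hbar_{\C^\times \times \C^\times} \ubtimes \cA_X$ and checking that the pullback $\phi^\ast$ is compatible with the completed external product and with the definition of $\alpha$ as $\sigma^\sharp \widehat{\otimes} \id$.
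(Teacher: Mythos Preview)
Your arguments for $\theta$ and $\lambda$ are identical to the paper's: factor through an isomorphism composed with the pullback of a $p_2^\sharp$-type map, invoking \cite[Lemma 2.3.2]{KS3}. For $\alpha$ your approach is a global version of what the paper does locally. The paper argues that since $\sigma$ is a submersion, one can choose a local biholomorphism $g\colon U\times V\to W$ with $\sigma\circ g=p_1$, and then checks that $(g\times\id)^{-1}\alpha$ coincides with $q_{13}^\sharp$, which is flat. You instead exploit that $\C^\times$ is a group to write down the explicit global biholomorphism $\phi(t_1,t_2)=(t_1t_2,t_2)$ satisfying $\sigma=p_1\circ\phi$, and then argue in exactly the same way. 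Your version is slightly cleaner because no local charts are needed; the paper's version would generalise to an arbitrary submersion in place of $\sigma$. The verification you flag as the ``main obstacle'' --- that $\tilde\phi\circ(\phi\times\id_X)^{-1}q_{13}^\sharp$ really equals $\alpha=\sigma^\sharp\,\widehat{\otimes}\,\id$ --- is the same identification the paper leaves implicit when asserting that the top row of its diagram is $q_{13}^\sharp$; it amounts to the functoriality of $\cO^\hbar_{(\cdot)}\ubtimes\cA_X$ under biholomorphisms of the first factor and is routine.
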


\begin{proof}
	The proof for $\theta$ and $\lambda$ are similar. Hence, we only provide the proof for $\theta$. Since $\widetilde{\theta}$ is an isomorphism it is flat and $\mu^{-1}p_2^\sharp$ is flat by \cite[Lemma 2.3.2]{KS3}. Thus, $\theta= \widetilde{\theta} \circ \mu^{-1}p_2^\sharp$ is flat.
	
	We now prove the flatness of $\alpha$. Since $\sigma$ is a submersion, for every $(t_1,t_2) \in \C^\times \times \C^\times$, there exist an open neighborhood $W$ of $(t_1,t_2)$ and a biholomorphism $g:U\times V \to W$ such that $p_1(z_1,z_2)=\sigma \circ g (z_1,z_2)=z_1$. Since flatness is a local question, we can restrict $\alpha$ to an open neighborhood of the form $W \times W^\prime$ with $W^\prime$ an open subset of $X$. Hence, we obtain the following commutative diagram
	\begin{equation*}
	\xymatrix{
		(p_1 \times \id_{W^\prime})^{-1}\cA_{U \times X} \ar[rr]^-{(g \times \id_{W^\prime})^{-1} \alpha\vert_{W \times W^\prime}} \ar[d]_-{\wr} && \cA_{U \times V \times W^\prime} \ar[d]^-{\wr}\\
		(\sigma|_W \times \id_{W^\prime})^{-1}\cA_{\C^\times \times X} \ar[rr]^-{\alpha\vert_{W \times W^\prime}} && \cA_{\C^\times \times \C^\times \times X}\vert_{W \times W^\prime}
	}
	\end{equation*}
	where the top morphism $(g \times \id_{W^\prime})^{-1} \alpha\vert_{W \times W^\prime}=q_{13}^\sharp$ is flat by \cite[Lemma 2.3.2]{KS3}. This implies that $\alpha$ is flat.
\end{proof}

The following definition is an adaptation of \cite[Definition 2.4]{KR} along the line of \cite[ch.1 \S3 Definition 1.6]{GIT}.

\begin{definition}
	An $\mathrm{F}$-action on a $\cA_X^{loc}$-module $\cM$ is the data of an isomorphism of $\cA^{loc}_{\C^\times \times X}$-modules
	\begin{equation}
	\phi \colon \mu^{\ast} \cM \stackrel{\sim}{\to} p_2^\ast \cM
	\end{equation}
	such that the diagram
	\begin{equation}\label{diag:asso}
	\scalebox{0.85}{\xymatrix{
			(\id_{\C^\times} \times \mu)^{\ast} \mu^{\ast} \cM \ar[rr]^-{(\id_{\C^\times} \times \mu)^{\ast}\phi} \ar@{=}[dd] 
			&& (\id_{\C^\times} \times \mu)^{\ast} p_2^\ast \cM \ar[r]^-{\sim} 
			& q^\ast_{23} \mu^\ast \cM \ar[r]^{q_{23}^\ast \phi} 
			& q_{23}^\ast p_2^\ast \cM \ar[d]^-{\wr}\\ 
			&&&&  q_3^\ast \cM \\
			(\sigma \times \id_X)^{\ast} \mu^{\ast} \cM \ar[rrrr]^-{(\sigma \times \id_X)^{\ast} \phi} 
			&&&& (\sigma \times \id_X)^{\ast} p_2^\ast \cM\ar[u]_-{\wr}
	}}
	\end{equation}
	commutes. 
\end{definition}
Following \cite{KR}, we denote by $\Mod_{\mathrm{F}}(\cA_X^{loc})$ the category of $(\cA_X^{loc},\theta)$-modules whose morphisms  are the morphisms of $\cA_X^{loc}$-modules compatible with the action of $\C^\times$. This category is a $\C$-linear abelian category. We write $\Mod_{\mathrm{F},\coh}(\cA_X^{loc})$ for the full subcategory of $\Mod_{\mathrm{F}}(\cA_X^{loc})$ the objects of which are coherent modules in $\Mod(\cA_X^{loc})$.

Let $\cM$ be an $\cA_X^{loc}$-module endowed with an $\mathrm{F}$-action $\phi \colon \mu^{\ast}\cM \to p_2^\ast \cM$ and $t \in \C^\times$. There is the following commutative diagram defining the morphism $\phi_t$

\begin{equation*}
\xymatrix{
	i_t^\ast \mu^\ast \cM \ar[d]_-{\wr} \ar[r]^-{i_t^\ast \phi} & i_t^\ast p^\ast_2 \cM \ar[d]^-{\wr}\\
	\mu_t^{-1} \cM \ar[r]^-{\phi_t} & \cM
}
\end{equation*}
where the vertical map are isomorphism of $\C_X$-modules. Hence, we have obtained a map of $\C_X$-module
\begin{equation*}
\phi_t \colon \mu_t^{-1} \cM \to \cM
\end{equation*}

such that

\begin{enumerate}[(a)]
	\item $\phi_t$ depends holomorphically of $t$,
	
	\item $\phi_{t t^\prime}=\phi_{t^\prime} \circ \mu^{-1}_{t^\prime}\phi_{t}$ for $t, \, t^\prime \in \C^{\times}$,
	
	\item $\phi_t(a m)= \theta_t(a) \phi_t(m)$ for $a \in \cA_{X}^{loc}$ and $m \in \cM$.
\end{enumerate}

\begin{remark}
	\begin{enumerate}[(a)]
		\item We will usually write $\phi_{t t^\prime}=\phi_{t^\prime} \circ \phi_{t}$ instead of $\phi_{t t^\prime}=\phi_{t^\prime} \circ \mu^{-1}_{t^\prime}\phi_{t}$.
		
		\item This implies that an $\mathrm{F}$-action in our sense gives rise to an $\mathrm{F}$-action in the sense of \cite{KR}. In practice, the examples of $\mathrm{F}$-action in the sense of \cite{KR} are also $\mathrm{F}$-action in our sense. 
	\end{enumerate}
\end{remark}

Let $\cM$ be an $\cA_X^{loc}$-module endowed with an $\mathrm{F}$-action $\phi \colon \mu^{\ast}\cM \to p_2^\ast \cM$. The $\mathrm{F}$-action provides a derivation of $\cM$. Indeed, notice that $p_2^\ast \cM$ has a structure of left $p_2^{-1}\cD_{\C^\times}$-module. Hence, we have

\begin{equation}\label{mor:prederiv}
\xymatrix{
	\mu^{\ast}\cM \ar[r]^-{\phi} & p_2^{\ast} \cM \ar[r]^-{\partial_t} &  p_2^{\ast} \cM
}.
\end{equation}

Let $t_0 \in \C^\times$ and consider the morphism
\begin{equation*}
i_{t_0} \colon X \to X \times \C^\times, \; x \mapsto (x,t_0).
\end{equation*}

Applying the functor $i^{-1}_{t_0}$ to the morphism \eqref{mor:prederiv}, we obtain

\begin{equation*}
\xymatrix{
	\dfrac{d\phi_t(\cdot)}{dt}\vert_{t=t_0}\colon \mu^{-1}_{t_0}\cM \ar[r]& i^{-1}_{t_0} \mu^\ast \cM \ar[r]^{i_{t_0}^{-1} \phi} & i_{t_0}^{-1} p_2^\ast\cM \ar[r]^-{i_{t_0}^{-1}\partial_t} &   i_{t_0}^{-1} p_2^\ast\cM \ar[r]^-{ev_{t_0}}& \cM.
}
\end{equation*}
In particular, when $t_0=1$, we get
\begin{equation*}
\xymatrix{
	v\colon \cM \ar[r]& i^{-1}_1 \mu^\ast \cM \ar[r]^{i_1^{-1} \phi} & i_1^{-1} p_2^\ast\cM \ar[r]^-{i_1^{-1}\partial_t} &   i_1^{-1} p_2^\ast\cM \ar[r]^-{ev_1}& \cM.
}
\end{equation*}
In other words,
\begin{align*}
v & \colon \cM \to \cM\\
& \hspace{0.4cm}s \mapsto \dfrac{d\phi_t(s)}{dt}\vert_{t=1}.
\end{align*}
The morphism $v$ is a $\C$-linear derivation of the module $\cM$.

\section{Invariant sections}

\subsection{Generalities}

We start by defining the notion of locally invariant and invariant sections. If $U$ is an open subset of $X$, we sometimes write, for brevity, $tU$ instead of $\mu(t,U)$.
\begin{definition}
	Let $(\cM, \phi) \in \Mod_{\mathrm{F}}(\cA_X^{loc})$, $U \subset X$ and $s \in \cM(U)$.
	\begin{enumerate}[(i)]
		\item The section $s$ is locally invariant at $x^\prime$  if there exists an open neighborhood $V \times U^\prime  \subset \C^\times \times X$ of $(1,x^\prime)$ such that  for every $(t,x) \in V \times U^\prime, \; \mu(t,x) \in U$ and $\phi_t(s_{\mu(t,x)})=s_x$. 
		
		\item The section $s$ is locally invariant on $U$, if it is locally invariant at every $x^\prime \in U$.
		
		\item Assume that $U \subset X$ is stable by the action of $\C^\times$. A section $s \in \cM(U)$ is invariant if for every $t \in \C^\times$, $\phi_t(s)=s$.
	\end{enumerate}
\end{definition}

\begin{lemma}\label{lem:extensionderive}
	Let $\cM \in \Mod_{\mathrm{F}}(\cA_X^{loc})$, $U \subset X$ an open subset and $s \in \cM(U)$ such that $v(s)=0$. Then, for every $x^\prime \in U$ there is a neighborhood $V \times U^\prime$ of $(1,x^\prime) \in   \C^\times \times U$  such that for every $ t^\prime \in V$
	\begin{enumerate}
		\item $\frac{d\phi_t(s\vert_{t^\prime U^\prime})}{dt}\vert_{t=t^\prime}=0$.
		
		\item  $\phi_{t^\prime}(s\vert_{t^\prime U^\prime})=s\vert_{U^\prime}$
	\end{enumerate}
\end{lemma}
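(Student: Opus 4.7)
The plan is to work with the section $\Phi := \phi(\mu^*s)$, viewed as a section of $p_2^*\cM$ on a suitable neighborhood of $(1,x^\prime)$. First I would choose an open neighborhood $V \times U^\prime$ of $(1,x^\prime)$ with $V$ connected, simply connected, and small enough that $\mu(V \times U^\prime) \subset U$. Then $\Phi \in (p_2^*\cM)(V \times U^\prime)$ has germ $\phi_t(s_{\mu(t,x)}) \in \cM_x$ at $(t,x)$, and the hypothesis $v(s)=0$ translates into $(\partial_t \Phi)\vert_{t=1} = 0$.

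The core computation is claim (1), and the key is to exploit the cocycle diagram \eqref{diag:asso} in the correct order. Pulling back $\mu^*s$ under $(\sigma \times \id_X)$ and under $(\id_{\C^\times} \times \mu)$ gives, at the level of germs in $\cM_x$, the relation
\[
\phi_{t_1 t_2}(s_{\mu(t_1 t_2,x)}) = \phi_{t_2}\bigl(\phi_{t_1}(s_{\mu(t_1 t_2,x)})\bigr).
\]
Fixing $t' \in V$ and writing $t = r t'$ with $r$ near $1$, I would \emph{choose the decomposition that makes $v$ act on $s$} rather than on $\phi_{t'}(s)$, namely $\phi_{r t'}(s) = \phi_{t'}(\phi_r(s))$. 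Differentiating in $r$ at $r=1$, using that $\phi_{t'}$ is $\C$-linear and does not depend on $r$, gives
\[
t'\cdot \partial_t|_{t=t'} \phi_t(s\vert_{t' U'}) \;=\; \phi_{t'}\bigl(v(s)\vert_{U'}\bigr) \;=\; 0,
\]
since $v(s)=0$ on $U$. Dividing by $t' \in \C^\times$ yields (1).

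For (2), claim (1) says $\partial_t \Phi = 0$ on all of $V \times U^\prime$. To deduce that $\Phi$ is the pull-back of a section constant in $t$, I would apply Corollary \ref{cor:loccons}; after shrinking $U^\prime$ if needed, one can reduce to a coherent situation by replacing $\cM$ with a coherent $\cA^{loc}_X$-submodule containing the relevant sections (e.g.\ locally by $\cA^{loc}_X\cdot s$, noting that on the neighborhood the relation $\phi_{t'}(s\vert_{t' U'}) = s\vert_{U'}$ we aim to prove in fact keeps us inside such a submodule). It then follows that $\Phi \in p_2^{-1}\cM(V \times U^\prime)$, so $\Phi(t',x) = \Phi(1,x)$ on $V \times U^\prime$. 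Finally, $\phi_1 = \id$ (which comes from the cocycle applied to $t=t'=1$, giving $\phi_1 = \phi_1 \circ \phi_1$ combined with the invertibility of $\phi_1$), so $\Phi(1,x) = s_x$ and claim (2) follows.

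The main obstacle is not the computation itself but the bookkeeping: unpacking the diagram \eqref{diag:asso} into a usable pointwise identity for germs, and being careful about the order of the factorization in the cocycle so that the derivation $v$ lands on $s$. The coherence subtlety in the integration step is minor and handled by reducing to the coherent submodule generated by $s$ locally.
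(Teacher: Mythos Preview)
Your approach matches the paper's proof essentially line for line: the paper also picks $V \times U'$ with $\mu(V \times U') \subset U$, uses the cocycle $\phi_{tt'} = \phi_{t'}\circ \phi_t$ together with the chain rule (writing $\tfrac{d}{dt}\phi_{tt'}(s)\big|_{t=1} = t'\,\tfrac{d}{d\tau}\phi_\tau(s)\big|_{\tau=t'}$) to get (1), and then invokes Corollary~\ref{cor:loccons} for (2).

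One comment on your coherence reduction in step (2). You are right that Corollary~\ref{cor:loccons} is stated only for coherent $\cM$, and the paper's proof simply applies it without comment; your instinct to reduce to a coherent submodule is the correct repair. However, the specific choice $\cA^{loc}_X\cdot s$ does not do the job as written: that module lives near the points $\mu(t',x)$, not near $x$, whereas $\Phi$ is a section of $p_2^\ast\cM$ with values in $\cM_x$, and your parenthetical justification (``the relation we aim to prove keeps us inside such a submodule'') is circular. The clean fix is purely local at a point $(t_0,x_0)\in V\times U'$: write the germ $\Phi_{(t_0,x_0)} = \sum_i a_i\otimes m_i$ with finitely many $m_i\in\cM_{x_0}$, and set $\cM' = \sum_i \cA^{loc}_X m_i$ near $x_0$. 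This is finitely generated over the Noetherian sheaf $\cA^{loc}_X$, hence coherent, and both $\Phi$ and $\partial_t\Phi$ lie in $p_2^\ast\cM'$ by construction. Now Corollary~\ref{cor:annulation} (to pass from $(\partial_t\Phi)(t')=0$ for all $t'$ to $\partial_t\Phi=0$) and Corollary~\ref{cor:loccons} apply to $\cM'$ and give (2).
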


\begin{proof}
	\noindent (i) Since $\mu$ is continuous, there exists a neighbourhhod $V \times U^\prime \subset U$ of $(1,x^\prime)$ such that $\mu(V \times U^\prime) \subset U$.
	Hence for every $t^\prime \in V$, we have the following equalities.
	\begin{align*}
	\frac{d\phi_{t}(s\vert_{t^\prime U^\prime})}{dt}\vert_{t=t^\prime} 
	&=\frac{1}{t^\prime}\frac{d\phi_{tt^\prime}(s\vert_{t^\prime U^\prime})}{dt}|_{t=1}\\
	&= \frac{1}{t^\prime} \phi_{t^\prime}\left(\frac{d\phi_{t}(s\vert_{t^\prime U^\prime})}{dt}|_{t=1}\right)\\
	&=\frac{1}{t^\prime}\phi_{t^\prime}(v(s\vert_{t^\prime  U^\prime}))=0.
	\end{align*}
	\noindent (ii) Shrinking $V$ and $U^\prime$ if necessary Corollary \ref{cor:loccons} implies that locally for every $t^\prime \in V$ $\phi_{t^\prime}(s\vert_{t^\prime U^\prime})=s\vert_{U^\prime}$.
\end{proof}
\begin{proposition}\label{prop:invequi}
	Let $(\cM,\phi) \in \Mod_{\mathrm{F},\coh}(\cA_X^{loc})$ and $U$ be an open subset of $X$ stable by the action of $\C^\times$ and let $s \in \cM(U)$. The following conditions are equivalent
	\begin{enumerate}[(1)]
		\item for every $t \in \C^\times$, $\phi_t(s)=s$,
		\item $v(s)=0$.
	\end{enumerate}
\end{proposition}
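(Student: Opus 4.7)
The easy direction (1) $\Rightarrow$ (2) I would treat as follows: if $\phi_t(s) = s$ for every $t \in \C^\times$, then the two sections $\phi(\mu^{-1}s)$ and $p_2^{-1}s$ of $p_2^\ast\cM$ on $\C^\times \times U$ agree pointwise at every $t$, so by Corollary \ref{cor:annulation} they coincide. Since $p_2^{-1}s$ is independent of $t$, its $\partial_t$-derivative vanishes; evaluating at $t=1$ then gives $v(s) = 0$.

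The substantial content is (2) $\Rightarrow$ (1). My plan is to work with the single section
\[
w := \phi(\mu^{-1}s) - p_2^{-1}s \in (p_2^\ast\cM)(\C^\times \times U),
\]
whose pointwise evaluations are $w(t) = \phi_t(s) - s$, and to show $w = 0$ directly. The identity $\phi_1 = \id$ already yields $w(1) = 0$, so in view of Corollary \ref{cor:loccons} together with the connectedness of $\C^\times$, it will be enough to establish that $w'(t) = 0$ for every $t \in \C^\times$: by that corollary $w$ will then belong to $p_2^{-1}\cM(\C^\times \times U)$, and on each slice $\C^\times \times U_\alpha$ (with $U_\alpha$ a connected component of $U$) such a section is determined by its value at any single $t$, so $w(1) = 0$ forces $w = 0$.

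The key calculation, which I expect to be the main (though mild) obstacle, is exactly the one carried out in the proof of Lemma \ref{lem:extensionderive}(i); the only improvement needed is that because $U$ is $\C^\times$-stable the calculation is valid at every $t \in \C^\times$ rather than only near $1$, since $tU' \subset U$ unconditionally. Concretely, using the cocycle $\phi_{tt'} = \phi_{t'}\circ\phi_t$ together with the alternate form $\phi_{tt'} = \phi_t \circ \phi_{t'}$ coming from the abelianness of $\C^\times$, and differentiating in $t'$ at $t'=1$ via the chain rule, I would obtain
\[
t\, w'(t) \;=\; \left.\tfrac{d}{dt'}\phi_t\bigl(\phi_{t'}(s)\bigr)\right|_{t'=1} \;=\; \phi_t\bigl(v(s)\bigr) \;=\; 0,
\]
so $w'(t) = 0$ as required. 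The one step to justify carefully is the commutation of $\phi_t$ with the $\partial_{t'}$-derivative, but this is immediate since $\phi_t$ is $\C$-linear and acts only on the $X$-factor of $p_2^\ast\cM$, hence commutes with differentiation along $\C^\times$.
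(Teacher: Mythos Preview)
Your proposal is correct and follows essentially the same route as the paper. Both directions match: for (1)$\Rightarrow$(2) you and the paper invoke Corollary~\ref{cor:annulation}, and for (2)$\Rightarrow$(1) you both carry out the chain-rule computation of Lemma~\ref{lem:extensionderive}(i) (noting, as you do explicitly, that $\C^\times$-stability of $U$ makes it valid for all $t\in\C^\times$) and then appeal to Corollary~\ref{cor:loccons}; your packaging via the single section $w$ is only a cosmetic variation on the paper's terse ``$\phi(s)=s$''.
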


\begin{proof}
	It follows from Corollary \ref{cor:annulation} that $(i) \Rightarrow$ (ii).
	
	Let us prove that (ii) $\Rightarrow$ (i). It follows from Lemma \ref{lem:extensionderive} that for every  $t^\prime \in \C^\times$, $\frac{d \phi(s)}{dt}\vert_{t=t^\prime}=0$. Then, Corollary \ref{cor:loccons} implies that $\phi(s)=s$
\end{proof}

\begin{lemma}\label{lem:resinvariant}
	Let $\cM \in \Mod_{\mathrm{F},\coh}(\cA_X^{loc})$. Locally, there exists a coherent $\cA_X$-module $\cM_0 \subset \cM$ such that $\cM \simeq \cM_0^{loc}$ and $v(\cM_0) \subset \cM_0$.
\end{lemma}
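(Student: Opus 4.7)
The question is local on $X$. We work in a neighborhood $U$ of a fixed point $x_0$ on which $\cM$ is generated as an $\cA_X^{loc}$-module by finitely many sections $m_1,\dots,m_n$. Set $\cM_0^\prime := \sum_i \cA_X m_i$; the argument of Corollary~\ref{cor:annulation} shows that $\cM_0^\prime$ is a coherent $\cA_X$-submodule of $\cM$ with $(\cM_0^\prime)^{loc}=\cM$. Writing $v(m_i)=\sum_j c_{ij}m_j$ with $c_{ij}\in\cA_X^{loc}$ of a priori negative $\hbar$-valuation, $\cM_0^\prime$ is not $v$-stable in general, and the naive enlargement $\sum_{k\ge 0,i}\cA_X\cdot v^k(m_i)$ is not obviously contained in any $\hbar^{-N}\cM_0^\prime$ with $N$ bounded.

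The key step is to use the full holomorphic $\C^\times$-action $\phi$, not just the derivation $v$. The image $\phi(m_i)$ is a section of $p_2^\ast\cM$ on a neighborhood of $\{1\}\times U$; since $p_2^\ast\cM$ is the $\hbar$-localization of $p_2^\ast\cM_0^\prime$, after shrinking to a relatively compact $V\times U'\ni(1,x_0)$ there is a uniform integer $M\ge 0$ with $\hbar^M\phi(m_i)\in p_2^\ast\cM_0^\prime(V\times U')$ for every $i$. The action of $\partial_t$ on the first factor preserves $p_2^\ast\cM_0^\prime$ and commutes with multiplication by $\hbar$, so for every $k\ge 0$,
\[\hbar^M\,\partial_t^k\phi_t(m_i)\big|_{t=1}\;\in\;\cM_0^\prime(U').\]
Differentiating the cocycle identity for $\phi$ in an auxiliary variable and evaluating at $1$ yields, by induction, the falling-factorial formula $\partial_t^k\phi_t|_{t=1}=v(v-1)(v-2)\cdots(v-k+1)$. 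Since ordinary powers $v^k$ are integer linear combinations of these falling factorials (Stirling numbers of the second kind, giving a $\Z$-basis change of $\Z[v]_{\le k}$), one obtains the uniform bound
\[v^k(m_i)\in\hbar^{-M}\cM_0^\prime(U')\qquad\text{for all }k\ge 0,\;1\le i\le n.\]

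Define $\cM_0 := \sum_{k\ge 0,\,i}\cA_X\cdot v^k(m_i)$. Then $v(\cM_0)\subset\cM_0$ by construction, $\cM_0\supset\cM_0^\prime$ gives $\cM_0^{loc}=\cM$, and the uniform bound places $\cM_0$ inside the coherent module $\hbar^{-M}\cM_0^\prime$. The stalk $\cA_{X,x_0}$ is Noetherian (locally isomorphic to $\cO_{X,x_0}[[\hbar]]$ with star-product), so $\cM_{0,x_0}$ is finitely generated: for some $J_0$, the finite set $\{v^j(m_i):0\le j\le J_0,\,1\le i\le n\}$ already generates $\cM_{0,x_0}$, so one has stalk-level relations $v^{J_0+1}(m_i)=\sum_{j\le J_0,i'}a^{(i)}_{j,i'}v^j(m_{i'})$ that extend to a smaller neighborhood $W$ of $x_0$. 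A straightforward induction on $k$, applying $v$ to the relations and re-using them to eliminate the emerging $v^{J_0+1}(m_{i'})$-terms, shows that $v^k(m_i)\in\cN_0:=\sum_{j\le J_0,i'}\cA_X v^j(m_{i'})$ on $W$ for every $k$, whence $\cM_0|_W=\cN_0|_W$ is finitely generated and coherent. The main obstacle is the uniform $\hbar$-valuation estimate on the $v^k(m_i)$, which crucially uses that $\phi$ is an honest holomorphic $\C^\times$-action rather than merely the infinitesimal data of $v$.
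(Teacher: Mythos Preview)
Your argument is correct and follows essentially the same strategy as the paper. Both proofs pick local generators $m_i$, use the holomorphic action $\phi_t$ to bound all iterates $v^k(m_i)$ uniformly inside $\hbar^{-M}\sum_i\cA_X m_i$, set $\cM_0=\sum_{k,i}\cA_X v^k(m_i)$, and invoke Noetherianity to get coherence.

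The only cosmetic differences are in the bookkeeping. For the uniform bound, the paper substitutes $t=e^u$ so that $\dfrac{d^k}{du^k}\phi_{e^u}\big|_{u=0}=v^k$ directly (the cocycle gives $\partial_u\phi_{e^u}=v\circ\phi_{e^u}$); you instead compute $\partial_t^k\phi_t|_{t=1}=v(v-1)\cdots(v-k+1)$ and pass through Stirling numbers. Both routes encode the identity $t\partial_t\phi_t=v\circ\phi_t$. For coherence, the paper appeals to the sheaf-Noetherian property of $\cA_X$ to see that the increasing chain $\cM_{0,\le p}\subset\hbar^{-M}\cM_0'$ locally stabilizes; you argue stalkwise and then spread the resulting relations, which is equivalent. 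One small point worth making explicit in your induction: applying $v$ to the relation uses that $v$ preserves $\cA_X$ (not just $\cA_X^{loc}$), which holds since $v(\hbar)=m\hbar$ and $v$ restricts to the derivation described in the paper's expansion~\eqref{eq:basederivation}.
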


\begin{proof}
	The question is local and $\cM$ is coherent. Hence we can assume that $\cM$ is finitely generated. Thus, there exist $s_1,\ldots,s_n \in \cM$ such that $\cM=\sum_{i=
		1}^n \cA_X^{loc} s_i$. This implies that there is an $l \in \N$ such for every $1 \leq i \leq n$, there exist $a_{ij} \in \hbar^{-l}\cA_{\C^\times \times X}$ such that
	\begin{equation*}
	\phi_t(s_i)=\sum_{j=1}^n a_{ij}(t) s_j.
	\end{equation*}
	Setting $t=e^u$, this implies that
	\begin{equation*}
	v^{k}(s)= \sum_{i=j}^n \frac{d^k}{du^k}a_{ij}(e^u)\bigg\rvert_{u=0} s_i.
	\end{equation*}
	Setting $\cN=\sum_{i=1}^{n} \cA_X s_i$, it follows that for every $k \in \N, v^{k}(s) \in \hbar^{-l} \cN$. We consider the submodules 
	\begin{align*}
	\cM_0=\sum_{k, \,i} \cA_X \, v^k(s_i) && \cM_{0, \leq p}=\sum_{\substack{1 \leq i \leq n \\ \; 0\leq k \leq p}} \cA_X \, v^k(s_i)
	\end{align*}
	of $\hbar^{-l} \cN$. It is clear that $\cM_0$ is stable by $v$, that $\cM_{0, \leq p}$ is a coherent $\cA_X$-module,  that for every $p \in \N, \;\cM_{0, \leq p} \subset \cM_{0, \leq p+1}$ and $\cM_0= \bigcup_{p \geq 0} \cM_{0,\leq p}$. Since $\cA_X$ is a noetherian sheaf of algebras and $\hbar^{-l} \cN$ is a coherent $\cA_X$-module, the sequence $(\cM_{0, \leq p})_{p \in \N}$ is locally stationary. Thus, there exists a covering $(U_j)_{j \in J}$ of $X$ such that for every $j \in J$ there exists $p_j \in \N$ such that  $\cM_0|_{U_j}= \cM_{0,\leq p_j}$. Hence, $\cM_0$ is coherent.
	
\end{proof}

\begin{theorem}\label{thm:invariantgeneration}
	Assume that the action $\mu$ is free and let $\cM \in \Mod_{\mathrm{F},\coh}(\cA^{loc}_X)$. Then $\cM$ is locally finitely generated by locally invariant sections.
\end{theorem}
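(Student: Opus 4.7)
The plan is to fix an arbitrary $x_0 \in X$ and produce, on some neighborhood of $x_0$, finitely many locally invariant sections of $\cM$ that generate it. By Lemma \ref{lem:resinvariant}, I may shrink to a neighborhood on which there is a coherent $\cA_X$-submodule $\cM_0 \subset \cM$ with $\cM_0^{loc} \simeq \cM$ and $v(\cM_0) \subset \cM_0$; by coherence of $\cM_0$ I then pick finite generators $s_1, \ldots, s_n \in \cM_0(U)$ near $x_0$, which also generate $\cM$ as an $\cA_X^{loc}$-module. Since $\mu$ is free at $x_0$, the orbit map $t \mapsto \mu(t, x_0)$ is an immersion at $t=1$, so the implicit function theorem furnishes a codimension-one complex submanifold $W \subset X$ through $x_0$ and an open neighborhood $V \ni 1$ in $\C^\times$ such that the restriction $\mu \colon V \times W \to X$ is a biholomorphism onto an open neighborhood $U$ of $x_0$ (shrinking $U$ if needed).

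The key construction is defined stalk-wise: for $x \in U$ with unique decomposition $x = \mu(t, y)$, $(t, y) \in V \times W$, set
\[
(\tilde s_i)_x := \phi_{t, y}^{-1}\bigl((s_i)_y\bigr) \in \cM_x.
\]
Since $\phi^{-1}\colon p_2^* \cM \xrightarrow{\sim} \mu^* \cM$ is an isomorphism of sheaves on $\C^\times \times X$ and $\mu \colon V \times W \to U$ is a biholomorphism, this pointwise assignment assembles into a genuine section $\tilde s_i \in \cM(U)$. Using $\phi_1 = \id$, one has $(\tilde s_i)_{x_0} = (s_i)_{x_0}$, so the $\tilde s_i$ generate $\cM_{x_0}$; coherence of $\cM$ over $\cA_X^{loc}$ then propagates this generation to a neighborhood of $x_0$.

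It remains to check local invariance of each $\tilde s_i$. Writing an arbitrary $x' \in U$ as $x' = \mu(t'_0, y')$ with $(t'_0, y') \in V \times W$, the stalk form of the cocycle condition (property~(b)) reads $\phi_{tt'_0, y'} = \phi_{t'_0, y'} \circ \phi_{t, x'}$, so that $\phi_{tt'_0, y'}^{-1} = \phi_{t, x'}^{-1} \circ \phi_{t'_0, y'}^{-1}$. Substituting into the definition of $\tilde s_i$ at $\mu(t, x') = \mu(tt'_0, y')$ and cancelling $\phi_{t, x'} \circ \phi_{t, x'}^{-1}$ yields $\phi_{t, x'}\bigl((\tilde s_i)_{\mu(t, x')}\bigr) = \phi_{t'_0, y'}^{-1}((s_i)_{y'}) = (\tilde s_i)_{x'}$ for $t$ close enough to $1$, which is precisely the local invariance condition at $x'$. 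The main subtlety I anticipate is justifying that the stalk-wise formula genuinely defines a section of $\cM$ on $U$; this relies essentially on the biholomorphism property of the slice map, itself a direct consequence of the freeness of $\mu$.
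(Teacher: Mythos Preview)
Your slice-and-transport idea is natural, but the key step is not merely under-justified: the stalkwise formula \((\tilde s_i)_x := \phi_{t,y}^{-1}\bigl((s_i)_y\bigr)\) in general does \emph{not} define a section of \(\cM\). The point is that \((s_i)_y\) is a full germ at \(y\in W\), carrying information about \(s_i\) in the flow direction as well; transporting that germ by \(\phi_t^{-1}\) produces, at each \(x\), the germ of a section that is ``frozen'' in the flow direction with the wrong constant. Concretely, take \(X=\C^\times\) with \(\mu(t,z)=tz\), \(\cA_X=\cO_X^\hbar\) (trivial star product), \(\theta_t(f)(z)=f(tz)\), \(\theta_t(\hbar)=t\hbar\), and \(\cM=\cA_X^{loc}\). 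With \(W=\{1\}\) and \(s=\hbar\), one computes \(\theta_{t,1}^{-1}(\hbar_1)=\) germ at \(z=t\) of the \emph{constant} section \(t^{-1}\hbar\). Thus \((\tilde s)_z\) is, for each \(z\), the germ of the constant \(z^{-1}\hbar\); as \(z\) varies the constants vary, and no section of \(\cO_X^{\hbar,loc}\) has this family of germs. The genuine locally invariant section here is \(w\mapsto w^{-1}\hbar\), whose germ at \(z\) is the non-constant function germ \(w^{-1}\hbar\), different from your \((\tilde s)_z\). Your local-invariance check \(\phi_t((\tilde s_i)_{\mu(t,x')})=(\tilde s_i)_{x'}\) is a correct identity between germs, but it is vacuous once the family fails to be a section.

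This is why the paper does not transport germs: it encodes ``transport along the flow with prescribed restriction to a slice'' as the Cauchy problem \(v(B)+BA=0\), \(B|_{Y'}=\mathrm{I}_l\), for a matrix \(B\in\mathrm{GL}_l(\cA_U)\) acting on a generating tuple \(e\) of the \(v\)-stable lattice \(\cM_0\). Expanding in \(\hbar\) and using freeness to straighten \(v_0\) to \(\partial_{z_1}\), this becomes a recursive family of linear first-order holomorphic PDEs, solved by Cauchy--Kowalevski (with a global extension via Bony--Schapira), and \(Be\) is then a genuine tuple of locally invariant generating \emph{sections}. If you want to salvage your geometric picture, the correct statement is: there is a unique section \(\tilde s_i\) on \(U\) with \(v(\tilde s_i)=0\) and \(\tilde s_i|_W=s_i|_W\) (restriction to the slice, not germs at points of \(W\)); proving existence of such \(\tilde s_i\) is exactly the Cauchy problem the paper solves. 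Your appeal to Lemma~\ref{lem:resinvariant} is also unnecessary in your argument as written, since you never use \(v\)-stability of \(\cM_0\); but it is essential in the paper's approach to guarantee that \(v(e_i)\) lies in the \(\cA_X\)-span of the \(e_j\), so that the matrix \(A\) has entries in \(\cA_X\) rather than \(\cA_X^{loc}\).
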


\begin{notation}
	We introduce the following notation. Let $x \in \C^n$ and $\underline{\delta}=(\delta_1, \ldots, \delta_n)$ (resp. $\underline{\eta}=(\eta_1, \ldots, \eta_n))$ with $\delta_i > 0$ for $1 \leq i \leq n$ (resp. $\eta_i > 0$ for $1 \leq i \leq n$). We denote by $R(x,\underline{\delta},\underline{\eta})$ the subset of $\C^n$ the elements of which are the $z=(z_1,\ldots,z_n) \in \C^n$ such that for every $1 \leq i \leq n$
	\[
	\Re(x_i)-\delta_i<\Re(z_i)<\Re(x_i)+\delta_i \; \textnormal{and} \; \Im(x_i)-\eta_i<\Im(z_i)<\Im(x_i)+\eta_i.
	\]
	We call such a subset of $\C^n$ a rectangle centered in $x$.
\end{notation}

\begin{proof}[Proof of Theorem \ref{thm:invariantgeneration}]
	By lemma \ref{lem:resinvariant}, $\cM$ has a coherent $\cA_X$-lattice $\cM_0$ stable by $v$. Let us show that this lattice is generated by locally invariant sections. This problem is local. So, it is sufficient to work in an open neighborhood $U$ of a point $x \in X$. Since $\cM_0$ is coherent, we can assume it is finitely generated on $U$ by a family $(e_i)_{1 \leq i \leq l}$. For every $1 \leq i \leq l$ there exists $a_{ij} \in \cA_U$ with $1 \leq j \leq l$ such that
	\begin{equation*}
	v(e_i)=\sum_{j=i}^l a_{ij} e_j.
	\end{equation*}
	We form the matrix $A=(a_{ij})_{1 \leq i,j \leq l} \in \mathrm{M}_l(\cA_U)$ and set
	\begin{align*}
	e=\begin{pmatrix}
	e_1\\
	\vdots\\
	e_l
	\end{pmatrix}
	&& v(e)=\begin{pmatrix}
	v(e_1)\\
	\vdots\\
	v(e_l)
	\end{pmatrix}.
	\end{align*}
	It follows that $v(e)=Ae$.
	
	If $B \in \mathrm{GL}_l(\cA_U)$ is such that $v(Be)=0$ then, $Be$ will provide a generating family of $\cM_0$, formed of locally invariant sections. 
	We have the following equalities.
	\begin{align*}
	v(Be)&=v(B)e+Bv(e)\\
	&=v(B)e+BAe.
	\end{align*}
	Therefore, we are looking for $B$ in $\mathrm{GL}_l(\cA_U)$ such that  
	\begin{equation*}
	v(B)e+BAe=0.
	\end{equation*}
	Hence, it is sufficient to prove the existence of $B \in \mathrm{GL}_l(\cA_U)$ such that
	\begin{equation}\label{eq:master}
	v(B)+BA=0.
	\end{equation}
	
	Shrinking $U$ if necessary, we can further assume that $\cA_U$ is isomorphic to a star-algebra $(\cO_U^\hbar,\star)$ the star-product of which is given by 
	\begin{equation*}
	f \star g= \sum_{i \geq 0} P_i(f,g)
	\end{equation*}
	where the $P_i$ are bidifferential operators. Writting $A=\sum_{k \geq 0} A_k \hbar^k$ and $B=\sum_{j \geq 0} B_j \hbar^j$ with $A_k$ and $B_j$ in $\mathrm{M}_l(\cO_U)$, we obtain
	\begin{align*}
	BA&=(\sum_{j \geq 0}\hbar^j B_j) \star (\sum_{k \geq 0} \hbar^k A_k)\\
	&=\sum_{n \geq 0} \hbar^n (\sum_{k+j+m=n} P_m(B_j,A_k))
	\end{align*}
	and
	\begin{align*}
	v(B)&=\sum_{n \geq 0} \hbar^n \left( \sum_{i+k=n} v_k(B_i) + m \,n \, B_n \right)\\
	&=\sum_{n \geq 0} \hbar^n \left(\sum_{\substack{i+k=n \\ k\neq 0}} v_k(B_i)+v_0(B_n) + m \,n \, B_n \right).
	\end{align*}
	Thus, equation \eqref{eq:master} is equivalent to the recursive system of equations
	\begin{equation}\label{eq:mastercoor}
	\forall n \in \N, \;v_0(B_n) + B_n(mn \, \id +A_0)\,+ \sum_{\substack{i+k=n \\ i\neq n}} v_k(B_i)+\sum_{\substack{k+j+m=n \\ j \neq n}}P_m(B_j,A_k)=0.
	\end{equation} 
	Setting $C_0=0$ and $C_n=- \left( \sum_{\substack{i+k=n \\ i\neq n}} v_k(B_i)+\sum_{\substack{k+j+m=n \\ j \neq n}}P_m(B_j,A_k) \right)$, the system \eqref{eq:mastercoor} is rewritten as
	\begin{equation}\label{eq:mastercoorcomp}
	\forall n \in \N, \;v_0(B_n) + B_n(mn \, \id +A_0)=C_n.  
	\end{equation}
	Notice that $C_n$ depends only of the $B_j$ with $j<n$. 
	
	Since the action of $\C^\times$ on $X$ is free, $v_0$ does not vanish and we can find a local coordinate system $(z_1,\ldots,z_{n})$ on an open neighborhood of $x$ (that we still denote $U$) such that in this coordinate system
	\begin{equation*}
	v_0 \equiv \partiel{}{z_1}.
	\end{equation*} 
	Thus, the system \eqref{eq:mastercoorcomp} becomes
	\begin{equation}\label{eq:normalized}
	\forall n \in \N, \; \partiel{B_n}{z_1}+B_n(mn \id + A_0)=C_n. 
	\end{equation}
	
	If $(z;u)$ is the coordinnate system on $T^\ast U$ associated to the coordinate system $(z)=(z_1,\ldots,z_n)$ on $U$, it follows from \cite{SatoKas} that the characteristic variety of the system \eqref{eq:normalized} is $\lbrace u_1 = 0 \rbrace$.
	
	Since $U$ is open, there exists a rectangle $R(x,\underline{\delta},\underline{\eta}) \subset U$. Set $Y=\lbrace z \in U|z_1=x_1 \rbrace$ and $Y^\prime=Y \cap R(x,\underline{\delta},\underline{\eta})$. Consider a  function $g \in \cO_U(R(x,\underline{\delta},\underline{\eta}))$. We have the following Cauchy problem
	\begin{equation}\label{eq:sysgen}
	\begin{cases}
	\partiel{f}{z_1}+f(mn \id + A_0)=g\\
	f|_{Y^\prime}= \mathrm{I}_l.
	\end{cases}
	\end{equation}
	The hypersurface $Y^\prime$ is non-characteristic. Thus, it follows from the Cauchy-Kowaleski Theorem (see for instance \cite[Theorem 3.1.1]{Sch}) that the equation \eqref{eq:sysgen} admits a solution $f$ in a open neighborhood $\Omega$ of $Y^\prime$. Moreover, any hyperplane the normal of which is the limit of characteristic directions in at least one point of $R(x,\underline{\delta},\underline{\eta})$ intersects $Y^\prime$ since the characteristic variety of the system \eqref{eq:sysgen} is $\lbrace u_1= 0 \rbrace$. Then it follows from \cite[Theorem 2.1]{SchBo} that $f$ extends to $R(x,\underline{\delta},\underline{\eta})$. This proves that for every $n \in \N$, the equation \eqref{eq:mastercoor} has a solution $B_n$ with $B_n|_Y=\mathrm{I}_l$ and defined on $R(x,\underline{\delta},\underline{\eta})$. This implies in particular that $B_0(p)=\mathrm{I}_l$ is invertible. Hence, $B_0$ is invertible in a neighborhood $V$ of $x$. This ensures that $B=\sum_{j \geq 0} \hbar^j B_j$ is invertible on $V$ which proves the claim. 
\end{proof}

\subsection{The case of free and proper actions}
We now assume that the action of $\C^\times$ on $X$ is \textbf{free and proper}. We set $Y=X / \C^\times$ and denote by $p\colon X \to Y$ the canonical projection. The morphism $p$ is a $\C^\times$-principal bundle. We say that an open subset $V$ of $X$ is equivariant if $V= \mu(\C^\times \times V)$.

\begin{lemma}\label{lem:extension}
	Let $\cM \in \Mod_{\mathrm{F}}(\cA_X^{loc})$. 
	\begin{enumerate}[(i)]
		\item If $s \in \cM$ is a locally invariant section, then $s$ is locally the restriction of a globally invariant section.
		
		\item   If  $\cM$ is locally finitely generated by locally invariant sections, then $\cM$ is locally finitely generated by invariant sections.
	\end{enumerate}
\end{lemma}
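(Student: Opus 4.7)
The plan is to exploit the principal $\C^\times$-bundle structure coming from the free and proper action. For (i), fix $x' \in X$ and choose a local slice $\Sigma \ni x'$, i.e.\ an open submanifold of $X$ such that the action restricts to a biholomorphism $\mu \colon \C^\times \times \Sigma \xrightarrow{\sim} W := \C^\times \cdot \Sigma$ onto an equivariant open neighborhood of $x'$. After shrinking, one may assume $\Sigma \subset U' \subset U$, where $V \times U'$ is the neighborhood of $(1,x')$ witnessing local invariance of $s$. The extension is built sheaf-theoretically: form $p_2^{\ast}(s|_\Sigma) \in p_2^{\ast}\cM(\C^\times \times \Sigma)$ and apply the inverse of the F-action isomorphism to obtain $\sigma := \phi^{-1}(p_2^{\ast}(s|_\Sigma)) \in \mu^{\ast}\cM(\C^\times \times \Sigma)$. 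At the stalk $(t_0,x_0)$ one computes $\sigma_{(t_0,x_0)} = 1 \otimes \phi_{t_0}^{-1}(s_{x_0})$, so the germs of $\sigma$ all lie in the subsheaf $\mu^{-1}\cM \hookrightarrow \mu^{\ast}\cM$ (injective thanks to flatness of $\theta$ proved earlier). Hence $\sigma$ descends to a section of $\mu^{-1}\cM(\C^\times \times \Sigma)$, which under the biholomorphism $\mu$ corresponds to the desired $\tilde s \in \cM(W)$.

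By construction, $\tilde s_{\mu(t,x)} = \phi_t^{-1}(s_x)$ for every $x \in \Sigma$, $t \in \C^\times$. Invariance is then a direct consequence of the cocycle property $\phi_{tt'} = \phi_{t'} \circ \phi_t$: writing $z = \mu(t,x)$, one has $\phi_{t_0}(\tilde s_{\mu(t_0,z)}) = \phi_{t_0}\phi_{t_0 t}^{-1}(s_x) = \phi_t^{-1}(s_x) = \tilde s_z$. The local invariance of $s$ at $x'$ reads $s_{\mu(t,x)} = \phi_t^{-1}(s_x) = \tilde s_{\mu(t,x)}$ on a neighborhood of $(1,x')$, so $\tilde s$ agrees with $s$ on a suitable open $U'' \ni x'$.

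For (ii), apply (i) to each of the finitely many locally invariant generators $s_1,\ldots,s_n$ of $\cM$ near $x'$, producing invariant sections $\tilde s_1,\ldots,\tilde s_n$ on a common equivariant open $W \ni x'$ that coincide with the $s_i$ on a smaller open $U'' \ni x'$ with $U'' \subset W$. The cokernel $\cK := \coker\bigl(\bigoplus_{i=1}^n \cA_X^{loc}|_W \xrightarrow{(\tilde s_i)} \cM|_W\bigr)$ is a coherent $\cA_X^{loc}|_W$-module, hence has closed support. It vanishes on $U''$, so $\Supp(\cK) \cap U'' = \emptyset$. Using invariance of the $\tilde s_i$ together with the semi-linearity $\phi_t(am) = \theta_t(a)\phi_t(m)$, the locus where the $\tilde s_i$ generate $\cM$ at a stalk is $\C^\times$-stable; consequently $\Supp(\cK)$ is $\C^\times$-stable and disjoint from the equivariant open $\C^\times \cdot U''$. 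Therefore the invariant sections $\tilde s_1,\ldots,\tilde s_n$ generate $\cM$ on the equivariant neighborhood $\C^\times \cdot U''$ of $x'$.

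The real obstacle sits in (i): promoting the pointwise recipe $\tilde s_z = \phi_t^{-1}(s_x)$ into a genuine section of the sheaf $\cM$ on $W$. This is resolved by the combination of three inputs available from the paper: the $\cA^{loc}_{\C^\times \times X}$-linearity of the F-action isomorphism $\phi$, the flatness of $\theta$ (ensuring $\mu^{-1}\cM \hookrightarrow \mu^{\ast}\cM$ is an injection of sheaves so that a section is in the subsheaf as soon as each of its germs is), and the slice theorem biholomorphism $\mu \colon \C^\times \times \Sigma \xrightarrow{\sim} W$. Once this sheaf-theoretic construction is in hand, invariance, agreement with $s$ near $x'$, and the deduction of (ii) are straightforward consequences of the cocycle diagram and coherence of $\cM$.
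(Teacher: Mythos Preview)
Your route to (i) differs from the paper's: the paper defines $\tilde s_y = \phi_{t_0}^{-1}(s_{x_0})$ pointwise for $y = \mu(t_0,x_0)$ with $x_0 \in U'$, checks well-definedness via local invariance of $s$, and leaves implicit that these germs glue to an actual section; your construction via $\sigma = \phi^{-1}(p_2^\ast(s|_\Sigma))$ is designed precisely to supply that glue. But the crucial step --- the germ identity $\sigma_{(t_0,x_0)} = 1 \otimes \phi_{t_0}^{-1}(s_{x_0})$ --- is asserted, not proved, and in fact conflates the germ of $\sigma$ with its \emph{evaluation} at $t = t_0$. For a general $m \in \cM_{\mu(t_0,x_0)}$ the germ $\phi(1 \otimes m) \in (p_2^\ast\cM)_{(t_0,x_0)}$ encodes the whole family $t \mapsto \phi_t(m)$ near $t_0$, not just the value $\phi_{t_0}(m)$; so there is no a~priori reason for $\phi(1 \otimes \phi_{t_0}^{-1}(s_{x_0}))$ to equal the constant germ $1 \otimes s_{x_0}$. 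The correct fix is to observe that on the slice $\C^\times \times \Sigma$ the ring map $\theta \colon \mu^{-1}\cA_X^{loc} \to \cA_{\C^\times \times X}^{loc}$ is an \emph{isomorphism} (not merely flat): since $\mu|_{\C^\times \times \Sigma}$ is a biholomorphism, $\theta$ is an isomorphism modulo $\hbar$, and $\theta(\hbar) = t^m\hbar$ with $t^m$ a unit, so each graded piece $\hbar^n/\hbar^{n+1}$ is an isomorphism; by $\hbar$-completeness $\theta$ itself is an isomorphism on the slice. Then $\mu^{-1}\cM \to \mu^\ast\cM$ is an isomorphism there, $\sigma$ automatically corresponds to a section $\tilde s \in \cM(W)$, and evaluating the equation $\phi(1 \otimes \tilde s) = 1 \otimes s$ at each $t_0$ yields $\tilde s_{\mu(t_0,x_0)} = \phi_{t_0}^{-1}(s_{x_0})$, after which your cocycle argument for invariance is correct as written.

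For (ii) your argument matches the paper's in substance (propagate stalk-wise surjectivity along orbits via the equivariance of $\phi$), just phrased through $\Supp\cK$. Note that the lemma does not assume $\cM$ coherent, so your claim that $\cK$ is coherent is unjustified --- but you never actually use it: the $\C^\times$-stability of $\Supp\cK$ together with $\Supp\cK \cap U'' = \emptyset$ already gives $\Supp\cK \cap (\C^\times \cdot U'') = \emptyset$, with no appeal to closedness of the support. Simply delete the coherence clause.
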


\begin{proof}
	\noindent (i) Let $U$ be an open subset of $X$, assume that $s \in \cM(U)$ and let $x \in U$. Since $s$ is locally invariant on $U$ there exist a neigbourhoud $U^\prime \subset U$ of $x$ and a neighborhood $V$ of $1 \in \C^\times$ such that for every $t \in V, \; x^\prime \in U^\prime$, $\mu(t,x^\prime) \in U$ and $\phi_t(s)=s$.
	
	(a) Shrinking $U^\prime$ and $V$ if necessary we can assume that the pair $U^\prime$ and $V$ satisfies the following property. Let $x_0, \, x_1 \in U^\prime$ and assume that there exists $t \in \C^\times$ such that $x_1=\mu(t,x_0)$ then $t \in V$.

	(b) Let $y \in W=p^{-1}(p(U^\prime))$. There exists $(t_0,x_0) \in  \C^\times \times U^\prime$ such that $y=\mu(t_0,x_0)$. We set
	\begin{equation*}
	\tilde{s}_y=\phi^{-1}_{t_0}(s_{x_0}).
	\end{equation*}
	Let us show that the section $\tilde{s}$ is well defined. Assume that there exist $(t_1,x_1) \in \C^\times \times U^\prime$ such that $y=\mu(t_1,x_1)$. Then there exists $t \in V$ such that $x_1=\mu(t,x_0)$.
	It follows that
	\begin{equation*}
	\phi^{-1}_{t_1}(s_{x_1})= \phi^{-1}_{t_0}(\phi^{-1}_{t}(s_{x_1}))=\phi^{-1}_{t_0}(s_{x_0})=\tilde{s}_y.
	\end{equation*}
	
	(ii) Since $\cM$ is locally finitely generated by locally invariant sections, there exists an open subset $U$ of $X$ and locally invariant sections $(s_1, \ldots, s_n)$ generating $\cM \vert_U$. Keeping the notation and applying the construction of (i) to the family $(s_1, \ldots, s_n)$, we obtain some open sets $U^\prime \subset U$ and $V \subset \C^\times$ satisfying the same properties as in (i) and a family of invariant sections $(\tilde{s}_1,\ldots, \tilde{s}_n)$ defined on the open subset $W=p^{-1}(p(U^\prime))$ of $X$. It remains to prove that $(\tilde{s}_1,\ldots, \tilde{s}_n)$ is a generating family of $\cM\vert_W$. Since $W$ is stable by the action, we can assume for the sake of simplicity that $X=W$. The problem is now equivalent to check that the morphism of sheaves of $\cA_X^{loc}$-modules $u \colon ( \cA_X^{loc} )^n \to \cM, \; e_i \mapsto \tilde{s}_i$ is an epimorphism. There is the following commutative diagram of $p_1^{-1} \cO_{\C^\times}$-modules.
	\begin{equation}\label{diag:epi}
	\xymatrix{ \mu^\ast ( \cA_X^{loc} )^n  \ar[r]^-{\mu^\ast u} \ar[d]_-{\theta} &\mu^\ast\cM \ar[d]^-{\phi} \\
		p_2^\ast ( \cA_X^{loc} )^n  \ar[r]^-{p_2^\ast u}  & p_2^\ast\cM .}
	\end{equation}
	
	Let $x_1 \in X$. Then there exists $x_0 \in U^\prime$ and $t \in \C^\times$ such that $x_1=\mu(t,x_0)$. It follows from the assumption that we can further assume that $t \in V$. We consider the map
	\begin{align*}
	i_{(t,x_0)} \colon  \lbrace \pt \rbrace \to \C^\times \times X, \; \pt \mapsto (t,x_0)
	\end{align*}
	and the evaluation map
	\begin{equation*}
	i_{(t,x_0)}^\sharp \colon i_{(t,x_0)}^{-1}p_2^{-1} \cO_{\C^\times} \simeq \cO_{\C^\times,t} \to \C.
	\end{equation*}
	These two maps allow us to define the morphism of ringed spaces
	\begin{equation*}
	(i_{(t,x_0)},i_{(t,x_0)}^\sharp) \colon(\lbrace \pt \rbrace, \C) \to ( \C^\times \times X, p_2^{-1} \cO_{C^\times}) 
	\end{equation*}
	Applying the functor $i_{(t,x_0)}^\ast$ to the diagram \eqref{diag:epi}, we obtain
	\begin{equation*}
	\xymatrix{  ( \cA_X^{loc} )_{x_1}^n  \ar[r]^-{u_{x_1}} \ar[d]_-{\theta_{(x_0,t)}} &\cM_{x_1} \ar[d]^-{\phi_{(x_0,t)}} \\
		( \cA_X^{loc} )_{x_0}^n  \ar[r]^-{u_{x_0}}  &\cM_{x_0} 
	}
	\end{equation*}
	where the two vertical arrows are isomorphisms and the map $u_{x_0}$ is an epimorphism. This implies that $u_{x_1}$ is an epimorphism which proves the claim.
\end{proof}

\begin{corollary}\label{cor:geninvglob}
	Let $\cM \in \Mod_{\mathrm{F},\coh}(\cA^{loc}_X)$. Then there exists a covering $(V_i)_{i \in I}$ of $X$ by equivariant open subsets of $X$ such that for each $i \in I$, $\cM \vert_{V_i}$ admits, in $\Mod_{\mathrm{F}}(\cA_X^{loc}\vert_{V_i})$, a presentation of length one by free modules of finite rank.
\end{corollary}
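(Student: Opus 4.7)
The plan is to apply Theorem~\ref{thm:invariantgeneration} together with Lemma~\ref{lem:extension}(ii) twice: first to $\cM$ itself in order to produce an F-equivariant epimorphism from a finite free module, and second to the kernel of this epimorphism in order to build the second step of the presentation.

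First I would invoke Theorem~\ref{thm:invariantgeneration}, which gives local generation of $\cM$ by locally invariant sections, followed by Lemma~\ref{lem:extension}(ii), which (using freeness and properness of $\mu$) upgrades such local data to genuinely invariant generators on equivariant neighbourhoods. This produces a covering $(U_j)_{j \in J}$ of $X$ by equivariant opens and, on each $U_j$, a finite family $\tilde{s}_1, \dots, \tilde{s}_{n_j}$ of invariant sections generating $\cM\vert_{U_j}$. I would equip $(\cA_X^{loc})^{n_j}$ with the F-action for which the canonical basis is invariant, namely $\phi_t\bigl(\sum_i a_i e_i\bigr) = \sum_i \theta_t(a_i) e_i$; the cocycle condition \eqref{diag:asso} for this action is an immediate consequence of the corresponding condition for $\theta$. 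With this choice the morphism
\[
u_j \colon (\cA_X^{loc}\vert_{U_j})^{n_j} \to \cM\vert_{U_j}, \quad e_i \mapsto \tilde{s}_i,
\]
is F-equivariant (because the $\tilde{s}_i$ are invariant), and the stalkwise argument used at the end of the proof of Lemma~\ref{lem:extension} shows that $u_j$ is an epimorphism.

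Second, I would pass to $\cK_j := \ker u_j$. Since $\cA_X^{loc}$ is a coherent sheaf of rings and $u_j$ is a morphism between coherent modules, $\cK_j$ is coherent; since $u_j$ is F-equivariant, the F-action on the free module restricts to an F-action on $\cK_j$, so $\cK_j \in \Mod_{\mathrm{F},\coh}(\cA_X^{loc}\vert_{U_j})$. Applying Theorem~\ref{thm:invariantgeneration} and Lemma~\ref{lem:extension}(ii) once more to $\cK_j$ yields a refinement $(V_{ji})_{i}$ of $U_j$ by equivariant opens, together with an F-equivariant epimorphism $(\cA_X^{loc}\vert_{V_{ji}})^{m_{ji}} \twoheadrightarrow \cK_j\vert_{V_{ji}}$. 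Composing with the inclusion $\cK_j \hookrightarrow (\cA_X^{loc})^{n_j}$ produces a length-one F-equivariant presentation of $\cM\vert_{V_{ji}}$ by finite free modules, and the family $(V_{ji})_{j,i}$ is the desired equivariant covering.

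The only nontrivial verification is that the F-action I put on $(\cA_X^{loc})^n$ with invariant basis is well defined and that $u_j$ really intertwines it with the F-action on $\cM$; both reduce directly to the invariance of the $\tilde{s}_i$ and to the F-action axioms for $\cA_X^{loc}$. I therefore expect no substantial obstacle beyond the bookkeeping involved in the two successive refinements of the covering.
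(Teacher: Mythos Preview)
Your proposal is correct and is exactly the intended expansion of the paper's one-line proof, which merely cites Theorem~\ref{thm:invariantgeneration} and Lemma~\ref{lem:extension}(ii). Applying these two results first to $\cM$ and then to the kernel of the resulting equivariant surjection is precisely how one obtains the length-one equivariant presentation, and your remarks on the $\mathrm{F}$-action on $(\cA_X^{loc})^n$ with invariant basis and on the coherence and equivariance of $\cK_j$ are the right routine verifications.
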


\begin{proof}
	This follows from Theorem \ref{thm:invariantgeneration} and Lemma \ref{lem:extension} (ii).
\end{proof}

\section{From DQ-modules to modules over the ring of invariant sections}

Recall that we assume that the action of $\C^\times$ on $X$ is \textbf{free and proper}.

\subsection{Equivariant extension and invariant sections functors}

We define the sheaf of locally invariant sections of $\cA_X$ as the sheaf on $X$ such that for every open set $U$
\begin{equation*}
\cA^{\C^\times}_X(U) = \{ s \in \cA_X(U) | v(s)=0 \}
\end{equation*}
and we also set
\begin{equation*}
\cA^{loc,\,\C^\times}_X(U) = \{ s \in \cA^{loc}_X(U) | v(s)=0 \}.
\end{equation*}
The sheaf of invariant sections of $\cA_X$ is defined as the subsheaf $\cB_Y(0)$ of $p_\ast \cA_X$ which is given, for every open set $V \subset Y$, by
\begin{equation*}
\cB_Y(0)(V) = \{ s \in p_\ast\cA_X(V) | v(s)=0 \}=\{ s \in p_\ast\cA_X(V) | \forall t \in \C^\times,  \theta_t(s)=s \}.
\end{equation*}
This is a sheaf of $\C$-algebra. We define $\cB_Y$ similarly i.e.
\begin{equation*}
\cB_Y(V) = \{ s \in p_\ast\cA^{loc}_X(V) | v(s)=0 \}=\{ s \in p_\ast\cA^{loc}_X(V) | \forall t \in \C^\times, \theta_t(s)=s \}.
\end{equation*}
By definition of $\cB_Y(0)$ and $\cB_Y$, there are morphisms of algebras
\begin{equation}\label{map:Binv}
p^{-1}\cB_Y(0) \to  \cA^{\C^\times}_X,  
\end{equation}
\begin{equation}\label{map:Binvloc}
p^{-1}\cB_Y \to \cA^{loc, \, \C^\times}_X.
\end{equation}

\begin{lemma}
	The morphisms \eqref{map:Binv} and \eqref{map:Binvloc} are isomorphims of $\C$-algebras.
\end{lemma}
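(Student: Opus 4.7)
The plan is to argue stalk-by-stalk for each of the maps \eqref{map:Binv} and \eqref{map:Binvloc}; since the two cases are formally identical, I only treat \eqref{map:Binv} and obtain \eqref{map:Binvloc} by replacing $\cA_X$ by $\cA_X^{loc}$ and $\cB_Y(0)$ by $\cB_Y$ throughout. The map \eqref{map:Binv} itself is built by composing the subalgebra inclusion $p^{-1}\cB_Y(0) \hookrightarrow p^{-1}p_\ast \cA_X$ with the adjunction counit $p^{-1}p_\ast \cA_X \to \cA_X$, both of which are morphisms of $\C$-algebras. Its image lies in $\cA_X^{\C^\times}$ because a section on $p^{-1}(V)$ satisfying $\theta_t(s)=s$ for all $t \in \C^\times$ is in particular annihilated by $v$ on every open subset of $p^{-1}(V)$.

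For surjectivity at a point $x \in X$, I take a germ represented by some $s \in \cA_X(U)$ with $v(s)=0$, and view $\cA_X$ as an $\mathrm{F}$-equivariant module over itself through $\theta$. Lemma \ref{lem:extensionderive} then converts the infinitesimal identity $v(s)=0$ into pointwise local invariance of $s$ in some neighborhood of $x$. Lemma \ref{lem:extension}(i) applied to $\cA_X$ next produces a smaller neighborhood $U' \ni x$, a $\C^\times$-stable open set $W \supset U'$ containing the orbit through $x$, and an invariant section $\tilde{s} \in \cA_X(W)$ extending $s\vert_{U'}$. Because $p \colon X \to Y$ is a $\C^\times$-principal bundle it is open, so $W = p^{-1}(V)$ with $V = p(W)$ an open neighborhood of $p(x)$ in $Y$; thus $\tilde{s}$ defines an element of $\cB_Y(0)(V)$ whose germ at $p(x)$ maps back to the germ of $s$ at $x$.

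For injectivity at $x$, suppose $\tilde{s} \in \cB_Y(0)(V)$ has zero image in $\cA_{X,x}$. The locus
\[
N = \{y \in p^{-1}(V) \mid \tilde{s}_y = 0\}
\]
is open and contains a neighborhood of $x$. Global invariance reads, at the level of stalks, $\theta_{t,y}(\tilde{s}_{\mu(t,y)}) = \tilde{s}_y$; since each $\theta_{t,y}$ is an isomorphism, $\tilde{s}_y=0$ forces $\tilde{s}_{\mu(t,y)}=0$ for every $t \in \C^\times$. Hence $N$ is $\C^\times$-stable, so $N = p^{-1}(p(N))$ with $p(N)$ open in $Y$ and containing $p(x)$; thus $\tilde{s}$ already vanishes on a neighborhood of $p(x)$ in $Y$, so its germ in $\cB_Y(0)_{p(x)}$ is zero.

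The only real content is imported from the previous section: Lemma \ref{lem:extension}(i) is what extends a locally invariant section to a genuinely $\C^\times$-invariant one on a saturated open neighborhood of the orbit, while Lemma \ref{lem:extensionderive} is what bridges between the infinitesimal condition defining $\cA_X^{\C^\times}$ and pointwise local invariance. The remainder of the argument is bookkeeping based on the fact that $p$ is open and that $\theta_t$ is an isomorphism of stalks, and I do not expect any genuine obstacle beyond invoking those two lemmas.
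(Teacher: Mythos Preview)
Your proof is correct and takes essentially the same approach as the paper, whose proof consists of the single sentence ``This follows from Lemma \ref{lem:extension}.'' You have correctly unpacked what this means: surjectivity at a stalk is exactly Lemma \ref{lem:extension}(i) (after using Lemma \ref{lem:extensionderive} to pass from $v(s)=0$ to local invariance, a step the paper leaves implicit), and injectivity is the elementary observation that the vanishing locus of a globally invariant section is $\C^\times$-saturated, which needs nothing beyond $\theta_t$ being an isomorphism and $p$ being open.
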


\begin{proof}
	This follows from Lemma \ref{lem:extension}.
\end{proof}

We define the functor of locally invariant sections as follows
\begin{align*}
(\cdot)^{\C^\times}:&\Mod_\mathrm{F}(\cA^{loc}_X) \to \Mod(\cA_X^{loc,\C^\times})\\ &\hspace{1.45cm}\cM \mapsto \cM^{\C^\times}
\end{align*}
where $\cM^{\C^\times}$ is the subsheaf of $\cM$ such that for every open $U \subset X$
\begin{equation*}
\cM^{\C^\times}(U)=\lbrace s \in \cM(U) | v(s)=0 \rbrace
\end{equation*}
The functor of globally invariant sections is defined by
\begin{align*}
p_\ast^{\C^\times}:&\Mod_\mathrm{F}(\cA^{loc}_X) \to \Mod(\cB_Y)\\ &\hspace{1.45cm}\cM \mapsto \pinv \cM := p_\ast(\cM^{\C^\times}).
\end{align*}
Note that by definition of $p_\ast^{\C^\times}$, there is a natural transformation $i \colon p_\ast^{\C^\times} \to p_\ast$, such that for $(\cM,\psi) \in \Mod_{\mathrm{F}}(\cA^{loc}_X)$
\begin{equation}\label{nattr:i}
i_\cM \colon p^{\C^\times}_\ast \cM \hookrightarrow p_\ast \cM
\end{equation}
is given by the inclusion.

Consider the subsheaf $\widetilde{\cM}$ of $p_\ast \cM$ defined by
\begin{equation*}
\widetilde{\cM}(V):=\lbrace s \in p_\ast \cM(V) \, \vert \, \forall t \in \C^\times, \phi_t(s)=s \rbrace
\end{equation*}
By definition of $p_\ast^{\C^\times}\cM$ and $\widetilde{\cM}$, there is a canonical map
\begin{equation}\label{mor:equivinvsheaf}
\widetilde{\cM} \to p_\ast^{\C^\times}\cM.
\end{equation}
This is an isomorphism by Proposition \ref{prop:invequi}. Hence, we do not make any distinction between $p_\ast^{\C^\times}\cM$ and $\widetilde{\cM}$ and denote both of them by $p_\ast^{\C^\times}\cM$.

By definition of $\cB_Y$ there is a morphism of sheaves of algebras
\begin{equation}\label{mor:extensioninv}
p^{-1}\cB_Y \to \cA_X^{loc}.
\end{equation}
As $p^{-1}\cB_Y\simeq \cA_X^{loc, \C^\times}$, the morphism \eqref{mor:extensioninv} is a monomorphism.
The morphism \eqref{mor:extensioninv} allows us to define the functor
\begin{align*}
p^\ast \colon \Mod(\cB_Y) \to \Mod(\cA_X^{loc}) && \cN \mapsto p^\ast \cN= \cA^{loc}_X \otimes_{p^{-1}\cB_Y} p^{-1} \cN.
\end{align*}
There is the following adjunction

\begin{equation*}
\xymatrix{
	p^\ast \colon \Mod(\cB_Y) \ar@<.4ex>[r]&  \ar@<.4ex>[l]\Mod(\cA^{loc}_X) \colon p_\ast.
}
\end{equation*}

The module $p^\ast \cN$ is canonically equipped with an F-action. Since $p \circ \mu = p \circ p_2$, there is an isomorphism of algebras $\mu^{-1}p^{-1} \cB_Y \simeq p_2^{-1}p^{-1} \cB_Y$. Using the morphism \eqref{mor:extensioninv}, we obtain the following commutative diagram
\begin{equation*}
\xymatrix{
	\mu^{-1}p^{-1} \cB_Y \ar[r] \ar[d]_-{\wr} & \mu^{-1}\cA_X^{loc} \ar[r]^-{\theta} & \cA^{loc}_{\C^\times \times X} \ar@{=}[d]\\
	p_2^{-1}p^{-1} \cB_Y \ar[r] & p_2^{-1}\cA_X^{loc} \ar[r] & \cA^{loc}_{\C^\times \times X}.
}
\end{equation*}
This implies that $\mu^\ast p^\ast \simeq p_2^\ast p^\ast$. Hence, for $\cN \in \Mod(\cB_Y)$, there is an isomorphism $\phi \colon \mu^\ast p^\ast \cN \stackrel{\sim}{\to} p_2^\ast p^\ast \cN$.
That is $(p^\ast\cN, \phi) \in \Mod_{\mathrm{F}}(\cA_X^{loc})$. We obtain a functor $p_{\C^\times}^\ast$
\begin{equation*}
p^\ast_{\C^\times} \colon \Mod(\cB_Y) \to \Mod_{\mathrm{F}}(\cA_X^{loc}), \quad \cN \mapsto p^\ast_{\C^\times} \cN \colon \!\!\!= (p^\ast \cN, \, \phi).
\end{equation*}

\begin{proposition}\label{prop:adjunction}
	The functors
	\begin{equation*}
	\xymatrix{
		p^\ast_{\C^\times} \colon \Mod(\cB_Y) \ar@<.4ex>[r]&  \ar@<.4ex>[l]\Mod_{\mathrm{F}}(\cA^{loc}_X) \colon p_\ast^{\C^\times}
	}
	\end{equation*}
	form the adjoint pair $(p^\ast_{\C^\times},\,p_\ast^{\C^\times})$.
\end{proposition}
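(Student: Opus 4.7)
The plan is to deduce this adjunction from the ordinary sheaf-theoretic adjunction $(p^\ast,\,p_\ast)$ for the morphism of ringed spaces $(X,\cA_X^{loc})\to (Y,\cB_Y)$ associated with the injection \eqref{mor:extensioninv}, by carefully tracking the $\mathrm{F}$-equivariance on one side and the invariance on the other.

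First, I would construct the unit. For $\cN\in\Mod(\cB_Y)$, the ordinary adjunction gives the unit $\eta_\cN\colon \cN\to p_\ast p^\ast\cN$. I claim its image lies in $p_\ast^{\C^\times}(p^\ast\cN)$: indeed, for a local section $s$ of $\cN$, the section $1\otimes s$ of $p^\ast\cN$ comes from $p^{-1}\cB_Y\subset \cA^{loc,\,\C^\times}_X$, and the canonical $\mathrm{F}$-action $\phi$ on $p^\ast\cN$ was defined so that $\mu^\ast p^\ast\simeq p_2^\ast p^\ast$ via the chain $\mu^{-1}p^{-1}\cB_Y\simeq p_2^{-1}p^{-1}\cB_Y$; hence $\phi$ fixes these generators and, by Proposition \ref{prop:invequi}, the image of $\eta_\cN$ is invariant. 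This yields a canonical morphism $\widetilde{\eta}_\cN\colon \cN\to p_\ast^{\C^\times}(p^\ast_{\C^\times}\cN)$.

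Next, for $(\cM,\psi)\in\Mod_{\mathrm{F}}(\cA_X^{loc})$ I would construct the counit $\varepsilon_\cM\colon p^\ast_{\C^\times}(p_\ast^{\C^\times}\cM)\to\cM$ as the composition
\begin{equation*}
p^\ast(p_\ast^{\C^\times}\cM)\longrightarrow p^\ast p_\ast\cM\stackrel{\varepsilon}{\longrightarrow}\cM,
\end{equation*}
where the first arrow is induced by the inclusion $i_\cM$ of \eqref{nattr:i} and $\varepsilon$ is the ordinary counit. The non-trivial point is that $\varepsilon_\cM$ is $\mathrm{F}$-equivariant: this follows because the image of the first arrow is generated, as an $\cA_X^{loc}$-module, by invariant sections of $\cM$, on which $\psi_t$ and the canonical $\mathrm{F}$-action on $p^\ast(p_\ast^{\C^\times}\cM)$ agree by construction.

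Then the bijection
\begin{equation*}
\Hom_{\Mod_{\mathrm{F}}(\cA_X^{loc})}(p^\ast_{\C^\times}\cN,\cM)\;\simeq\;\Hom_{\cB_Y}(\cN,\,p_\ast^{\C^\times}\cM)
\end{equation*}
is defined by $g\mapsto p_\ast^{\C^\times}(g)\circ\widetilde{\eta}_\cN$ and, in the other direction, $f\mapsto \varepsilon_\cM\circ p^\ast_{\C^\times}(f)$. That the resulting maps sit in the right categories, and that they are mutually inverse, follows formally from the ordinary $(p^\ast,p_\ast)$-adjunction once one verifies the two compatibilities above. The triangle identities reduce to those of the underlying ringed-space adjunction.

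The main obstacle I expect is the verification that an $\cA_X^{loc}$-linear morphism $g\colon p^\ast\cN\to\cM$ is $\mathrm{F}$-equivariant if and only if its adjoint $\widetilde{g}\colon\cN\to p_\ast\cM$ factors through $p_\ast^{\C^\times}\cM$. The ``only if'' direction is immediate since $g$ then sends the invariant sections generating the image of $\widetilde{\eta}_\cN$ to invariant sections of $\cM$. For the ``if'' direction one uses that $p^\ast\cN$ is locally generated over $\cA_X^{loc}$ by (the pullback of) sections of $\cN$, which are invariant; since $\mu^\ast g$ and $p_2^\ast g$ are $\cA^{loc}_{\C^\times\times X}$-linear and agree on these generators by the hypothesis that $\widetilde{g}$ lands in the invariants, they coincide, i.e.\ $g$ commutes with the $\mathrm{F}$-actions.
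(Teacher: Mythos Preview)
Your approach is essentially the same as the paper's: both derive the adjunction from the ordinary $(p^\ast,p_\ast)$ adjunction, build the unit by observing that $1\otimes n$ is invariant so that $\eta'_\cN$ factors through $p_\ast^{\C^\times}p^\ast_{\C^\times}\cN$, and build the counit as $\varepsilon'\circ p^\ast i$ and check it is $\mathrm{F}$-equivariant. The paper then verifies the triangle identities by chasing two commutative diagrams that sit over the triangle identities for $(p^\ast,p_\ast)$; your variant, phrasing things via the hom-set bijection and the equivalence ``$g$ is $\mathrm{F}$-equivariant $\Leftrightarrow$ its adjoint factors through $p_\ast^{\C^\times}\cM$'', is just a repackaging of the same content.

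One small caveat: your appeal to Proposition~\ref{prop:invequi} to conclude that $1\otimes s$ lies in $p_\ast^{\C^\times}(p^\ast_{\C^\times}\cN)$ is both unnecessary and formally illegitimate here, since that proposition is stated only for \emph{coherent} modules while the adjunction is asserted on all of $\Mod_{\mathrm{F}}(\cA_X^{loc})$. You do not need it: the $\mathrm{F}$-action $\phi$ on $p^\ast\cN$ is defined precisely so that $\phi$ is the identity on sections coming from $p^{-1}\cN$, hence $\partial_t\phi(1\otimes s)=0$ and $v(1\otimes s)=0$ directly from the definition of $v$. With that adjustment your argument goes through as written.
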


\begin{proof}
	\noindent (i) Let $\cN \in \Mod(\cB_Y)$. We start by constructing the unit of the adjunction $(p^\ast_{\C^\times},\,p_\ast^{\C^\times})$. Consider the unit $\eta^\prime \colon \id \to p_\ast p^\ast $ of the adjunction  
	\begin{equation*}
	\xymatrix{
		p^\ast \colon \Mod(\cB_Y) \ar@<.4ex>[r]&  \ar@<.4ex>[l]\Mod(\cA^{loc}_X) \colon p_\ast.
	}
	\end{equation*}
	The stalk of the map $\eta^\prime_\cN$ at $y \in Y$ is given by
	\begin{equation*}
	\cN_y \to \varinjlim_{y \in V} \cA_X^{loc}(p^{-1}(V)) \te_{\cB_{Y,y}} \cN_y, \; n \mapsto 1 \otimes n.
	\end{equation*}
	Sections of the form $1 \otimes n$ are invariant section of $p_{\C^\times}^\ast\cN$. Thus, the preceding map factorizes through $p^{\C^\times}_\ast p_{\C^\times}^\ast \cN$ and we obtain the following commutative diagram.
	\begin{equation}\label{diag:comunit}
	\xymatrix{
		\cN \ar[rr]^{\eta_\cN^\prime} \ar[rd]_-{\eta_\cN} & &  p_\ast p^\ast \cN\\
		& p^{\C^\times}_\ast p_{\C^\times}^\ast \cN \ar[ru]_-{i_{p^\ast_{\C^\times}\!\cN}} &
	}
	\end{equation}
	We have obtained a natural transformation $\eta: \id \to p_\ast^{\C^\times}p_{\C^\times}^\ast$.

	Let $(\cM,\psi) \in \Mod_{\mathrm{F}}(\cA^{loc}_X)$. Let $\varepsilon^\prime \colon p^\ast p_\ast \to \id$ be the counit of the adjunction
	\begin{equation*}
	\xymatrix{
		p^\ast \colon \Mod(\cB_Y) \ar@<.4ex>[r]&  \ar@<.4ex>[l]\Mod(\cA^{loc}_X) \colon p_\ast.
	}
	\end{equation*}
	We define the following natural transformation
	\begin{equation}\label{map:counit}
	\varepsilon = \varepsilon^\prime \circ p^\ast i
	\end{equation}
	where $i$ is the natural transformation defined in \eqref{nattr:i}.
	By construction, the following diagram commutes.
	\begin{equation}\label{diag:comcounit}
	\xymatrix{
		p^\ast p_\ast \cM \ar[rr]^-{\varepsilon^\prime_\cM} & & \cM \\
		&p_{\C^\times}^\ast p_\ast^{\C^\times} \cM \ar[ru]_-{\varepsilon_\cM} \ar[lu]^-{p^\ast i_{\cM}} &
	}
	\end{equation}
	Let $x$ in $X$. The stalk at $x$ of morphism \eqref{map:counit} is given by
	\begin{equation*}
	\cA^{loc}_{X,x} \te_{\cB_{Y, p(x)}} (p_\ast^{\C^\times} \cM)_{p(x)} \to \cM_x, \; a \otimes m \mapsto am
	\end{equation*}
	This implies that the following diagram
	\begin{equation*}
	\xymatrix{
		\mu^\ast p^\ast p_\ast^{\C^\times}  \cM \ar[d]_-{\phi} \ar[r]^-{\mu^\ast \varepsilon_\cM} &\mu^\ast \cM \ar[d]^{\psi} \\
		p_2^\ast p^\ast p_\ast^{\C^\times}  \cM \ar[r]^-{p_2^\ast \varepsilon_\cM}& p_2^\ast \cM
	}
	\end{equation*}
	where $\phi$ is provided by the functor $p_{\C^\times}^\ast$ is commutative. Hence, morphism \eqref{map:counit} is equivariant i.e. $\varepsilon$ is a morphism of $\Mod_{\mathrm{F}}(\cA_X^{loc})$.
	
	In view of diagrams \eqref{diag:comunit} and \eqref{diag:comcounit}, the following diagrams are commutative
	\begin{equation*}
	\xymatrix{
		p^\ast \cN \ar[r]^-{p^\ast \eta^\prime_\cN} \ar@/^2pc/[rr]^-{\id} 
		& p^\ast p_\ast p^\ast \cN \ar[r]^-{\varepsilon^\prime_{p^\ast\cN}}
		& p^\ast \cN\\
		p_{\C^\times}^\ast \cN \ar[r]_-{p^\ast_{\C^\times} \eta_\cN} \ar@{=}[u] 
		& p_{\C^\times}^\ast p_\ast^{\C^\times} p_{\C^\times}^\ast \cN \ar[r]_-{\varepsilon_{p^\ast_{\C^\times}\cN}} \ar[u]^-{p^\ast i_{p^\ast_{\C^\times}\!\!\cN}}
		& p_{\C^\times}^\ast \cN  \ar@{=}[u]
	}
	\end{equation*}

	\begin{equation*}
	\xymatrix{
		p_\ast \cM \ar[r]^-{ \eta^\prime_{p_\ast\cM} } \ar@/^2pc/[rr]^-{\id} 
		& p_\ast p^\ast p_\ast \cM \ar[r]^-{p_\ast \varepsilon^\prime_\cM}
		& p_\ast \cM\\
		&p_\ast p_{\C^\times}^\ast p_\ast^{\C^\times} \cM \ar[u]^-{p_\ast p^\ast i_\cM} \ar[ru]^-{p_\ast \varepsilon_\cM}&\\
		p^{\C^\times}_\ast \cM \ar[r]_-{\eta_{p_\ast^{\C^\times}\cM}} \ar@{^{(}->}[uu]^-{i_\cM} \ar[ru]^-{\eta^\prime_{p_\ast^{\C^\times}\cM}}
		& p^{\C^\times}_\ast p^\ast_{\C^\times} p^{\C^\times}_\ast \cM \ar[r]_-{p_\ast^{\C^\times} \varepsilon_\cM} \ar[u]_-{i_{p^\ast_{\C^\times} p^{\C^\times}_\ast\cM}}
		& p^{\C^\times}_\ast \cM  \ar@{^{(}->}[uu]_-{i_\cM}
	}
	\end{equation*}
which proves that $(p^\ast_{\C^\times}, p_\ast^{\C^\times})$ is an adjoint pair.\\
\end{proof}

\subsection{Coherence of the sheaf of invariant sections}

In this subsection, we prove the coherence of $\cB_Y$. The following result is elementary.

\begin{lemma}\label{lem:derivsol}
	Let $x \in X$. Let $v_0$ be an holomorphic derivation of $\cO_X$ that does not  vanish at $x$. Then there exists a neighborhood $U$ of $x$ such that for every $c \in \C$ the map
	\begin{equation*}
	v_0 + c \colon \cO_X(U) \to \cO_X(U)\, ,\quad f\mapsto v_0(f)+cf
	\end{equation*}
	is surjective. 
\end{lemma}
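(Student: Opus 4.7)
The plan is to reduce to a normal form for the derivation $v_0$ via the flow box (straightening) theorem, and then solve the equation $(v_0+c)f=g$ by an explicit integrating factor. Crucially, the neighborhood $U$ may be chosen independently of $c$; the parameter $c$ only enters through the explicit formula for the solution.

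First, since $v_0$ is a holomorphic derivation of $\cO_X$ which does not vanish at $x$, the corresponding holomorphic vector field is non-zero at $x$, and the flow box theorem for holomorphic vector fields yields local coordinates $(z_1,\ldots,z_n)$ centered at $x$ on some open neighborhood of $x$ in which $v_0=\partial/\partial z_1$. Shrinking this neighborhood, we may assume that it contains, and we may replace it by, an open polydisk $U=D_1\times\cdots\times D_n$ centered at $0$ (in these coordinates). Since $D_1$ is convex, any two points of $D_1$ can be joined by the straight line path inside $D_1$.

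Next, given $c\in\C$ and $g\in\cO_X(U)$, I define
\begin{equation*}
f(z_1,\ldots,z_n):=e^{-cz_1}\int_{0}^{z_1} e^{cw}\,g(w,z_2,\ldots,z_n)\,dw,
\end{equation*}
where the integral is taken along the straight line segment in $D_1$ from $0$ to $z_1$. Standard arguments (continuity of the integrand in all variables, holomorphy in the parameters, and the fundamental theorem of calculus in the complex variable $z_1$) show that $f$ is holomorphic on $U$, hence $f\in\cO_X(U)$. Differentiating under the integral sign,
\begin{equation*}
\partiel{f}{z_1}=-c\,e^{-cz_1}\int_0^{z_1}e^{cw}g(w,z_2,\ldots,z_n)\,dw + e^{-cz_1}\cdot e^{cz_1}g(z_1,\ldots,z_n)=-cf+g,
\end{equation*}
so that $(v_0+c)(f)=v_0(f)+cf=g$. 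This establishes surjectivity of $v_0+c$ on $\cO_X(U)$ for every $c\in\C$.

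There is no serious obstacle in this argument: the flow box theorem handles the geometric content (rectification of $v_0$) once and for all, and the explicit integrating factor $e^{cz_1}$ gives a uniform construction of solutions for every $c$. The only mild point to keep in mind is that the path of integration must stay in the domain of holomorphy of $g$; this is why one chooses $U$ to be a polydisk (or any convex neighborhood), so that the straight line from $0$ to $z_1$ lies in $D_1$ for every $z_1\in D_1$.
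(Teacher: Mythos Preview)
Your proof is correct. The paper does not actually give a proof of this lemma, stating only that the result is elementary; your argument via the flow-box theorem and an explicit integrating factor is a standard and complete way to fill this in, and the straightening $v_0\equiv\partial/\partial z_1$ is precisely the normalization the paper itself uses later (in the proof of Theorem~\ref{thm:invariantgeneration}).
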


\begin{lemma}\label{lem:hfactor}
	Let $s \in \cA_X(X)$ and assume that there is a covering $(U_i)_{i \in I}$ of $X$ and $u_i \in \cA_{X}(U_i)$ such that $s|_{U_i}=\hbar u_i$. Then, there exists $u \in \cA_X(X)$ such that $s=\hbar u$  
\end{lemma}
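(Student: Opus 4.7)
The plan is to use the local-to-global principle for sheaves, once we know that $\hbar$ acts without zero divisors on $\cA_X$.

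First, I would record the preliminary observation that the map $\hbar \cdot : \cA_X \to \cA_X$ is injective. This is a local question, and on any open set where $\cA_X$ is isomorphic to a star-algebra $(\cO_X^\hbar, \star)$, the action of $\hbar$ is just multiplication by $\hbar$ in $\varprojlim_n \cO_X \otimes_\C (\C^\hbar/\hbar^n\C^\hbar)$, which is visibly injective because $\cO_X$ is torsion-free over $\C$.

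Next, given the covering $(U_i)_{i \in I}$ and the local sections $u_i \in \cA_X(U_i)$ satisfying $s|_{U_i} = \hbar u_i$, I would observe that on any overlap $U_i \cap U_j$ we have
\begin{equation*}
\hbar (u_i|_{U_i \cap U_j} - u_j|_{U_i \cap U_j}) = s|_{U_i \cap U_j} - s|_{U_i \cap U_j} = 0,
\end{equation*}
so by the injectivity of multiplication by $\hbar$, $u_i|_{U_i \cap U_j} = u_j|_{U_i \cap U_j}$. The sheaf axiom for $\cA_X$ then provides a unique $u \in \cA_X(X)$ with $u|_{U_i} = u_i$ for every $i \in I$.

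Finally, $\hbar u$ and $s$ are two global sections of $\cA_X$ whose restrictions to each member of the cover $(U_i)_{i \in I}$ agree, namely both equal $\hbar u_i = s|_{U_i}$. By the separation axiom for the sheaf $\cA_X$, it follows that $s = \hbar u$ in $\cA_X(X)$, which concludes the proof. There is no real obstacle: the only non-trivial ingredient is the $\hbar$-torsion-freeness of $\cA_X$, and everything else is an application of the sheaf axioms.
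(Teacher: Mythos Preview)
Your proof is correct and follows exactly the same approach as the paper: the paper also uses that $\hbar$ is not a zero divisor to conclude $u_i|_{U_{ij}}=u_j|_{U_{ij}}$ and then glues. Your write-up is simply more detailed, spelling out the justification for $\hbar$-torsion-freeness and the sheaf axioms that the paper leaves implicit.
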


\begin{proof}
	On $U_i \cap U_j$, $\hbar u_i \vert_{U_{ij}}=\hbar u_j \vert_{U_{ij}}$. As $\hbar$ is not a zero divisor, $u_i \vert_{U_{ij}}= u_j \vert_{U_{ij}}$.
\end{proof}

\begin{lemma}\label{lem:epiinv1}
	Let $x \in X$. There exist an equivariant neighborhood $V$ of $x$ and a section $s \in \cA_X(V)$ such that $v(s)=0$ and $s=\hbar u$ with $u \in \cA_X(V)$ an invertible section.
\end{lemma}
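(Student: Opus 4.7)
The plan is to construct $s$ in the form $s = \hbar u$ where $u \in \cA_X$ is an invertible solution to $v(u) = -mu$ on a neighborhood of $x$, and then to extend invariantly to an equivariant neighborhood. The reduction to this equation comes from $v(\hbar) = m\hbar$ (obtained by differentiating axiom (b) of the $\mathrm{F}$-action at $t=1$), which gives $v(\hbar u) = \hbar(v(u) + mu)$, so that $v(s) = 0$ is equivalent to $v(u) + mu = 0$.

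First I would work locally in a star-algebra presentation of $\cA_X$ near $x$, and expand $u = \sum_{j \geq 0} \hbar^j u_j$ with $u_j \in \cO_X$ and $v = \sum_{k \geq 0} \hbar^k v_k$ with the $v_k$ differential operators (as supplied by the opening lemma of the paper). The equation $v(u) = -mu$ then decouples into the triangular recursive system
\[
v_0(u_n) + m(n+1)\,u_n \;=\; -\sum_{\substack{i+k=n \\ k \neq 0}} v_k(u_i), \qquad n \geq 0,
\]
in which the right-hand side at step $n$ depends only on $u_0,\ldots,u_{n-1}$. Because the $\C^\times$-action is free, $v_0$ is nowhere vanishing near $x$, so I can choose local coordinates $(z_1,\ldots,z_d)$ in which $v_0 = \partial_{z_1}$. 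I would take $u_0 := e^{-m z_1}$, which is an invertible solution of the homogeneous $n=0$ equation, and then appeal to Lemma \ref{lem:derivsol} with $c = m(n+1)$ to produce $u_n \in \cO_X(U)$ at each subsequent step. Assembling $u = \sum_j \hbar^j u_j \in \cA_X(U)$ via $\hbar$-adic completeness, $u$ is invertible because its symbol $\sigma_0(u) = u_0$ is invertible on $U$, and then $s := \hbar u \in \cA_X(U)$ satisfies $v(s) = 0$.

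Finally I would upgrade $U$ to an equivariant neighborhood. The equality $v(s) = 0$ combined with Lemma \ref{lem:extensionderive} shows that $s$ is locally invariant on $U$; applying the extension procedure of Lemma \ref{lem:extension}(i) to $\cA_X$ viewed with its canonical $\mathrm{F}$-action then produces an invariant section $\tilde s$ on an equivariant neighborhood $V = p^{-1}(p(U'))$ of $x$. That this extension retains the form $\hbar \cdot (\text{invertible})$ follows from $\theta_{t_0}^{-1}(\hbar) = t_0^{-m}\hbar$: for $y = \mu(t_0,x_0)$ one has $\tilde s_y = \theta_{t_0}^{-1}(\hbar u_{x_0}) = \hbar \cdot (t_0^{-m}\theta_{t_0}^{-1}(u_{x_0}))$, and $\theta_{t_0}^{-1}$, being a ring automorphism, preserves invertibility of the second factor.

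The main technical point—and what I expect to be the principal obstacle—is ensuring that the inductive construction in the second paragraph takes place on a \emph{fixed} neighborhood rather than on a shrinking sequence of them. This is precisely what the uniformity in $c$ built into Lemma \ref{lem:derivsol} gives, once $v_0$ has been straightened to a coordinate vector field; without that uniform statement, one would need an independent compactness or direct integral-formula argument to keep $U$ fixed across all $n$.
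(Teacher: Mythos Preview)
Your proposal is correct and follows essentially the same route as the paper: both reduce $v(\hbar u)=0$ to the same triangular system $v_0(u_n)+m(n+1)u_n=-\sum_{k\neq 0}v_k(u_{n-k})$, solve it on a fixed neighborhood via Lemma~\ref{lem:derivsol} after straightening $v_0$ to $\partial_{z_1}$, and then extend to an equivariant open set using Lemma~\ref{lem:extension}. The only cosmetic differences are that the paper obtains $u_0$ by posing a Cauchy problem with boundary value $1$ on a non-characteristic hypersurface (whose solution is of course your $e^{-m(z_1-c)}$), and that it invokes Lemma~\ref{lem:hfactor} explicitly to glue the local factorizations $\tilde s_y=\hbar\cdot(\text{invertible})$ into a single $\tilde u\in\cA_X(V)$, whereas you leave that gluing implicit (it follows since $\hbar$ is a non-zero-divisor).
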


\begin{proof}
	Let $x \in X$ and $m \in \Z$. Since $v_0$ does not vanish in $x$, there exists an open neighborhood $U$ of $x$ and an hypersurface $S$ in $U$ which is non-characteristic for $v_0+m$ on $U$. Hence, the following Cauchy problem
	\begin{equation*}
	\begin{cases}
	v_0(f)+mf=0\\
	f \vert_S=1
	\end{cases}
	\end{equation*}
	has a solution $f_0$ in an open neighborhood $U^\prime$ of $x$. Shrinking $U^\prime$ if necessary we can further assume that $f_0$ is invertible on it.
	
	We now show that there exists $u=\sum_{i \geq 0} \hbar^i u_i \in \cA_{X,x}$ such that
	\begin{align*}
	u_0= f_0  \quad  \textnormal{and} \quad v(\hbar u)=0. 
	\end{align*}
	It follows from equation \eqref{eq:basederivation} that this is equivalent to show that there exists $u=\sum_{i \geq 0} \hbar^i u_i \in \cA_{X,x}$ such that for every $n \in \N$
	\begin{equation*}
	\sum_{i+k=n} v_k(u_i)+ m \,(n+1) \,u_n=0.
	\end{equation*}
	Thus, it remains to show that the recursive system
	\begin{equation*}
	\begin{cases}
	v_0(u_n)+ m \,(n+1) \,u_n=\sum_{\substack{i+k=n \\ k\neq 0}}v_k(u_i)\\
	u_0=f_0
	\end{cases}
	\end{equation*}
	has a solution $u$ in an open neighborhood $V^\prime$ of $x$. This follows from Lemma \ref{lem:derivsol}. Finally using Lemma \ref{lem:extension}, we extend the section $\hbar u$ to an invariant section $s$ defined on an equivariant open set $V$.

	Let us check that there exists an invertible section $\widetilde{u} \in \cA_X(V)$ such that $s=\hbar \widetilde{u}$.
	Let $y \in V$. By construction (see the proof of Lemma \ref{lem:epiinv1}), $s_y=\phi_{t_0}^{-1}(\hbar u_{x_0})= \hbar \, t_0^{-m} \phi_{t_0}^{-1}(u_{x_0})$ and $t_0^{-m} \phi_{t_0}^{-1}(u_{x_0})$ is an invertible section. Finally, Lemma \ref{lem:hfactor} implies the existence of $\widetilde{u}$.
\end{proof}

Assume that there exists a section $s \in \cA_X(X)$ such that $v(s)=0$ and $s=\hbar u$ with u invertible. Consider the exact sequence
\begin{equation}\label{diag:seqcom}
0 \to s^n \cA_X \to \cA_X \stackrel{\pi}{\to} \cA_X / s^n \cA_X \to 0
\end{equation}
and apply the functor $(\cdot)^{\C^\times}$ to it. We obtain the exact sequence 
\begin{equation}\label{diag:seqcompreinv}
0 \to (s^n \cA_X)^{\C^\times} \to \cA_X^{\C^\times} \to (\cA_X / s^n \cA_X)^{\C^\times}.
\end{equation}

\begin{lemma}
	The left exact sequence \eqref{diag:seqcompreinv} is exact.
\end{lemma}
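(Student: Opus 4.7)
To prove this, it suffices to show that the map $\cA_X^{\C^\times} \to (\cA_X / s^n \cA_X)^{\C^\times}$ is an epimorphism of sheaves; this is a local problem at each $x \in V$. The key reduction is that since $s = \hbar u$ with $u \in \cA_X(V)$ invertible and $\hbar$ central in $\cA_X$, one has $s^n \cA_X = \hbar^n u^n \cA_X = \hbar^n \cA_X$, so that the sequence \eqref{diag:seqcom} coincides with the $\hbar$-adic short exact sequence
\begin{equation*}
0 \to \hbar^n \cA_X \to \cA_X \to \cA_X / \hbar^n \cA_X \to 0.
\end{equation*}

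Given a locally invariant section $\bar{a}$ of $\cA_X/\hbar^n\cA_X$ on an open neighborhood $U$ of $x$, I lift it arbitrarily to $a \in \cA_X(U)$; then $v(\bar a)=0$ translates to $v(a) \in \hbar^n \cA_X(U)$, say $v(a) = \hbar^n c$ with $c \in \cA_X(U)$. The plan is to produce $b = \hbar^n b' \in \hbar^n \cA_X$ on a possibly smaller neighborhood of $x$ with $v(b) = v(a)$, so that $a-b$ is the desired invariant lift. Using that $v$ is a derivation together with $v(\hbar^n) = nm\hbar^n$ (a consequence of $v(\hbar)=m\hbar$), the condition $v(\hbar^n b') = \hbar^n c$ simplifies, after cancelling the non-zero-divisor $\hbar^n$, to the equation
\begin{equation*}
v(b') + nm\, b' = c.
\end{equation*}

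Shrinking $U$ so that $\cA_X|_U$ is isomorphic to a star algebra $(\cO_X^\hbar|_U, \star)$ and writing $b' = \sum_{l \geq 0}\hbar^l b'_l$ and $c = \sum_{l \geq 0} \hbar^l c_l$ with $b'_l, c_l \in \cO_X$, formula \eqref{eq:basederivation} turns this equation into the recursive system
\begin{equation*}
v_0(b'_l) + m(l+n)\, b'_l = c_l - \sum_{k \geq 1} v_k(b'_{l-k}), \qquad l \geq 0.
\end{equation*}
The freeness of the $\C^\times$-action forces $v_0$ not to vanish at $x$, and Lemma \ref{lem:derivsol} then provides a single neighborhood $U' \subset U$ of $x$ on which $v_0 + \gamma$ is surjective on $\cO_X$ for \emph{every} constant $\gamma \in \C$. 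Solving the recursion on $U'$ yields $b'_l \in \cO_X(U')$ for all $l$, and hence $b' \in \cA_X(U')$ via the $\hbar$-adic presentation; the section $a|_{U'} - \hbar^n b'$ then lies in $\cA_X^{\C^\times}(U')$ and lifts $\bar a|_{U'}$. The only real subtlety is the uniformity in $l$, that is, having \emph{one} common neighborhood on which all the equations $(v_0 + m(l+n))(b'_l) = \cdots$ can be solved simultaneously — this is precisely the strength of Lemma \ref{lem:derivsol}.
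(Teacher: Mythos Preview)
Your proof is correct and is essentially the argument the paper has in mind: the paper simply says ``the proof is similar to the one of Lemma~\ref{lem:epiinv1}'', and what you have written is precisely that similar proof---reduce via $s^n\cA_X=\hbar^n\cA_X$, translate the lifting problem into the recursive system $v_0(b'_l)+m(l+n)b'_l=\cdots$, and solve it on a common neighborhood using Lemma~\ref{lem:derivsol} and the freeness of the action. The only cosmetic difference is that in Lemma~\ref{lem:epiinv1} the paper phrases the base step as a Cauchy problem (to secure invertibility of the leading term), whereas here no invertibility is needed and your direct appeal to Lemma~\ref{lem:derivsol} for every $l$ suffices.
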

\begin{proof}
	The proof is similar to the one of Lemma \ref{lem:epiinv1}.
\end{proof}

\begin{lemma}\label{lem:quotientcommutation}
	Assume that there exists a section $s \in \cA_X(X)$ such that $v(s)=0$ and $s=\hbar u$ with u invertible. Then
	\begin{equation*}
	p_\ast^{\C^\times}(\cA_X / s^n \cA_X)\simeq \cB_Y(0)/s^n \cB_Y(0).
	\end{equation*}
\end{lemma}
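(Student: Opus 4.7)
I would construct a natural morphism $\varphi\colon\cB_Y(0)/s^n \cB_Y(0) \to p_\ast^{\C^\times}(\cA_X/s^n\cA_X)$ and show it is an isomorphism, exploiting the short exactness of \eqref{diag:seqcompreinv} furnished by the previous lemma. Applying $p_\ast^{\C^\times}$ to the surjection $\cA_X \twoheadrightarrow \cA_X/s^n\cA_X$ produces a morphism $\cB_Y(0) = p_\ast^{\C^\times}\cA_X \to p_\ast^{\C^\times}(\cA_X/s^n\cA_X)$; since $s$ is a global invariant section, this kills $s^n\cB_Y(0)$ and induces $\varphi$.

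For injectivity I would reason sectionwise. If $b \in \cB_Y(0)(V)$ maps to $0$, then $b = s^n c$ for some $c \in \cA_X(p^{-1}(V))$. Because $v$ is a derivation with $v(s)=0$, one has $v(s^n) = 0$ and hence $v(b) = s^n v(c)$. Since $\hbar$ is central and $s = \hbar u$ with $u$ invertible, $s^n = \hbar^n u^n$ with $u^n$ invertible, so $s^n$ is not a zero divisor in $\cA_X$. Thus $v(b)=0$ forces $v(c)=0$, so $c \in \cA_X^{\C^\times}(p^{-1}(V)) \simeq \cB_Y(0)(V)$ via the isomorphism \eqref{map:Binv}, and consequently $b \in s^n \cB_Y(0)(V)$.

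For surjectivity I would work stalkwise at $y \in Y$. A stalk element of $p_\ast^{\C^\times}(\cA_X/s^n\cA_X)_y$ is represented by an invariant class $\bar a \in (\cA_X/s^n\cA_X)^{\C^\times}(p^{-1}(V))$ for some open $V \ni y$. Short exactness of \eqref{diag:seqcompreinv} provides, after shrinking, a lift $a \in \cA_X^{\C^\times}(U)$ on some open $U$ of $p^{-1}(V)$ meeting the fiber above $y$. Picking a local trivialization $p^{-1}(V) \simeq V \times \C^\times$ of the principal bundle and exploiting the $\C^\times$-invariance of $a$, one propagates the lift along orbits by the explicit formula of Lemma \ref{lem:extension}(i), obtaining a section of $\cA_X^{\C^\times}$ defined on a saturated neighborhood $p^{-1}(V^\prime)$ of $y$. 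Under $\cA_X^{\C^\times}\simeq p^{-1}\cB_Y(0)$ this section corresponds to an element of $\cB_Y(0)(V^\prime)$ whose class modulo $s^n\cB_Y(0)$ recovers $\bar a$ on $p^{-1}(V^\prime)$.

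The main obstacle is the passage in the surjectivity step from the purely local-on-$X$ lift granted by the previous lemma to a lift defined on a saturated open $p^{-1}(V^\prime)$, as required to produce an honest section of $\cB_Y(0)$ over a neighborhood of $y \in Y$. This is overcome by the principal bundle nature of $p$: connectedness of the fiber $\C^\times$ implies that $\C^\times$-invariant sections on a saturated open descend uniquely to sections on the quotient, equivalently that $p_\ast p^{-1} \simeq \id$ on $\Mod(\cB_Y)$, which is precisely the mechanism that turns the locally defined invariant lift of $\bar a$ into a section of $\cB_Y(0)$ around $y$.
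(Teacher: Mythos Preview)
Your proposal is correct and follows essentially the same route as the paper. The paper phrases injectivity by first identifying $p_\ast^{\C^\times}(s^n\cA_X)\simeq s^n\cB_Y(0)$ and invoking left exactness of $p_\ast^{\C^\times}$, which unwinds to exactly your computation $v(s^n c)=s^n v(c)$; for surjectivity the paper does precisely what you do---lift locally via the exactness of \eqref{diag:seqcompreinv}, then propagate to a saturated open via Lemma~\ref{lem:extension}, and finally observe that the image of the extended invariant lift agrees with the original invariant class because two invariant sections coinciding on a slice coincide on its orbit.
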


\begin{proof}
	Notice that $p_\ast s^n \cA_X \simeq s^n p_\ast \cA_X$. Hence, $p^{\C^\times}_\ast( s^n \cA_X) \simeq s^n \cB_Y(0)$.
	
	Applying $p_\ast^{\C^\times}$ to the sequence \eqref{diag:seqcom}, we obtain the following left exact sequence
	\begin{equation}\label{diag:seqcominv}
	0 \to s^n \cB_Y(0) \to \cB_Y(0) \stackrel{p_\ast^{\C^\times}\pi}{\longrightarrow}  p_\ast^{\C^\times}(\cA_X / s^n  \cA_X).
	\end{equation}
	Let us show that $p_\ast^{\C^\times}\pi$ is surjective. Let $y \in Y$, $a \in p_\ast^{\C^\times}(\cA_X / s^n  \cA_X)_y$. Thus, there exists an open subset $V \subset Y$ such that $y \in V$, $a \in \cA_X / s^n  \cA_X(U)$ where $U=p^{-1}(V)$ and $a$ is an invariant section. Choose $x \in U$ such that $y=p(x)$. Since the sequence \eqref{diag:seqcompreinv} is exact, there exists an open set $W \subset U$ containing $x$ and a locally invariant section $u \in \cA_X(W)$ such that $\pi(u)=a\vert_W$. Hence, there exists an equivariant open subset $U^\prime$ and an invariant section $b \in \cA_X(U^\prime)$ such that on a neighborhood $W^\prime \subset W \cap U^\prime$ of $x$, $b\vert_{W^\prime}=u\vert_{W^\prime}$. Moreover, shrinking $U^\prime$ if necessary, we can assume that the orbit of $W^\prime$ under the action of $\C^\times$ is $U^\prime$. As $a$ and $b$ are invariant sections, it follows that $\pi(b)=a\vert_{U^\prime}$. This proves that $p^\ast_{\C^\times}\pi$ is an epimorphism of sheaves.
\end{proof}

The following theorem is a minor variation of \cite[Theorem 1.2.5 (i)]{KS3} and the proof is the same.

\begin{theorem}\label{thm:notherianite}
	Let $k$ be a field of characteristic zero, $\cR$ a sheaf of $k$-algebras and $s$ a section of $\cR$ such that
	\begin{enumerate}[(i)]
		\item $s \cR = \cR s$,
		\item $\cR \stackrel{ \cdot s}{\longrightarrow} \cR$ is a monomorphism,
		\item $\cR \simeq \underset{n \in \N}{\varprojlim} \cR / \cR s^n $
		\item $\cR / \cR s$ is a left Noetherian ring.
	\end{enumerate}
	Then $\cR$ is a left Noetherian $k$-algebra.
\end{theorem}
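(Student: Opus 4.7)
The plan is to emulate the classical argument that a complete filtered $k$-algebra with Noetherian residue ring is Noetherian, adapted to the sheaf-theoretic setting as in the proof of \cite[Theorem 1.2.5 (i)]{KS3}. Since the conclusion is local on $X$, I would fix a point of $X$ and work in a sufficiently small neighborhood throughout, and reduce the problem to showing that every coherent left sub-$\cR$-module $\cI$ of a free module $\cR^r$ of finite rank is locally finitely generated.

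First, pass to the quotient. Let $\bar\cI$ denote the image of $\cI$ in $(\cR/\cR s)^r$. By hypothesis (iv), this quotient is Noetherian, so on a sufficiently small open set there exist finitely many sections $\bar e_1,\ldots,\bar e_n$ generating $\bar\cI$; lift them to sections $e_1,\ldots,e_n$ of $\cI$. I claim these $e_i$ generate $\cI$. Note that hypotheses (i) and (ii) together imply that multiplication by $s^k$ induces an isomorphism of $(\cR/\cR s)$-modules between $(\cR/\cR s)^r$ and $(s^k\cR/s^{k+1}\cR)^r$, so a finite generating family modulo $\cR s$ induces a finite generating family of each graded piece.

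Next, for a given $a\in \cI$, build by induction on $N$ families $(r_i^{(k)})_{0\le k\le N,\, 1\le i\le n}$ of sections of $\cR$ such that
\[
a \;-\; \sum_{k=0}^{N}\sum_{i=1}^{n} s^k r_i^{(k)} e_i \;\in\; s^{N+1}\cR^r \cap \cI .
\]
The inductive step uses the isomorphism above: the class of the remainder modulo $s^{N+2}\cR^r$ is an element of $(s^{N+1}\cR/s^{N+2}\cR)^r$ that, under $s^{-(N+1)}$, corresponds to an element of $\bar\cI$, hence can be written as an $(\cR/\cR s)$-combination of $\bar e_1,\ldots,\bar e_n$; lift the coefficients to $\cR$ to obtain the $r_i^{(N+1)}$. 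The partial sums $\sum_{k=0}^{N} s^k r_i^{(k)}$ form Cauchy sequences in the $s$-adic topology, and the completeness hypothesis (iii) produces limits $r_i \in \cR$ with $a = \sum_i r_i e_i$. This shows $\cI$ is locally finitely generated, giving the required Noetherianity.

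The delicate point, and the main obstacle, is the inductive step: one must ensure that the remainder at each stage belongs not merely to $s^{N+1}\cR^r$ but to $\cI \cap s^{N+1}\cR^r$, so that the lifted coefficients really produce a combination of the $e_i$ and not of arbitrary sections. This is handled by fixing the generators $\bar e_i$ of $\bar\cI$ once and for all and systematically using the isomorphism $\cR/\cR s \xrightarrow{\sim} s^k\cR/s^{k+1}\cR$ provided by (i) and (ii), which plays the role of an Artin--Rees lemma in this setting. Once the graded generating property is propagated through all $s$-adic layers, assembling the answer is routine completeness.
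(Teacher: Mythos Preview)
Your overall strategy is the right one and matches the argument the paper defers to (\cite[Theorem 1.2.5 (i)]{KS3}), but the inductive step as written does not go through, and the ``delicate point'' is not the one you identify. The remainder $a-\sum_{k\le N}\sum_i s^k r_i^{(k)} e_i$ is automatically in $\cI$, since $a$ and the $e_i$ are. The actual gap is the assertion that its class in $(s^{N+1}\cR/s^{N+2}\cR)^r$, transported back via $s^{-(N+1)}$, lies in $\bar\cI$. This is false in general: take $\cR=k[[x]]$, $s=x$, $r=1$, and $\cI=x\cR$. Then $\bar\cI=0$, so your procedure selects no generators $e_i$ at all, yet $\cI\neq 0$. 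Concretely, with $a=x$ the remainder after step $0$ is $a=s\cdot 1$, and $\bar 1\notin\bar\cI$. The isomorphism $\cR/\cR s \xrightarrow{\sim} s^k\cR/s^{k+1}\cR$ carries $\bar\cI$ \emph{into} the image of $\cI\cap s^k\cR^r$, but not onto it; it is not an Artin--Rees substitute.

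The repair is to work with the full $s$-adic associated graded rather than the single layer $\bar\cI$. Hypotheses (i) and (ii) identify $\gr_s\cR=\bigoplus_{k\ge 0} s^k\cR/s^{k+1}\cR$ with a (skew) polynomial ring over $\cR/s\cR$ in the class of $s$, which is left Noetherian by (iv) together with the Hilbert basis theorem for Ore extensions. Hence $\gr_s\cI\subset(\gr_s\cR)^r$ is locally finitely generated by homogeneous elements; lift these to sections $f_j\in\cI\cap s^{d_j}\cR^r$. Now the successive approximation runs cleanly: for $b\in\cI\cap s^{N}\cR^r$ the symbol of $b$ in $\gr_s\cI$ is by construction a $\gr_s\cR$-combination of the symbols of the $f_j$, so one strips one $s$-layer at a time and assembles the limit using (iii). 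This is exactly the argument of \cite{KS3} with $\hbar$ replaced by $s$.
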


\begin{theorem}\label{thm:cohinv}
	\begin{enumerate}[(i)]
		\item The sheaf $\cB_Y(0)$ is a Noetherian sheaf of $\C$-algebras.
		
		\item The sheaf $\cB_Y$ is a Noetherian sheaf of $\C$-algebras.
		
		\item The sheaf $\cA_X^{\C^\times}$ is a Noetherian sheaf of $\C$-algebras.
		
		\item The sheaf $\cA_X^{loc, \C^\times}$ is a Noetherian sheaf of $\C$-algebras.
	\end{enumerate}
\end{theorem}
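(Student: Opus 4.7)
The plan is to prove (i) first by applying Theorem \ref{thm:notherianite} to $\cB_Y(0)$ with the section $s = \hbar u$ furnished locally by Lemma \ref{lem:epiinv1}, and then to deduce (ii)--(iv) from this case.

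For (i), the question being local on $Y$, I fix $y \in Y$, pick $x \in p^{-1}(y)$, and apply Lemma \ref{lem:epiinv1} to produce an equivariant neighborhood $V$ of $x$ together with an invariant section $s \in \cA_X(V)$ of the form $s = \hbar u$ with $u$ invertible. I then verify the four hypotheses of Theorem \ref{thm:notherianite} for $\cR = \cB_Y(0)|_{p(V)}$. Condition (ii) is immediate since $\hbar$ is not a zero divisor and $u$ is invertible. For condition (iv), Lemma \ref{lem:quotientcommutation} yields $\cB_Y(0)/\cB_Y(0) s \simeq p_\ast^{\C^\times}(\cA_X/\hbar\cA_X)$; since modulo $\hbar$ the F-action reduces to the $\C^\times$-action $\mu$ on $\cO_X$, this quotient equals $\cO_Y$, which is Noetherian. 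For condition (iii), Lemma \ref{lem:quotientcommutation} applied with $s^n$ in place of $s$ (the proof being identical) gives $\cB_Y(0)/s^n\cB_Y(0) \simeq p_\ast^{\C^\times}(\cA_X/\hbar^n\cA_X)$; by Proposition \ref{prop:adjunction}, $p_\ast^{\C^\times}$ is a right adjoint and therefore commutes with inverse limits, so the inverse limit of these quotients is $p_\ast^{\C^\times}\cA_X = \cB_Y(0)$.

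The substantive step is condition (i), $s\cB_Y(0) = \cB_Y(0) s$. From $\theta_t(s)=s$ and $\theta_t(\hbar)=t^m\hbar$ one extracts $\theta_t(u)=t^{-m}u$; then for any $b \in \cB_Y(0)$ one computes
\[
\theta_t(u b u^{-1}) = t^{-m}\,u\,b\,t^{m}\,u^{-1} = u b u^{-1},
\]
and similarly $\theta_t(u^{-1}bu) = u^{-1}bu$, so conjugation by $u^{\pm 1}$ preserves invariants. Since $\hbar$ is central, this gives $s\cB_Y(0) = \hbar\, u\cB_Y(0) = \hbar\, \cB_Y(0)\, u = \cB_Y(0) s$. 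This is where the argument does real work; all the other verifications are formal consequences of Lemmas \ref{lem:epiinv1}, \ref{lem:quotientcommutation} and Proposition \ref{prop:adjunction}.

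Part (ii) follows from the observation that, locally on $Y$, every invariant section of $\cA_X^{loc}$ is of the form $\hbar^{-n}a = s^{-n}(u^n a)$ with $u^n a$ invariant (by the same weight computation), so $\cB_Y$ is the Ore localization $\cB_Y(0)[s^{-1}]$, which remains Noetherian. For parts (iii) and (iv), the isomorphisms $\cA_X^{\C^\times}\simeq p^{-1}\cB_Y(0)$ and $\cA_X^{loc,\C^\times}\simeq p^{-1}\cB_Y$ established by the unnamed lemma preceding the statement identify these sheaves with pullbacks along the locally trivial $\C^\times$-principal bundle $p$, so their Noetherianity follows from that of $\cB_Y(0)$ and $\cB_Y$; alternatively, and more self-contained, the argument of part (i) goes through verbatim on $X$ in place of $Y$, using the same local section $s$ and the $\C^\times$-invariants functor $(\cdot)^{\C^\times}$ in place of $p_\ast^{\C^\times}$.
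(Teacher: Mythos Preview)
Your proof is correct and tracks the paper's argument closely. For part (i) you apply Theorem~\ref{thm:notherianite} exactly as the paper does; your verification that $s\cB_Y(0)=\cB_Y(0)s$ via the weight computation $\theta_t(u)=t^{-m}u$ actually supplies detail the paper omits (it simply declares conditions (i) and (ii) ``immediately satisfied''). For part (ii) there is a minor divergence: the paper realizes $\cB_Y$ as a quotient of the skew polynomial ring $\cB_Y(0)[T;\psi_u]$ by the ideal $(Ts-1)$, invoking \cite[Theorem~5.1.1]{DaKas} for Noetherianity of the skew polynomial ring and \cite[Proposition~A.10]{KDmod} for passage to the quotient, whereas you identify $\cB_Y$ directly with the Ore localization $\cB_Y(0)[s^{-1}]$. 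Your route is more elementary---the Ore condition is immediate from $s\cB_Y(0)=\cB_Y(0)s$, and Noetherianity is preserved under Ore localization---while the paper's has the advantage of citing off-the-shelf results. Parts (iii) and (iv) are handled identically in both accounts, via the isomorphisms $\cA_X^{\C^\times}\simeq p^{-1}\cB_Y(0)$ and $\cA_X^{loc,\C^\times}\simeq p^{-1}\cB_Y$; the paper cites \cite[Proposition~A.14]{KDmod} for the fact that $p^{-1}$ preserves Noetherianity, which is the content behind your ``locally trivial bundle'' remark.
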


\begin{proof}
	\noindent (i) We apply Theorem \ref{thm:notherianite}. Coherency is a local property. Hence, using Lemma \ref{lem:epiinv1}, we can assume that that there exists a section $s \in \cA_X(X)$ such that $v(s)=0$ and $s= \hbar u$ with $u$ invertible. Then point (i) and (ii) of Theorem \ref{thm:notherianite} are immediately satisfied. Since $p_\ast^{\C^\times}$ is a right adjoint it commutes with limits. Thus,
	\begin{equation*}
	\cB_Y(0)=p_\ast^{\C^\times} \cA_X=p_\ast^{\C^\times} (\varprojlim_n \cA_X / s^n \cA_X)\simeq \varprojlim_n p_\ast^{\C^\times} (\cA_X / s^n \cA_X).
	\end{equation*}
	Moreover, Lemma \ref{lem:quotientcommutation} implies that $p_\ast^{\C^\times} (\cA_X / s^n \cA_X) \simeq \cB_Y(0)/s^n \cB_Y(0)$. Hence,
	\begin{equation*}
	\cB_Y(0) \simeq \varprojlim_n \cB_Y(0)/s^n \cB_Y(0).
	\end{equation*}
	This proves that condition (iii) of Theorem \ref{thm:notherianite} holds as well as condition (iv) since $\cB_Y(0)/s \cB_Y(0) \simeq \cO_Y$. Thus, $\cB_Y(0)$ is Noetherian sheaf of $\C$-algebras.\\
	
	\noindent (ii) We keep the notation of (i) and consider a section $s=\hbar \, u$ as above. Consider the free algebra $\cB_Y(0)\langle T \rangle $ and impose the relations
	\begin{equation*}
	\forall a \in \cB_Y(0), \; T \cdot a = \psi_u(a) \cdot T
	\end{equation*}
	where $\psi_u(a)=u^{-1} a u$. We obtain the skew polynomial algebra $\cB_Y(0)[T; \psi_u]$. It follows from (i) and \cite[Theorem 5.1.1]{DaKas} that the ring $\cB_Y(0)[T; \psi_u]$ is Noetherian. Using \cite[Proposition A.10]{KDmod}, this implies that $\cB_Y \simeq \cB_Y(0)[T; \psi_u] / (Ts-1)\cB_Y(0)[T; \psi_u]$ is also Noetherian.\\
	
	\noindent (iii) \& (iv) We have that $\cA_X^{\C^\times} \simeq p^{-1} \cB_Y(0)$ (resp. $\cA_X^{loc, \C^\times} \simeq p^{-1} \cB_Y$). Hence the result follows from (i) (resp. (ii)) and \cite[Proposition A.14]{KDmod}.
\end{proof}

\begin{proposition}
	\begin{enumerate}[(i)]
		\item The sheaf $\cA_X$ is faithfully flat over $\cA_X^{\C^\times}$,
		\item the sheaf $\cA^{loc}_X$ is faithfully flat over $\cA_X^{loc,\C^\times}$.
	\end{enumerate}
\end{proposition}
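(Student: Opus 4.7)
The plan is to reduce both claims to a flatness-lifting criterion across the deformation parameter. The assertions are local on $X$, so by Lemma \ref{lem:epiinv1} I may restrict to an equivariant open set on which there exists a global invariant section $s = \hbar u \in \cA_X^{\C^\times}$ with $u$ invertible in $\cA_X$. Since $u$ is a unit, $s\cA_X = \cA_X s = \hbar \cA_X$, so $\cA_X$ is $s$-adically complete and $s$-torsion-free, with $\cA_X / s\cA_X \simeq \cO_X$. From the proof of Theorem \ref{thm:cohinv}(i) the analogous properties hold for $\cA_X^{\C^\times} \simeq p^{-1}\cB_Y(0)$, giving $\cA_X^{\C^\times} / s \cA_X^{\C^\times} \simeq p^{-1}\cO_Y$.

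The first step is to examine the reduction of the morphism $p^{-1}\cB_Y(0) \to \cA_X$ modulo $s$, which is the map of $\C$-algebras $p^{-1}\cO_Y \to \cO_X$. Since the action is free and proper, $p\colon X \to Y$ is a $\C^\times$-principal bundle, and in particular a smooth surjective submersion. Locally $X \simeq \C^\times \times V$ over a trivializing open $V \subset Y$, and over such a neighborhood $\cO_X$ is the external completed tensor product of $\cO_{\C^\times}$ with $p^{-1}\cO_V$, which is manifestly faithfully flat over $p^{-1}\cO_V$. Thus $\cO_X$ is faithfully flat over $p^{-1}\cO_Y$.

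The second step is to invoke the standard $\hbar$-adic lifting criterion of deformation quantization: a morphism $\cR \to \cS$ of sheaves of $\C^\hbar$-algebras that are both $s$-adically complete and $s$-torsion-free is (faithfully) flat as soon as $\cS/s\cS$ is (faithfully) flat over $\cR/s\cR$. The proof proceeds by induction on $s$-adic layers combined with the Mittag-Leffler/completeness property, in the spirit of the techniques of \cite[\S 1.4]{KS3}. Applied with $\cR = p^{-1}\cB_Y(0) \simeq \cA_X^{\C^\times}$ and $\cS = \cA_X$, this yields (i). For (ii), localize at $s$: locally $\cA_X^{loc} \simeq \cA_X[s^{-1}]$ and $p^{-1}\cB_Y \simeq (p^{-1}\cB_Y(0))[s^{-1}]$ (cf.\ the proof of Theorem \ref{thm:cohinv}(ii)), so (ii) follows from (i), since localization at a central non-zero-divisor preserves faithful flatness on both sides.

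The main obstacle I expect lies in the lifting step: one must verify that the two-sided ideals generated by $s$ behave well both in $\cA_X$ and in $\cA_X^{\C^\times}$, and that the corresponding $s$-adic filtrations are compatible with the inclusion $\cA_X^{\C^\times} \hookrightarrow \cA_X$. Invertibility of $u$ ensures this on $\cA_X$, while for $\cA_X^{\C^\times}$ one uses that $s$ is invariant under the infinitesimal action so that left and right multiplication by $s$ preserve invariance. Once this is in place, the inductive comparison of the graded pieces reduces to the faithful flatness modulo $s$ already established in the first step.
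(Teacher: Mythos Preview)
Your approach is essentially the one the paper has in mind: for (i) the paper simply says the proof is ``similar to point (i) of the proof of \cite[Lemma 6.1.2 (a)]{KS3}'', and that argument is precisely the $s$-adic (equivalently $\hbar$-adic) lifting of faithful flatness from the reduction $\cO_X$ over $p^{-1}\cO_Y$ that you spell out; for (ii) the paper uses the isomorphism $\cA_X^{loc}\simeq \cA_X\otimes_{\cA_X^{\C^\times}}\cA_X^{loc,\C^\times}$, which is exactly your localization-at-$s$ statement rewritten as a base change. One small correction: $s=\hbar u$ is not central in general, only normalizing ($s\cR=\cR s$, as used in Theorem~\ref{thm:notherianite} and Theorem~\ref{thm:cohinv}(ii)); this is still enough for Ore localization and for the filtered lifting argument, so the substance of your proof is unaffected.
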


\begin{proof}
	(i) The proof is similar to point (i) of the proof of \cite[Lemma 6.1.2 (a)]{KS3}.\\
	\noindent
	(ii) This follows from the isomorphism
	\begin{equation*}
	\cA_X^{loc} \simeq \cA_X \otimes_{\cA_X^{\C^\times}} \cA_X^{loc, \C^\times}.
	\end{equation*}
\end{proof}

\subsection{The equivalence of categories}

The aim of this subsection is to prove the following theorem.

\begin{theorem}\label{thm:equimicroDQ}
	The adjoint pair $(p^\ast_{\C^\times},p_\ast^{\C^\times})$ induces a well defined adjunction
	\begin{equation}\label{mor:adjthm}
	\xymatrix{
		p^\ast_{\C^\times} \colon \Mod_\coh(\cB_Y) \ar@<.4ex>[r]&  \ar@<.4ex>[l]\Mod_{\mathrm{F},\,\coh}(\cA^{loc}_X) \colon p_\ast^{\C^\times}.
	}
	\end{equation}
	These functors are equivalence of categories inverse to each others.
\end{theorem}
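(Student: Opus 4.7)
The plan is to show that the unit $\eta$ and counit $\varepsilon$ of the adjunction from Proposition \ref{prop:adjunction} restrict to natural isomorphisms on coherent objects; this simultaneously forces both functors to preserve coherence and yields the stated equivalence. Two baseline observations are immediate: $p^\ast_{\C^\times}\cB_Y \simeq \cA_X^{loc}$ (with its canonical F-action) and $p_\ast^{\C^\times}\cA_X^{loc} = \cB_Y$, so by additivity $\eta$ and $\varepsilon$ are identities on free modules of finite rank. Moreover, $p^\ast_{\C^\times}$ is right exact and sends finite free presentations to finite free presentations, hence preserves coherence.

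The key technical input is that coherent F-equivariant modules lie locally in the essential image of $p^\ast_{\C^\times}$. Indeed, by Corollary \ref{cor:geninvglob} every $\cM \in \Mod_{\mathrm{F},\coh}(\cA_X^{loc})$ admits, on a cover by equivariant opens $V_i = p^{-1}(U_i)$, an equivariant presentation $(\cA_X^{loc})^n \xrightarrow{f} (\cA_X^{loc})^m \to \cM|_{V_i} \to 0$. Equivariance of $f$ forces its matrix entries to lie in $\cA_X^{loc,\C^\times}(V_i) = \cB_Y(U_i)$, so $f = p^\ast_{\C^\times} f_0$ for a morphism $f_0 \colon \cB_Y^n|_{U_i} \to \cB_Y^m|_{U_i}$, and right exactness of $p^\ast_{\C^\times}$ yields $\cM|_{V_i} \simeq p^\ast_{\C^\times}\cN_i$ with $\cN_i = \coker f_0$ coherent over $\cB_Y$ by Theorem \ref{thm:cohinv}.

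Granting for a moment that $\eta$ is an isomorphism on coherent $\cB_Y$-modules, the remaining assertions follow quickly. The triangle identity $\varepsilon_{p^\ast_{\C^\times}\cN_i} \circ p^\ast_{\C^\times}\eta_{\cN_i} = \id$ together with $\eta_{\cN_i}$ being an isomorphism shows that $\varepsilon_{\cM|_{V_i}}$ is an isomorphism, while applying $p_\ast^{\C^\times}$ to $\cM|_{V_i} \simeq p^\ast_{\C^\times}\cN_i$ gives $p_\ast^{\C^\times}\cM|_{U_i} \simeq \cN_i$, establishing coherence of $p_\ast^{\C^\times}\cM$.

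The heart of the argument is therefore the claim that $\eta_\cN \colon \cN \to p_\ast^{\C^\times}p^\ast_{\C^\times}\cN$ is an isomorphism for coherent $\cN$. Injectivity is formal: the triangle identity exhibits $p^\ast_{\C^\times}\eta_\cN = p^\ast\eta_\cN$ as a split monomorphism, and faithful flatness of $\cA_X^{loc}$ over $p^{-1}\cB_Y$ reflects this property down to $\eta_\cN$. Surjectivity, which I expect to be the main obstacle, I would attack by taking a local presentation $\cB_Y^m \xrightarrow{\beta_0} \cN \to 0$: the natural square with $\eta_{\cB_Y^m} = \id$ reduces surjectivity of $\eta_\cN$ to surjectivity of $p_\ast^{\C^\times}(p^\ast_{\C^\times}\beta_0)$, equivalently to the statement that every invariant local section of $p^\ast_{\C^\times}\cN$ lifts to an invariant section of $(\cA_X^{loc})^m$. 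Writing such a section as $\sum a_i \tilde{e}_i$ with $\tilde{e}_i$ the invariant images of the standard basis, the obstruction $\theta_t(a) - a$ to invariance of the coefficients is a $\C^\times$-cocycle valued in $\ker \beta$ over an equivariant neighbourhood; the needed correction is obtained by a Cauchy--Kowaleski style argument, structurally analogous to the one carried out in the proof of Theorem \ref{thm:invariantgeneration}, and then globalized using Lemma \ref{lem:extension} on the equivariant open.
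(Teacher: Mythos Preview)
Your overall plan is sound and yields a correct proof, but you have inverted the order of the paper's argument, and this inversion costs you a genuine extra piece of analysis that the paper avoids. The paper establishes first that the counit $\varepsilon_\cM$ is an isomorphism for every $\cM\in\Mod_{\mathrm{F},\coh}(\cA_X^{loc})$ (Lemma~\ref{lem:equi1}): surjectivity is immediate from Corollary~\ref{cor:geninvglob}, and injectivity uses that same corollary a second time, applied to the equivariant coherent kernel of an invariant presentation, to rewrite any relation among invariant generators in terms of invariant relations. From $\varepsilon$ being an isomorphism one deduces, via faithful flatness of $\cA_X^{loc}$ over $p^{-1}\cB_Y$, that $p_\ast^{\C^\times}$ is exact on coherent objects (Proposition~\ref{prop:exactness}); then $\eta_\cN$ is an isomorphism purely formally, by right-exactness of both functors and the case $\cN=\cB_Y$. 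By contrast, you attack $\eta_\cN$ first, which forces you to prove directly that invariant sections lift through the equivariant surjection $(\cA_X^{loc})^m\to p^\ast_{\C^\times}\cN$. Your sketch for this---correcting a lift by solving $v(b)=v(a)$ inside $\ker\beta$---does work: once $\ker\beta$ is generated by invariant sections $k_j$ (again Corollary~\ref{cor:geninvglob}), it reduces to the local surjectivity of $v$ on $\cA_X^{loc}$, which follows from Lemma~\ref{lem:derivsol} together with the recursive scheme and the domain-control argument of the proof of Theorem~\ref{thm:invariantgeneration}. So your route is valid, but it needs this additional surjectivity-of-$v$ lemma, whereas the paper's route needs nothing beyond two applications of Corollary~\ref{cor:geninvglob} and faithful flatness. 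Your deduction of $\varepsilon$ from $\eta$ via the triangle identity and the local form $\cM|_{V_i}\simeq p^\ast_{\C^\times}\cN_i$ is correct, as is your argument for coherence of $p_\ast^{\C^\times}\cM$.
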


For that purpose, we  have to prove that the adjunction is well defined and that the unit and the counit of this adjunction are isomorphisms.

\begin{lemma}\label{lem:equi1}
	Let $\cM \in \Mod_{\mathrm{F}, \coh}(\cA^{loc}_X)$. The natural morphism \eqref{map:counit}
	\begin{equation*}
	\varepsilon\colon p^\ast_{\C^\times}p_\ast^{\C^\times} \cM \longrightarrow \cM 
	\end{equation*}
	is an isomorphism.
\end{lemma}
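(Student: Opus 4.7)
The plan is to reduce to a finite presentation via Corollary \ref{cor:geninvglob} and conclude by the five lemma. Working locally, there is an equivariant open $V = p^{-1}(U)$ on which $\cM$ admits an $\mathrm{F}$-equivariant presentation
$$(\cA_X^{loc})^n \xrightarrow{A} (\cA_X^{loc})^m \xrightarrow{\pi} \cM \to 0.$$
Since $\mathrm{F}$-equivariant morphisms between free $\cA_X^{loc}$-modules equipped with their canonical $\mathrm{F}$-action correspond to matrices whose entries are invariant sections, the matrix $A$ has entries in $\cA_X^{loc,\C^\times} \simeq p^{-1}\cB_Y$, and after shrinking $U$ it descends to a matrix $\tilde A$ with entries in $\cB_Y(U)$. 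Under the canonical identifications $p^\ast_{\C^\times}p_\ast^{\C^\times}\cA_X^{loc} \simeq p^\ast_{\C^\times}\cB_Y \simeq \cA_X^{loc}$, the counit $\varepsilon$ is the identity on free modules, so applying $p^\ast_{\C^\times}p_\ast^{\C^\times}$ to the presentation and mapping to it via $\varepsilon$ produces a commutative ladder whose first two vertical arrows are isomorphisms.

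The bottom row is exact by hypothesis, so by the five lemma it suffices to show that the top row is exact. This top row is obtained by applying the exact functor $p^\ast_{\C^\times}$ (exact by the faithful flatness of $\cA_X^{loc}$ over $p^{-1}\cB_Y$ established in the previous subsection) to
$$\cB_Y^n \xrightarrow{\tilde A} \cB_Y^m \to p_\ast^{\C^\times}\cM,$$
so the whole argument boils down to the local surjectivity on $Y$ of the second arrow. This is the main step and what I expect to be the principal obstacle.

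To establish this surjectivity, the plan is to correct an arbitrary local lift by solving a single ODE. Given a local invariant section $t$ of $\cM$, choose an arbitrary local lift $u \in (\cA_X^{loc})^m$. Then $\pi(v(u)) = v(\pi(u)) = v(t) = 0$, so $v(u)$ lies in $\im(A)$, say $v(u) = A(\xi)$ for some local $\xi \in (\cA_X^{loc})^n$. Since the entries of $A$ are invariant one has $v \circ A = A \circ v$; hence if one can find $w$ satisfying $v(w) = \xi$ locally, then $\tilde u := u - A(w)$ satisfies $v(\tilde u) = 0$ and still projects to $t$. The existence of such a $w$ is an ODE problem: because the action is free, $v_0$ is non-vanishing, and in local coordinates where $v_0 = \partial_{z_1}$ the equation $v(w) = \xi$ splits, order-by-order in $\hbar$, into a recursive sequence of first-order linear ODEs in $z_1$, solvable by the same mechanism as in the proof of Theorem \ref{thm:invariantgeneration} (Lemma \ref{lem:derivsol} provides the key surjectivity at each step). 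Finally, Lemma \ref{lem:extension}(i) extends the locally invariant section $\tilde u$ to a genuinely invariant section on an equivariant neighborhood, producing the required preimage in $\cB_Y^m$ and completing the argument.
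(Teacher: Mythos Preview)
Your argument is correct, but it takes a different route from the paper's. The paper argues directly: surjectivity of $\varepsilon$ is immediate from Corollary \ref{cor:geninvglob} (any local section of $\cM$ is an $\cA_X^{loc}$-combination of invariant generators, hence in the image of $\varepsilon$); for injectivity, given $\sum w_j\otimes m_j$ with $\sum w_j m_j=0$, the kernel of $e_j\mapsto m_j$ is again an object of $\Mod_{\mathrm{F},\coh}(\cA_X^{loc})$, so by the \emph{same} Corollary it is generated by invariant tuples $s_i=(s_{ij})$; writing $w=\sum_i\alpha_i s_i$ one rewrites $\sum_j w_j\otimes m_j=\sum_i\alpha_i\otimes(\sum_j s_{ij}m_j)=0$. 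Thus the paper simply invokes Theorem \ref{thm:invariantgeneration}/Corollary \ref{cor:geninvglob} twice and never touches an ODE again.

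Your approach instead reproves a special case of that machinery: the surjectivity of $\cB_Y^m\to p_\ast^{\C^\times}\cM$ that you isolate as the ``main step'' is exactly the statement that invariant sections of $\cM$ lift to invariant sections of the free cover, and your ODE correction $v(w)=\xi$ is a variant of the recursive system already solved in the proof of Theorem \ref{thm:invariantgeneration}. This works, but it is redundant with what the paper has already established. Two minor points: your five-lemma reduction is slightly imprecise, since surjectivity of $\cB_Y^m\to p_\ast^{\C^\times}\cM$ does not by itself give exactness of the top row at $\cB_Y^m$; however, once you have that surjectivity, the diagram chase using the exactness of the bottom row and the identifications on the free terms does yield that $\varepsilon$ is an isomorphism, so the conclusion stands. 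Also note that the exactness of $p_\ast^{\C^\times}$ on coherent objects (Proposition \ref{prop:exactness}) is deduced in the paper \emph{from} this lemma, so it is good that your argument for the surjectivity step is self-contained and does not appeal to it.
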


\begin{proof}
	It follows from Corollary \ref{cor:geninvglob} that the morphism \eqref{map:counit} is an epimorphism.

	Let us prove that it is a monomorphism. Let $x \in X$ and let $m=\sum_{i=1}^n w_i \otimes m_i \in \cA^{loc}_{X,x} \te_{\cA^{loc, \C^\times}_{X,x}} p_\ast^{\C^\times}(\cM)_{p(x) }$. We can assume that $m_1,\ldots,m_n$ are invariant sections of $\cM$ defined on an equivariant open subset $V$ of $X$ containing $x$. Consider the map
	\begin{equation*}
	\phi\colon(\cA^{loc}_X)^n|_V \to \cM|_V, \quad \phi(e_j)=m_j, \; 1\leq j \leq n
	\end{equation*}
	where $(e_i)_{1\leq i \leq n}$ is the canonical basis of $(\cA^{loc}_X)^n$. Then we get the following exact sequence
	\begin{equation*}
	0 \to \ker \phi \to (\cA^{loc}_X)^n|_V \stackrel{\phi}{\to} \cM|_V.
	\end{equation*}  
	The module $\ker \phi$ belongs to $\Mod_{\mathrm{F}, \, \coh}(\cA_X^{loc} \vert_V)$. It is locally finitely generated by invariant sections. Hence, there exists an equivariant open set $V^\prime$ containing $x$ and invariant sections $s_1,\ldots,s_q$ where $s_i=(s_{i1}, \ldots,s_{in})$ with $s_{ij} \in \cA_X^{loc, \C^\times}(V^\prime)$ and generating $\ker \phi \vert_{V^\prime}$. 
	
	Assume that $\varepsilon(m)=\sum_{j=1}^n w_j m_j=0$. Thus, $w=(w_1,\ldots,w_n) \in \ker \phi_x$ and we have
	\begin{align*}
	w=\sum_{i=1}^q \alpha_i s_i
	\end{align*}
	with $\alpha_i \in \cA_{X,x}^{loc}$. Then
	\begin{align*}
	\sum_{j=1}^n w_j \otimes m_j&=\sum_{j=1}^n (\sum_{i=1}^q \alpha_i s_{ij})\otimes m_j\\
	&=\sum_{i=1}^q (\alpha_i \otimes (\underbrace{\sum_{j=1}^n s_{ij} m_j)}_{=0})=0.
	\end{align*} 
	This proves that the map \eqref{map:counit} is a monomorphism.
\end{proof}

\begin{proposition}\label{prop:exactness}
	The functor $p_\ast^{\C^\times}$ restricted to $\Mod_{\mathrm{F}, \, \coh}(\cA_X^{loc})$ is exact.
\end{proposition}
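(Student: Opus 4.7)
Since Proposition \ref{prop:adjunction} shows $p_\ast^{\C^\times}$ is the right adjoint of $p^\ast_{\C^\times}$, it automatically preserves limits and is in particular left exact. So the only thing to establish is that $p_\ast^{\C^\times}$ preserves epimorphisms, i.e.\ that for any short exact sequence $0 \to \cM^\prime \to \cM \to \cM^{\prime\prime} \to 0$ in $\Mod_{\mathrm{F},\coh}(\cA_X^{loc})$, the induced morphism $p_\ast^{\C^\times}\cM \to p_\ast^{\C^\times}\cM^{\prime\prime}$ is surjective.

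The idea is to reduce this to the counit isomorphism of Lemma \ref{lem:equi1} by means of a faithful flatness argument. Let $\cC$ be the cokernel of the induced map, so that
\begin{equation*}
p_\ast^{\C^\times}\cM \longrightarrow p_\ast^{\C^\times}\cM^{\prime\prime} \longrightarrow \cC \longrightarrow 0
\end{equation*}
is exact in $\Mod(\cB_Y)$. I would then apply the functor $p^\ast_{\C^\times}(-) = \cA_X^{loc} \te_{p^{-1}\cB_Y} p^{-1}(-)$. This functor is exact because $p^{-1}$ is exact and $\cA_X^{loc}$ is (faithfully) flat over $p^{-1}\cB_Y \simeq \cA_X^{loc, \C^\times}$ by the preceding proposition. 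By the naturality of the counit $\varepsilon$ together with Lemma \ref{lem:equi1}, the resulting exact sequence is identified with
\begin{equation*}
\cM \longrightarrow \cM^{\prime\prime} \longrightarrow p^\ast_{\C^\times}\cC \longrightarrow 0.
\end{equation*}
Since $\cM \to \cM^{\prime\prime}$ was assumed surjective, this forces $p^\ast_{\C^\times}\cC = 0$.

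It remains to deduce $\cC = 0$. Faithful flatness of $\cA_X^{loc}$ over $p^{-1}\cB_Y$ gives $p^{-1}\cC = 0$, and since $p \colon X \to Y$ is surjective (being a principal $\C^\times$-bundle), the functor $p^{-1}$ is faithful on sheaves on $Y$, so $\cC = 0$. This provides the missing surjectivity and completes the proof. There is no real obstacle here beyond bookkeeping; everything needed is already in place (the adjunction of Proposition \ref{prop:adjunction}, the counit isomorphism of Lemma \ref{lem:equi1}, and the faithful flatness stated just above). A more direct route — lifting a locally invariant section of $\cM^{\prime\prime}$ to a locally invariant section of $\cM$ by solving $v(s) = 0$ in the extension — is certainly possible but would reproduce Cauchy-type computations similar to those in Lemma \ref{lem:epiinv1} and Theorem \ref{thm:invariantgeneration}; the adjunction-based argument sketched above is considerably shorter.
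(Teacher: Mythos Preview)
Your argument is correct and matches the paper's proof almost exactly: both start from left exactness via the adjunction of Proposition \ref{prop:adjunction}, invoke the counit isomorphism of Lemma \ref{lem:equi1} after applying $p^\ast_{\C^\times}$, and then descend using faithful flatness of $\cA_X^{loc}$ over $p^{-1}\cB_Y \simeq \cA_X^{loc,\C^\times}$ together with surjectivity of $p$. The only cosmetic difference is that you isolate the cokernel $\cC$ and kill it, whereas the paper applies $p^\ast_{\C^\times}p_\ast^{\C^\times}\simeq\id$ to the entire short exact sequence at once; the logical content is identical.
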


\begin{proof}
	As $p_\ast^{\C^\times}$ is a right adjoint it is left exact. Let us show it is right exact on $\Mod_{\mathrm{F}, \, \coh}(\cA_X^{loc})$. Consider the exact sequence
	\begin{equation*}
	0 \to \cM^\prime \to \cM \to \cM^{\prime \prime} \to 0.
	\end{equation*}
	and apply the functor $p^\ast_{\C^\times} p_\ast^{\C^\times}$ to the above exact short exact sequence. Since $p^\ast_{\C^\times} p_\ast^{\C^\times}$ is isomorphic to the identity functor on $\Mod_{\mathrm{F}, \, \coh}(\cA_X^{loc})$, we obtain the short exact sequence
	\begin{equation*}
	0 \to p^\ast_{\C^\times} p_\ast^{\C^\times}\cM^\prime \to p^\ast_{\C^\times} p_\ast^{\C^\times}\cM \to p^\ast_{\C^\times} p_\ast^{\C^\times}\cM^{\prime \prime} \to 0.
	\end{equation*}
	The ring $\cA^{loc}_X$ is faithfully flat over $\cA^{loc, \C^\times}_X$, this implies that the sequence
	\begin{equation}\label{seq:exactp}
	0 \to p^{-1} p_\ast^{\C^\times}\cM^\prime \to  p^{-1} p_\ast^{\C^\times}\cM \to p^{-1} p_\ast^{\C^\times} \cM^{\prime \prime}\to 0
	\end{equation}
	is exact. Moreover, $p \colon X \to Y$ is surjective. Hence taking the stalks of the short exact sequence \eqref{seq:exactp} in every $x \in X$, we find  that for every $y \in Y$ the sequence
	\begin{equation*}
	0 \to (p_\ast^{\C^\times}\cM^\prime)_y \to  (p_\ast^{\C^\times}\cM)_y \to (p_\ast^{\C^\times} \cM^{\prime \prime})_y\to 0
	\end{equation*}
	is exact. This proves that $p_\ast^{\C^\times}$ is exact.
\end{proof}

\begin{lemma}\label{lem:conservationcoherent}
	The functors $p^\ast_{\C^\times}$ and $p_\ast^{\C^\times}$ preserve coherent modules.
\end{lemma}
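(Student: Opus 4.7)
The plan is to handle the two functors separately, in each case reducing coherence to the existence of a local finite presentation by free modules and invoking noetherianity of the relevant sheaf of rings established in Theorem \ref{thm:cohinv}.

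For the functor $p^\ast_{\C^\times}$, let $\cN\in\Mod_{\coh}(\cB_Y)$. Because $\cB_Y$ is a noetherian sheaf of $\C$-algebras by Theorem \ref{thm:cohinv}(ii), coherence of $\cN$ is equivalent to the local existence of a finite presentation $\cB_Y^m\to\cB_Y^n\to\cN\to 0$. I apply $p^\ast_{\C^\times}$ to this presentation. Since $p^\ast_{\C^\times}$ is a left adjoint (Proposition \ref{prop:adjunction}) it is right exact, and by construction $p^\ast_{\C^\times}\cB_Y\simeq\cA_X^{loc}$ with its tautological $\mathrm{F}$-action. Hence locally on $X$ we obtain a presentation $(\cA_X^{loc})^m\to(\cA_X^{loc})^n\to p^\ast_{\C^\times}\cN\to 0$. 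Since $\cA_X^{loc}$ is a noetherian sheaf of algebras (standard for DQ-algebras, see \cite[\S 1]{KS3}), the module $p^\ast_{\C^\times}\cN$ is coherent as an $\cA_X^{loc}$-module and hence in $\Mod_{\mathrm{F},\coh}(\cA_X^{loc})$.

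For the functor $p_\ast^{\C^\times}$, let $\cM\in\Mod_{\mathrm{F},\coh}(\cA_X^{loc})$. The key input is Corollary \ref{cor:geninvglob}, which provides a covering $(V_i)_{i\in I}$ of $X$ by equivariant open subsets on which $\cM|_{V_i}$ admits, in $\Mod_{\mathrm{F}}(\cA_X^{loc}|_{V_i})$, a presentation of length one by free modules of finite rank:
\begin{equation*}
(\cA_X^{loc})^{m_i}|_{V_i}\longrightarrow (\cA_X^{loc})^{n_i}|_{V_i}\longrightarrow \cM|_{V_i}\longrightarrow 0.
\end{equation*}
Because the $V_i$ are equivariant and $p\colon X\to Y$ is open, the images $U_i:=p(V_i)$ form an open covering of $Y$, and one has $V_i=p^{-1}(U_i)$. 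I apply $p_\ast^{\C^\times}$ to the above equivariant presentation. By Proposition \ref{prop:exactness}, $p_\ast^{\C^\times}$ is exact on $\Mod_{\mathrm{F},\coh}(\cA_X^{loc})$, and as a right adjoint it commutes with finite direct sums, so that $p_\ast^{\C^\times}((\cA_X^{loc})^{k})\simeq\cB_Y^{k}$. Restricting to $U_i$, we obtain an exact sequence
\begin{equation*}
\cB_Y^{m_i}|_{U_i}\longrightarrow \cB_Y^{n_i}|_{U_i}\longrightarrow (p_\ast^{\C^\times}\cM)|_{U_i}\longrightarrow 0,
\end{equation*}
exhibiting $p_\ast^{\C^\times}\cM$ as locally finitely presented over $\cB_Y$. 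Invoking Theorem \ref{thm:cohinv}(ii) a second time to deduce coherence from noetherianity of $\cB_Y$ concludes the proof.

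The most delicate point is the existence of the equivariant finite presentation used in the second step; everything else is formal bookkeeping combining exactness properties of the adjoint pair $(p^\ast_{\C^\times},p_\ast^{\C^\times})$ with noetherianity of $\cB_Y$. That delicate point has, however, already been secured upstream: it rests on Theorem \ref{thm:invariantgeneration}, which produces local invariant generators, and Lemma \ref{lem:extension}(ii), which upgrades locally invariant generators to globally invariant ones on equivariant open subsets. Once Corollary \ref{cor:geninvglob} is at hand, the argument above goes through without further obstacles.
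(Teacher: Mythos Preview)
Your proof is correct and follows essentially the same approach as the paper: for $p^\ast_{\C^\times}$ you pull back a local finite presentation using right exactness, and for $p_\ast^{\C^\times}$ you push forward the equivariant presentation of Corollary \ref{cor:geninvglob} using the exactness of Proposition \ref{prop:exactness}, then conclude by coherence (equivalently noetherianity) of $\cB_Y$. The paper is terser but the logical skeleton is identical.
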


\begin{proof}
	\noindent (i) Since $\cA_X^{loc}$ is coherent, a $\cA_X^{loc}$-modules is coherent if and only if it locally admits a presentation of length one by finitely generated free modules. This implies that $p_{\C^\times}^\ast$ preserves coherent modules.
	
	\noindent(ii) Let $\cM$ be a coherent $\cA^{loc}_X$-module endowed with an $\mathrm{F}$-action. It follows from Corollary \ref{cor:geninvglob} that there exists an equivariant open subset $V$ of $X$ such that $\cM \vert_V$ has a presentation of length one by free modules of finite rank in $\Mod_{\mathrm{F}, \, \coh}(\cA_X^{loc})$, i.e
	\begin{equation*}
	(\cA_X^{loc} \vert_V)^n \to (\cA_X^{loc} \vert_V)^m \to\cM\vert_V \to 0.
	\end{equation*}
	Applying the exact functor $p_\ast^{\C^\times}$ to the above sequence, we obtain the exact sequence 
	\begin{equation*}
	(\cB_Y \vert_{p(V)})^n \to (\cB_Y \vert_{p(V)})^m \to (p_\ast^{\C^\times} \cM) \vert_{p(V)} \to 0.
	\end{equation*}
	As $\cB_Y$ is coherent, this implies that $p_\ast^{\C^\times} \cM$ is a coherent $\cB_Y$-module.
\end{proof}

We are now ready to prove the main result of this section.

\begin{proof}[Proof of Theorem \ref{thm:equimicroDQ}]
	It follows from Proposition \ref{prop:adjunction} and  Lemma \ref{lem:conservationcoherent} that the adjunction \eqref{mor:adjthm} is well defined. Because of Lemma \ref{lem:equi1}, it only remains to show that for every $\cN \in \Mod_{\coh}(\cB_Y)$
	\begin{equation}\label{eq:couniteq}
	\eta_\cN \colon \cN \to p_\ast^{\C^\times}p_{\C^\times}^\ast \cN
	\end{equation}
	is an isomorphism. The $\cB_Y$-module $\cN$ is coherent. Hence, there is a covering $(V_i)_{i \in I}$ of $Y$ such that for each $i \in I$, $\cN \vert_{V_i}$  has a free presentations of length one. It follows that the morphism \eqref{eq:couniteq} is an isomorphism since $p^\ast_{\C^\times}$ and $p_\ast^{\C^\times}$ are right exact functors and $\eta_{\cB_Y}$ is an isomorphism.
\end{proof}

\subsection{An example: the case of $\chW_X$}

In this section, we sketch the construction of the canonical weight one $\mathrm{F}$-action on $\chW_X$. Let $M$ be a complex manifold and let X be a conical open subset of $T^\ast M$. We denote by $\chW_{X}(0)$, the restriction to $X$ of the standard quantization of the cotangent bundle and $\chW_{X} := \C^{\hbar,loc} \otimes_{\C^\hbar} \cW_{X}(0)$ (see for instance \cite[p.133]{KS3}). We write $\chE_{X}$ for the restriction to $X$ of the ring of formal microdifferentials operators on $T^\ast M$, $\chE_{X}(0)$ for the subsheaf of order zero microdifferential operators and set $\cA_{\C^\times  \times X}: = \cO^\hbar_{\C^\times} \ubtimes \cW_X(0)$.

We endow $\C$ with a coordinate $t$, $T^\ast \C$ with the coordinate system $(t,\tau)$. Similarly, we equip $\C^\times$ with the coordinate $r$ and $T^\ast \C^\times$ with the coordinate system $(r, \lambda)$. We consider the map

\begin{equation*}
\gamma \colon T^\ast(\C^\times \times \C) \to T^\ast (\C^\times \times \C), \; (r,t;\lambda,\tau) \mapsto (r,t;\lambda,\tau / r).
\end{equation*}
and the isomorphism of sheaves $\gamma^\sharp$ given, on every open subset $V$ of $T^\ast_{\tau \neq 0} (\C^\times \times \C)$,  by
\begin{align}\label{mor:preaction}
\begin{split}
\gamma_V^\sharp \colon \left( \chE_{T^\ast_{\tau \neq 0} (\C^\times \times \C), \, \hat{t}, \hat{\partial}_r}(0) \right)(V) &\stackrel{\sim}{\to} \gamma_\ast \left( \chE_{T^\ast_{\tau \neq 0} (\C^\times \times \C), \, \hat{t},  \hat{\partial}_r}(0) \right)(V) \\
g(r; \tau) &\mapsto g(r;  \tau/r)
\end{split}
\end{align}
where
\begin{equation*}
\chE_{T^\ast_{\tau \neq 0} (\C^\times \times \C), \, \hat{t}, \hat{\partial}_r}(0)= \lbrace P \in \chE_{T^\ast_{\tau \neq 0} \C^\times \times \C}(0) \vert\, [P,r]=0 \textnormal{ and } [P,\partial_t]=0\rbrace.
\end{equation*}

\begin{remark}
	Here, we have used the fact that there are global coordinate systems on $T^\ast \C$ and $T^\ast (\C^\times \times \C)$ which allows us to identify formal microdifferential operators with their total symbols.
\end{remark}

Applying $ (\cdot) \boxtimes \chE_{X}(0)$ to the morphism \eqref{mor:preaction}, we obtain the isomorphism

\begin{equation*}
\gamma^\sharp \boxtimes \id  \colon   \chE_{T^\ast_{\tau \neq 0} (\C^\times \times \C), \, \hat{t}, \hat{\partial}_r}(0)  \boxtimes  \chE_{X}(0) \stackrel{\sim}{\to}  (\gamma \times \id)_\ast \chE_{T^\ast_{\tau \neq 0} (\C^\times \times \C), \, \hat{t}, \hat{\partial}_r}(0)  \boxtimes \chE_{X}(0).
\end{equation*}
Since $\chE_{T^\ast_{\tau \neq 0} (\C^\times \times \C), \, \hat{t}, \hat{\partial}_r}(0)$ and $\chE_{X}(0)$ are sheaves of nuclear Fr\'echet algebras and $\gamma^\sharp$ is continuous, we get, by completing the tensor products, an isomorphism  
\begin{equation}\label{mor:exemplecomp}
\tilde{\gamma}^\sharp \colon   \chE_{T^\ast_{\tau \neq 0} (\C^\times \times \C)\times X, \, \hat{t}, \hat{\partial}_r}(0) \stackrel{\sim}{\to}  (\gamma \times \id)_\ast \chE_{T^\ast_{\tau \neq 0} (\C^\times \times \C)\times X, \, \hat{t}, \hat{\partial}_r}(0).
\end{equation}
Consider the morphism
\begin{equation*}
\widetilde{\rho} \colon T^\ast_{\tau \neq 0} (\C^\times \times \C)\times X \to \C^\times \times X, \; (r,t,x;\lambda,\tau,\xi) \mapsto (r,x; \xi / \tau).
\end{equation*}

Applying $\widetilde{\rho}_\ast$ to the morphism \eqref{mor:exemplecomp} provides the continuous morphism 
\begin{equation*}
\widetilde{\psi} \colon \cA_{\C^\times \times X} \stackrel{\sim}{\to}  \wtmu_\ast \cA_{\C^\times \times X}.
\end{equation*}
By adjunction, we get the $\mathrm{F}$-action
\begin{equation*}
\widetilde{\theta} \colon \wtmu^{-1}\cA_{\C^\times \times X} \stackrel{\sim}{\to}   \cA_{\C^\times \times X}.
\end{equation*}

Let $V$ be an open subset of $\C^\times$ and $(U,x;u)$   a local symplectic coordinate system of $X$ where $U$ is a conical open subset of $X$. Then

\begin{align*}
\widetilde{\theta}_{V \times U} \colon \wtmu^{-1}\cA_{\C^\times \times X}(V \times U) &\to   \cA_{\C^\times \times X}(V \times U)\\ 
\sum_{i \geq 0} f_i(r,x;u) \hbar^i & \mapsto \sum_{i \geq 0} f_i(r,x;r \cdot u) r^i\hbar^i.
\end{align*}
This implies that
\begin{align*}
\theta_r \colon \mu_r^{-1} \chW_X(0)(U) & \to \chW_X(0)(U)\\
\sum_{i \geq 0} f_i(x;u) \hbar^i & \mapsto \sum_{i \geq 0} f_i(x;r \cdot u) r^i\hbar^i.
\end{align*}
A section $s=\sum_{i \geq 0} f_i(x;u) \hbar^i$ in $\chW_X(0)(U)$ is invariant, if for every $r \in \C^\times$, $\theta_r(s)=s$. That is, 
\begin{equation*}
\textnormal{for every $i \geq 0,$\;}f_i(x,r u)=r^{-i}f_i(x,u).
\end{equation*}
This implies that $s \in \iota(\chE_X(0)(U))$ and in particular that $(\chW_X(0))^{\C^\times} \simeq \chE_X(0)$ and $\chW_X^{\C^\times} \simeq \chE_X$. Hence, applying Theorem \ref{thm:equimicroDQ}, we obtain the proposition

\begin{proposition}\label{prop:exemple}
	Assume that $X$ does not intersect the zero sections of $T^\ast M$ and let $Y=X/\C^\times$. Then the adjoint pair $(p^\ast_{\C^\times},p_\ast^{\C^\times})$ induces a well defined adjunction
	\begin{equation*}
	\xymatrix{
		p^\ast_{\C^\times} \colon \Mod_\coh(\chE_Y) \ar@<.4ex>[r]&  \ar@<.4ex>[l]\Mod_{\mathrm{F},\,\coh}(\chW_X) \colon p_\ast^{\C^\times}.
	}
	\end{equation*}
	These functors are equivalence of categories inverse to each others.
\end{proposition}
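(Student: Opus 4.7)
The plan is to verify the hypotheses of Theorem \ref{thm:equimicroDQ} for the DQ-algebra $\chW_X$ equipped with the morphism $\widetilde{\theta}$ constructed above, and then to identify the resulting sheaf $\cB_Y$ of invariant sections with $\chE_Y$. Since the proposition is stated as a direct application of the general equivalence, almost all of the work has already been done; what remains is a bookkeeping verification.

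First, I would check that the action of $\C^\times$ on $X$ is free and proper. The action is the natural dilation on the fibers of $T^\ast M$, restricted to the conic open set $X$; by hypothesis $X$ does not meet the zero section, so the stabilizer of any point is trivial (freeness) and properness of $\C^\times \curvearrowright T^\ast M \setminus (\text{zero section})$ is standard. Thus $Y = X/\C^\times$ is a complex manifold and $p \colon X \to Y$ is a $\C^\times$-principal bundle, as required by Theorem \ref{thm:equimicroDQ}.

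Next, I would verify that $\widetilde{\theta}$ is an $\mathrm{F}$-action of exponent $m=1$ on $\chW_X(0)$ (hence, by localization, on $\chW_X$). The explicit local formula
\[
\widetilde{\theta}_{V\times U}\colon \sum_{i\ge 0} f_i(r,x;u)\,\hbar^i \mapsto \sum_{i\ge 0} f_i(r,x;r\cdot u)\,r^i\hbar^i
\]
immediately gives (a) $\theta_1=\id$, (b) $\theta_t(\hbar^n) = t^n\hbar^n$; continuity of the Fréchet morphism $\widetilde{\psi}$ in (c) is built into the construction (it is produced by completing the continuous map $\gamma^\sharp \boxtimes \id$ and then pushing forward by $\widetilde{\rho}_\ast$). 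For the cocycle diagram (d), one evaluates both legs on a section $\sum f_i \hbar^i$; both routes compose two successive dilations on $u$ and two successive rescalings of $\hbar$, and equality is then a direct computation using only the group law $(t_1,t_2)\mapsto t_1 t_2$ on $\C^\times$, which I would indicate rather than expand in detail.

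With these inputs Theorem \ref{thm:equimicroDQ} produces an adjoint equivalence
\[
p^\ast_{\C^\times}\colon \Mod_\coh(\cB_Y) \rightleftarrows \Mod_{\mathrm{F},\coh}(\chW_X)\,:\!p_\ast^{\C^\times}.
\]
It thus suffices to identify $\cB_Y$ with $\chE_Y$. The text already shows that an invariant local section $s=\sum f_i(x;u)\hbar^i$ of $\chW_X(0)$ satisfies $f_i(x,ru)=r^{-i}f_i(x,u)$, so each $f_i$ is homogeneous of degree $-i$ in $u$, which is exactly the condition for $s$ to lie in the image of $\iota\colon \chE_X(0)\hookrightarrow \chW_X(0)$. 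Hence $\chW_X(0)^{\C^\times} \simeq \chE_X(0)$, and localizing in $\hbar$ gives $\chW_X^{\C^\times}\simeq \chE_X$; pushing down by $p$ identifies $\cB_Y(0)$ with $\chE_Y(0)$ and $\cB_Y$ with $\chE_Y$, and substituting into the equivalence above yields the statement.

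The only step that is not completely formal is the verification of the cocycle diagram (d) for $\widetilde{\theta}$, which I expect to be the main nuisance: one has to unwind the definition of $\lambda$ from Lemma \ref{lem:extensioncompletion} in terms of the explicit formula for $\widetilde{\theta}$. Working locally in the symplectic coordinates $(r,t,x;\lambda,\tau,\xi)$ reduces it to checking that iterated dilation by $r_1$ then by $r_2$ coincides with dilation by $r_1r_2$ on both the $u$-variable and on $\hbar$, which is transparent from the formula above.
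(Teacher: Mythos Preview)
Your proposal is correct and follows essentially the same route as the paper: verify the hypotheses of Theorem \ref{thm:equimicroDQ} for the canonical $\mathrm{F}$-action on $\chW_X$, use the computation $\chW_X^{\C^\times}\simeq \chE_X$ to identify $\cB_Y$ with $\chE_Y$, and read off the equivalence. The paper simply says ``applying Theorem \ref{thm:equimicroDQ}'' after the identification of invariant sections, so your more explicit verification of freeness/properness and of the $\mathrm{F}$-action axioms (a)--(d) is a reasonable unpacking of what the paper leaves implicit.
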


\section{The codimension three conjecture for formal micro-differential modules}
In this subsection, we deduce the codimension-three conjecture  for formal microdifferential modules from the codimension-three conjecture for DQ-modules. We first recall the DQ-module version of the codimension three conjecture.

\begin{theorem}[{\cite[Theorem 1.5]{codim}}]\label{thm:codim}
	Let $X$ be a complex manifold endowed with a DQ-algebra $\cA_X$ such that the associated Poisson structure is symplectic. Let $\Lambda$ be a closed Lagrangian analytic subset of $X$, $Z$ a closed analytic subset of $\Lambda$ such that $\codim_\Lambda Z \geq 3$ and $j: X \setminus Z \to X$ the open embedding. Let $\cM$ be a holonomic $(\cA^{loc}_X|_{X \setminus Z})$-module, whose support is contained in $\Lambda \setminus Z$. Assume that $\cM$ has an $\cA_X|_{X \setminus Z}$-lattice. Then $j_\ast\cM$ is a holonomic module and is the unique holonomic extension of $\cM$ to $X$ whose support is contained in $\Lambda$. 
\end{theorem}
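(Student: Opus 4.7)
The plan is to reduce Theorem \ref{thm:codim} to the classical Hartogs-type extension for coherent $\cO_X$-modules via the $\hbar$-adic filtration on an $\cA_X$-lattice, and then lift the extension inductively while controlling the characteristic variety microlocally. Since the statement is local on $X$, one may work in a Darboux chart where $\cA_X$ is isomorphic to a star algebra on an open subset of $T^\ast \C^d$. Setting $2d = \dim X$, one has $\codim_X \Lambda = d$ and $\codim_X Z \geq d+3$, which will be the key numerical input throughout.

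\textbf{Uniqueness.} I would first dispose of the uniqueness statement. Let $\cN_1$ and $\cN_2$ be two holonomic extensions of $\cM$ to $X$, both supported in $\Lambda$. The sheaf $\fHom_{\cA^{loc}_X}(\cN_1,\cN_2)$ is coherent with support in $\Lambda$, and a Hartogs-type extension theorem for coherent DQ-modules with Lagrangian support (reducing modulo $\hbar$ to coherent $\cO_X$-modules with support of codimension $d$ and extending across closed analytic subsets of codimension $\geq d+2$) allows us to extend the identity morphism of $\cM$ on $X \setminus Z$ to a morphism on $X$; a symmetric argument in the other direction gives the desired isomorphism $\cN_1 \simeq \cN_2$.

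\textbf{Existence.} For existence, fix the given lattice $\cM_0$ on $X \setminus Z$ and consider the $\hbar$-adic quotients $\cM_n := \cM_0/\hbar^{n+1}\cM_0$. The bottom layer $\cN_0 := \cM_0 / \hbar \cM_0$ is a coherent $\cO_{X \setminus Z}$-module with support in the Lagrangian $\Lambda \setminus Z$; since $\codim_X Z \geq d+2 \geq \codim_X(\Lambda \setminus Z) + 2$, the Siu-Trautmann extension theorem for coherent analytic sheaves produces a unique coherent extension $\widetilde{\cN}_0$ to $X$ supported in $\Lambda$. One then lifts this extension through the tower by induction on $n$: each short exact sequence $0 \to \hbar^n \cM_0 / \hbar^{n+1} \cM_0 \to \cM_n \to \cM_{n-1} \to 0$ determines an extension class in an $\Ext^1$ group on $X \setminus Z$, and the codimension-three assumption forces this class to extend across $Z$, yielding $\widetilde{\cN}_n$. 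Passing to the limit $\widetilde{\cM}_0 := \varprojlim_n \widetilde{\cN}_n$ and inverting $\hbar$ produces a candidate extension $\widetilde{\cM}$ whose restriction to $X \setminus Z$ is $\cM$; by uniqueness, $\widetilde{\cM} \simeq j_\ast \cM$ and in particular $j_\ast \cM$ is coherent.

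\textbf{Main obstacle.} The hardest point is verifying that the resulting module is still holonomic, i.e.\ that its characteristic variety is contained in $\Lambda$ rather than a strictly larger involutive subset passing through $Z$. This requires a microlocal argument exploiting the sharp inequality $\codim_\Lambda Z \geq 3$: by a dimension count combined with Gabber's involutivity theorem, an involutive closed subset of $X$ whose intersection with $X \setminus Z$ equals $\Lambda \setminus Z$ must coincide with $\Lambda$ provided $Z$ has codimension $\geq 3$ in $\Lambda$. This is precisely the content of \cite{KV} for formal microdifferential modules, globalized in \cite{codim} to arbitrary symplectic DQ-algebras via a mixture of deformation-theoretic, sheaf-theoretic and symplectic techniques.
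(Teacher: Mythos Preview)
This theorem is not proved in the present paper: it is quoted from \cite{codim} as an external input and then used as a black box in Section~7 to deduce the microdifferential codimension-three conjecture. There is therefore no proof here to compare your proposal against.

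Your outline is nonetheless a fair high-level summary of the architecture of the actual argument in \cite{codim} and \cite{KV}: uniqueness from local-cohomology vanishing, existence of a coherent extension by $\hbar$-adic reduction to a Siu--Trautmann-type extension of coherent analytic sheaves, and a separate control of the characteristic variety. Two remarks. First, your uniqueness paragraph is mildly circular: extending $\fHom_{\cA^{loc}_X}(\cN_1,\cN_2)$ across $Z$ already presupposes a Hartogs-type statement for coherent DQ-modules with Lagrangian support; in practice uniqueness is obtained directly from the vanishing $\Hn^0_Z=\Hn^1_Z=0$ on holonomic modules (this is exactly Proposition~\ref{prop:annulationloccoh} of this paper, which needs only $\codim_\Lambda Z\geq 2$). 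Second, and more importantly, your final paragraph openly defers the holonomicity step back to \cite{KV} and \cite{codim}, so the proposal is not a self-contained proof but a pointer to the literature. The genuine content of the hypothesis $\codim_\Lambda Z\geq 3$ --- that it, and not merely $\geq 2$, is what makes the inductive lifting of the lattice through the $\hbar$-adic tower produce a \emph{coherent} and holonomic extension --- is where essentially all the work in \cite{codim} lies, and your sketch does not engage with that beyond invoking Gabber's theorem by name.
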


Let $M$ be a complex manifold and $T^\ast M$ its cotangent bundle, we set $\dot{T}^\ast M= T^\ast M \setminus M$, $P^\ast M$ for the projective cotangent bundle and $p \colon \dot{T} \ast M \to P^\ast M$ for the canonical projection.
Let $X$ be an open subset of $T^\ast M$ and $l$ be a non-negative integer, from now on, we set $X_l:=(\C^\times)^l \times X$, similarly $Z_l:=(\C^\times)^l \times Z$ and  $\cA_{X_l} := \cO^\hbar_{(\C^\times)^l} \ubtimes \chW_X(0)$. We will need the following proposition whose proof is similar to the one of \cite[Theorem 1.2.2]{KasKai}.
\begin{proposition} \label{prop:annulationloccoh}
	Let $r$ and $l$ be non-negative integers and $\cM$ be a coherent $\cA^{loc}_{X_l}$-module so that $\fExt^j_{\cA^{loc}_{X_l}}(\cM,\cA^{loc}_{X_l})=0$ for any $j >r$. Then $\Hn^j_{Z_l}(\cM)=0$ for any closed analytic subset $Z$ of $X$ and any $j < \codim Z -r$.
\end{proposition}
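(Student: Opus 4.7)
The question is local on $X_l$, so we may argue in an open neighborhood of any point. Since $\cA_{X_l}^{loc}$ is a noetherian sheaf of rings of finite global dimension and $\cM$ is coherent, the vanishing hypothesis $\fExt^j_{\cA_{X_l}^{loc}}(\cM, \cA_{X_l}^{loc}) = 0$ for all $j > r$ implies that, locally on $X_l$, the module $\cM$ admits a finite free resolution of length at most $r$:
\begin{equation*}
0 \to \cL^{-r} \to \cL^{-r+1} \to \cdots \to \cL^0 \to \cM \to 0,
\end{equation*}
with each $\cL^k$ a finitely generated free $\cA_{X_l}^{loc}$-module.

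The key input is then a Cohen-Macaulay-type vanishing for the DQ-algebra itself:
\begin{equation*}
\Hn^q_{Z_l}(\cA_{X_l}^{loc}) = 0 \quad \textnormal{for all } q < \codim Z_l = \codim Z.
\end{equation*}
Since localization with respect to $\hbar$ is exact, this reduces to the analogous statement for $\cA_{X_l}$. Using the isomorphism $\cA_{X_l} \simeq \varprojlim_n \cA_{X_l}/\hbar^n \cA_{X_l}$ together with the short exact sequences
\begin{equation*}
0 \to \hbar^n \cA_{X_l}/\hbar^{n+1} \cA_{X_l} \to \cA_{X_l}/\hbar^{n+1} \cA_{X_l} \to \cA_{X_l}/\hbar^n \cA_{X_l} \to 0
\end{equation*}
and the identification $\hbar^n \cA_{X_l}/\hbar^{n+1} \cA_{X_l} \simeq \cO_{X_l}$, a Mittag-Leffler argument controlling the inverse limit reduces the claim to the classical vanishing $\Hn^q_{Z_l}(\cO_{X_l}) = 0$ for $q < \codim Z$, which is the Cohen-Macaulay property of the structure sheaf of the smooth complex manifold $X_l$.

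With these two ingredients in hand, the standard hypercohomology spectral sequence
\begin{equation*}
E_1^{p,q} = \Hn^q_{Z_l}(\cL^p) \Rightarrow \Hn^{p+q}_{Z_l}(\cM)
\end{equation*}
computes the local cohomology of $\cM$. Since each $\cL^p$ is a free $\cA_{X_l}^{loc}$-module of finite rank, $E_1^{p,q}$ is a finite direct sum of copies of $\Hn^q_{Z_l}(\cA_{X_l}^{loc})$, and hence vanishes unless $-r \leq p \leq 0$ and $q \geq \codim Z$. Consequently $E_1^{p,q} = 0$ whenever $p + q < \codim Z - r$, whence $\Hn^n_{Z_l}(\cM) = 0$ for all $n < \codim Z - r$.

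The principal technical step is the Cohen-Macaulay-type vanishing for $\cA_{X_l}^{loc}$; once that is in place, the homological argument is straightforward and parallel to the classical proof of \cite[Theorem 1.2.2]{KasKai} for microdifferential modules.
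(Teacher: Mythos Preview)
Your outline follows the same strategy the paper points to (Kashiwara--Kawai, Theorem~1.2.2): establish the Cohen--Macaulay vanishing $\Hn^q_{Z_l}(\cA_{X_l}^{loc})=0$ for $q<\codim Z$, take a bounded free resolution of $\cM$, and run the hypercohomology spectral sequence. The paper gives no further details beyond the reference, so in that sense your reconstruction is on target.

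There is one point that is not adequately justified. You deduce a free resolution of length $\le r$ from the hypothesis $\fExt^j_{\cA_{X_l}^{loc}}(\cM,\cA_{X_l}^{loc})=0$ for $j>r$ by invoking only that $\cA_{X_l}^{loc}$ is Noetherian of finite global dimension. Those two properties alone do \emph{not} force $\mathrm{pd}(\cM)\le r$: for a general Noetherian ring $R$ of finite global dimension it is not true that $\mathrm{pd}(M)=\sup\{j:\Ext^j_R(M,R)\neq 0\}$. What makes this work here is a stronger, ring--specific fact: the stalks $\cA_{X_l,x}$ are filtered rings whose associated graded is a commutative regular local ring, and for such rings one can transfer the commutative identity $\mathrm{pd}(M)=\sup\{j:\Ext^j(M,R)\neq 0\}$ via good filtrations (this is the Auslander-regularity package developed by Bj\"ork and used throughout \cite{Sch} and \cite{KS3}). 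Concretely, one shows that the $r$-th syzygy $\Omega^r\cM$ has $\fExt^{>0}(\Omega^r\cM,\cA^{loc})=0$, chooses an $\hbar$-torsion-free lattice, and reduces modulo $\hbar$ to the regular local ring $\cO_{X_l,x}$, where the commutative argument via minimal resolutions applies; freeness then lifts back. You should cite or sketch this step rather than attribute it to finite global dimension.

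A smaller remark: the Mittag--Leffler reduction of $\Hn^q_{Z_l}(\cA_{X_l})$ to $\Hn^q_{Z_l}(\cO_{X_l})$ is correct in outline, but the interchange of $\Rg_{Z_l}$ with the inverse limit over $n$ needs the $\varprojlim$--$\varprojlim^1$ exact sequence together with the observation that the transition maps $\Hn^q_{Z_l}(\cA_{X_l}/\hbar^{n+1})\to\Hn^q_{Z_l}(\cA_{X_l}/\hbar^{n})$ are surjective (since $\Hn^q_{Z_l}(\cO_{X_l})=0$ for $q<\codim Z$ kills the obstruction). This is routine, but worth one sentence.
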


\begin{lemma}\label{lem:ext}
	Let $\Lambda$ be a Lagrangian subvariety of $X$, $Z$ be a closed analytic subset of $\Lambda$ such that $\codim_\Lambda Z \geq 2$, $j\colon X \setminus Z \hookrightarrow X$ the inclusion and $\cM$ a holonomic $\chW_X$-module supported by $\Lambda$. Let $(f,f^\sharp) \colon (X_l, \cA_{X_l}) \to (X, \cA_{X})$ be a morphism of $\C$-ringed space such that $f^\sharp$ is flat. Set $V=X\setminus Z$ and assume that $f^{-1}(V)=V_l$. Let $f_V \colon V_l \to V$ be the restriction of $f$ to $f^{-1}(V)$. Then
	\begin{equation*}
	(\id_{(\C^\times)^{l}} \times j)_\ast f_V^\ast \cM \vert_V \simeq f^\ast \cM.
	\end{equation*}
\end{lemma}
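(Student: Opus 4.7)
The strategy is to identify $f^\ast \cM$ as an extension across $Z_l$ of its restriction to $V_l=f^{-1}(V)$ by exploiting the local cohomology triangle, and then to verify the relevant vanishing via Proposition~\ref{prop:annulationloccoh}. Concretely, the canonical distinguished triangle
\[
\Rg_{Z_l}(f^\ast \cM) \to f^\ast \cM \to \dR(\id_{(\C^\times)^l}\times j)_\ast(\id_{(\C^\times)^l}\times j)^{-1}f^\ast \cM \xrightarrow{+1}
\]
produces the exact sequence
\[
0\to \Hn^0_{Z_l}(f^\ast\cM)\to f^\ast\cM\to (\id\times j)_\ast(\id\times j)^{-1}f^\ast\cM\to \Hn^1_{Z_l}(f^\ast\cM).
\]
Since $f^{-1}(V)=V_l$ by hypothesis, $(\id\times j)^{-1}f^\ast\cM\simeq f_V^\ast(\cM|_V)$, so it suffices to show that $\Hn^0_{Z_l}(f^\ast\cM)$ and $\Hn^1_{Z_l}(f^\ast\cM)$ both vanish.

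To apply Proposition~\ref{prop:annulationloccoh} we need a bound $r$ on the vanishing of the Ext sheaves of $f^\ast \cM$. The key intermediate step is the flat base-change isomorphism
\[
\fExt^j_{\cA^{loc}_{X_l}}(f^\ast \cM,\,\cA^{loc}_{X_l})\;\simeq\; f^\ast\,\fExt^j_{\cA^{loc}_X}(\cM,\,\cA^{loc}_X).
\]
This is obtained by choosing locally a finite free resolution $\cP^\bullet\to\cM$ of the coherent $\cA^{loc}_X$-module $\cM$, pulling it back to a resolution $f^\ast\cP^\bullet\to f^\ast\cM$ using flatness of $f^\sharp$, and using that $\fHom(-,\cA^{loc}_X)$ commutes with $f^\ast$ on finite free modules (together with exactness of $f^\ast$).

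Because $X$ is symplectic and $\cM$ is holonomic with support in the Lagrangian $\Lambda$, the purity/duality for holonomic DQ-modules gives $\fExt^j_{\cA^{loc}_X}(\cM,\cA^{loc}_X)=0$ for $j>d$, where $d=\dim_\C\Lambda=\codim_X\Lambda$. Combined with the displayed isomorphism, $\fExt^j_{\cA^{loc}_{X_l}}(f^\ast\cM,\cA^{loc}_{X_l})=0$ for $j>d$, so Proposition~\ref{prop:annulationloccoh} applies with $r=d$. Since $Z_l=(\C^\times)^l\times Z$ sits inside $X_l=(\C^\times)^l\times X$ with $\codim_{X_l}Z_l=\codim_XZ=\codim_X\Lambda+\codim_\Lambda Z\geq d+2$, we obtain
\[
\Hn^k_{Z_l}(f^\ast\cM)=0\quad\text{for}\quad k<\codim_{X_l}Z_l-d=\codim_\Lambda Z\geq 2,
\]
which gives the desired vanishing of $\Hn^0_{Z_l}$ and $\Hn^1_{Z_l}$ and completes the argument.

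The step likely to require the most care is the flat base-change identification of Ext sheaves: it depends on the existence of local finite free resolutions for coherent $\cA^{loc}_X$-modules (i.e.\ local finiteness of the homological dimension) and on checking that the comparison morphism from $f^\ast$ of the Hom-complex to the Hom-complex of $f^\ast$ of the resolution is genuinely an isomorphism of complexes, not merely quasi-isomorphism. Once this is in place, the remaining combinatorics of codimensions and the invocation of the purity statement for holonomic DQ-modules are routine.
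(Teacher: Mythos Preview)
Your proof is correct and follows essentially the same route as the paper: the local cohomology exact sequence for the open inclusion $\id_{(\C^\times)^l}\times j$, the vanishing of $\fExt^j_{\cA^{loc}_{X_l}}(f^\ast\cM,\cA^{loc}_{X_l})$ for $j>d_X/2$ (your $d=\dim\Lambda$ equals $d_X/2$ since $\Lambda$ is Lagrangian), and the application of Proposition~\ref{prop:annulationloccoh} with the codimension count $\codim_{X_l}Z_l\geq d+2$. The paper is terser---it just says ``since $\cM$ is holonomic and $f^\ast$ is exact'' to obtain the Ext vanishing---while you spell out the flat base-change identification of the $\fExt$ sheaves explicitly; this extra care is justified but not a genuinely different idea.
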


\begin{proof}
	Since by definition of $f_V$ the below diagram commutes
	\begin{equation*}
	\xymatrix{X_l \ar[r]^-{f} & X\\
		V_l \ar[r]^-{f_V} \ar@{_(->}[u]^-{\id_{(\C^\times)^l}\times j} & V \ar@{_(->}[u]_-{j}
	}
	\end{equation*}
	it follows that
	\begin{equation*}
	f_V^\ast \cM \vert_V \simeq  (\id_{(\C^\times)^l} \times j)^{-1} f^\ast \cM.
	\end{equation*}
	Consider the following exact sequence
	\begin{equation*}
	0 \to \Hn^0_{Z_l}(f^\ast \cM) \to f^\ast \cM \to (\id_{(\C^\times)^l} \times j)_\ast (\id_{(\C^\times)^l} \times j)^{-1} f^\ast \cM \to \Hn^1_{Z_l}(f^\ast \cM) \to 0.
	\end{equation*}
	Since $\cM$ is holonomic and $f^\ast$ is exact, it follows that for any $j >d_X/2$ where $d_X$ is the dimension of $X$
	\begin{equation*}
	\fExt^j_{\cA^{loc}_{X_l}}(f^\ast \cM,\cA^{loc}_{X_l})=0.
	\end{equation*}
	Thus, by Proposition \ref{prop:annulationloccoh}, $\Hn^j_{Z_l}(f^\ast \cM)=0$ for $0 \leq j < 2$. Then, the above exact sequence implies that
	\begin{equation*}
	(\id_{(\C^\times)^l} \times j)_\ast (\id_{(\C^\times)^l} \times j)^{-1} f^\ast \cM \simeq f^\ast \cM.
	\end{equation*}
	Hence,
	\begin{equation*}
	(\id_{(\C^\times)^l} \times j)_\ast f_V^\ast \cM \vert_V \simeq f^\ast \cM.
	\end{equation*}
\end{proof}

\begin{lemma}\label{lem:actionextension}
	Assume that $X$ is conical and does not intersect the zero section. Let $\Lambda$ be a conical Lagrangian subvariety of $X$, let $Z$ be a closed conical analytic subset of $\Lambda$ such that $\codim_\Lambda Z \geq 2$, $j\colon X \setminus Z \hookrightarrow X$ the inclusion and $\cM$ a holonomic $\chW_X$- module supported in $\Lambda$ such that $\cM \in \Mod_\mathrm{F}(\chW_X|_{X \setminus Z})$. Then $\cM \in \Mod_\mathrm{F}(\chW_X)$.
\end{lemma}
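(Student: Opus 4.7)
The plan is to extend the F-action isomorphism $\phi_V \colon \mu^\ast \cM|_{V_1} \stackrel{\sim}{\to} p_2^\ast \cM|_{V_1}$ (with $V := X \setminus Z$ and $V_1 := \C^\times \times V$) from $V_1$ to all of $X_1 = \C^\times \times X$ by invoking Lemma \ref{lem:ext}, and then to verify the cocycle diagram \eqref{diag:asso} by applying the same lemma on $X_2 := (\C^\times)^2 \times X$.

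First, I would check that Lemma \ref{lem:ext} applies to both morphisms $(\mu, \theta)$ and $(p_2, p_2^\sharp)$, viewed as morphisms of $\C$-ringed spaces $(X_1, \cA_{X_1}^{loc}) \to (X, \cA_X^{loc})$. The flatness of $\theta$ was established earlier in this section, while that of $p_2^\sharp$ is \cite[Lemma 2.3.2]{KS3}. The preimage condition $f^{-1}(V) = V_1$ holds for both: conicity of $Z$ yields $\mu^{-1}(Z) = \C^\times \times Z = Z_1$, and $p_2^{-1}(Z) = Z_1$ is automatic. Lemma \ref{lem:ext} then furnishes canonical isomorphisms
\begin{equation*}
(\id \times j)_\ast \mu_V^\ast (\cM|_V) \simeq \mu^\ast \cM, \qquad (\id \times j)_\ast (p_2)_V^\ast (\cM|_V) \simeq p_2^\ast \cM,
\end{equation*}
and applying the functor $(\id \times j)_\ast$ to $\phi_V$ produces an isomorphism of $\cA_{X_1}^{loc}$-modules $\phi \colon \mu^\ast \cM \stackrel{\sim}{\to} p_2^\ast \cM$ extending $\phi_V$.

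Next, I would verify the associativity diagram \eqref{diag:asso} on $X_2$. Writing $Z_2 := (\C^\times)^2 \times Z$ and $V_2 := X_2 \setminus Z_2$, one still has $\codim_{(\C^\times)^2 \times \Lambda} Z_2 \geq 2$, and each of the five pullbacks of $\cM$ appearing in \eqref{diag:asso} is of the form $f^\ast \cM$ for some composite morphism $f \colon X_2 \to X$ built out of $\mu$, $\sigma$, $p_2$ and the $q_{ij}$. The structure morphisms of these composites are all flat (combining the earlier flatness results for $\theta$, $\lambda$, $\alpha$ with those for $p_i^\sharp$ and $q_{ij}^\sharp$), and their preimages of $V$ coincide with $V_2$ thanks to the conicity of $Z$. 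Hence Lemma \ref{lem:ext} applied with $l = 2$ identifies each such pullback with the $(\id_{(\C^\times)^2} \times j)_\ast$-pushforward of its restriction to $V_2$. Since the diagram \eqref{diag:asso} for $\cM|_V$ commutes on $V_2$ by hypothesis, its commutativity on $X_2$ follows by functoriality of $(\id_{(\C^\times)^2} \times j)_\ast$.

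The main obstacle will be the bookkeeping required to verify the hypotheses of Lemma \ref{lem:ext} for every composite morphism into $X$ that appears in diagram \eqref{diag:asso}: one must check flatness of each composite structure morphism and, more delicately, that its preimage of $V$ equals the expected complement of $Z_l$. The latter point is exactly where the conicity of $Z$ enters in an essential way; once this is verified systematically, both the existence of $\phi$ and its coherence reduce to functorial pushforward along $(\id \times j)$ or $(\id_{(\C^\times)^2} \times j)$.
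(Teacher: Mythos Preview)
Your proposal is correct and follows essentially the same approach as the paper: extend $\phi_V$ by applying $(\id_{\C^\times}\times j)_\ast$ and invoking Lemma \ref{lem:ext} for $\mu$ and $p_2$, then verify diagram \eqref{diag:asso} by the analogous pushforward along $(\id_{(\C^\times)^2}\times j)$. If anything, you are more explicit than the paper in checking the flatness and preimage hypotheses of Lemma \ref{lem:ext} for each composite morphism, and in noting where conicity of $Z$ is used.
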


\begin{proof}
	Set $V=X \setminus Z$; On $\C^\times \times V$, the $\mathrm{F}$-action is given by
	\begin{equation*}
	\phi^\prime \colon \mu_V^\ast \cM \vert_V \to p_{2,V}^\ast\cM \vert_V.
	\end{equation*}
	Applying $(\id_ {\C^\times} \times j)_\ast$, we get
	\begin{equation*}
	(\id_ {\C^\times} \times j)_\ast \phi \colon (\id_ {\C^\times} \times j)_\ast \mu_V^\ast \cM \vert_V \to (\id_ {\C^\times} \times j)_\ast p_{2,V}^\ast \cM \vert_V.
	\end{equation*}
	By Lemma \ref{lem:ext},
	\begin{align*}
	(\id_{\C^\times} \times j)_\ast \mu_V^\ast \cM \vert_V \simeq \mu^\ast \cM, && (\id_{\C^\times} \times j)_\ast p_{2,V}^\ast \cM \vert_V \simeq p_2^\ast \cM.
	\end{align*}
	
	Thus, we obtain a morphism
	\begin{equation*}
	\phi \colon \mu^\ast \cM \to p_2^\ast \cM.
	\end{equation*}
	Applying $(\id_{(\C^\times)^2} \times j)_\ast$ to the below diagram diagram
	\begin{equation*}
	\scalebox{0.78}{\xymatrix{
			(\id_{\C^\times} \times \mu_V)^{\ast} \mu_V^{\ast} \cM\vert_V \ar[rr]^-{(\id_{\C^\times} \times \mu_V)^{\ast}\phi^\prime} \ar@{=}[dd] 
			&& (\id_{\C^\times} \times \mu_V)^{\ast} p_{2,V}^\ast \cM  \vert_V \ar[r]^-{\sim} 
			& q^\ast_{12,V} \mu_V^\ast \cM \vert_V \ar[r]^{q_{12,V}^\ast \phi} 
			& q_{12,V}^\ast p_{2,V}^\ast \cM \vert_V  \ar[d]^-{\wr}\\ 
			&&&&  q_{3,V}^\ast \cM \vert_V \\
			(\sigma \times \id_V)^{\ast} \mu_V^{\ast} \cM\vert_V \ar[rrrr]^-{(\sigma \times \id_V)^{\ast} \phi^\prime} 
			&&&& (\sigma \times \id_V)^{\ast} p_{2,V}^\ast \cM \vert_V \ar[u]_-{\wr}.
	}}
	\end{equation*}
	and using the isomorphisms provided by Lemma \ref{lem:ext}, we obtain a commutative diagram identical to Diagram \eqref{diag:asso}. This shows that $\phi$ is an $\mathrm{F}$-action.
	
\end{proof}

We now deduce the codimension-three conjecture for formal microdifferential modules from the one for DQ-modules.
\begin{theorem}[{\cite[Theorem 1.2]{KV}}]
	Let $M$ be a complex manifold, $X$ an open subset of $T^\ast M$, $\Lambda$ a closed Lagrangian analytic subset of $X$, and $Z$ a closed analytic subset of $\Lambda$ such that $\codim_\Lambda Z \geq 3$. Let $\chE_X$ the sheaf of formal microdifferential operators on $X$ and $\cN$ be a holonomic $(\chE_X|_{X \setminus Z})$-module whose support is contained in $\Lambda \setminus Z$. Assume that $\cN$ possesses an $(\chE_X(0)|_{X \setminus Z})$-lattice $\cN_0$. Then $\cN$ extends uniquely to a holonomic module defined on $X$ whose support is contained in $\Lambda$.
\end{theorem}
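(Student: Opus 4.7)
The strategy is to translate $\cN$ into an F-equivariant DQ-module via the equivalence of Proposition \ref{prop:exemple}, apply Theorem \ref{thm:codim} upstairs to obtain the extension, promote it to an F-equivariant module using Lemma \ref{lem:actionextension}, and push it back down by $p_\ast^{\C^\times}$. First I would reduce to the case where $X$ is a conic open subset of $\dot T^\ast M$ and $Z \subset \Lambda$ is conic: microdifferential modules are local and $\C^\times$-conic in nature, so one may replace $X$ and $Z$ by their $\C^\times$-saturations. The portion of $\Lambda$ meeting the zero section is a union of components of $M$ itself and is handled by classical arguments; so without loss of generality $X$ does not meet the zero section. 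Set $Y = X/\C^\times$, $W = Z/\C^\times$, $p \colon X \to Y$, and $j \colon X \setminus Z \hookrightarrow X$.

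By Proposition \ref{prop:exemple} applied to $Y \setminus W$, the module $\cM := p^\ast_{\C^\times}\cN$ is a coherent F-equivariant $\chW_{X\setminus Z}$-module, with support $\Lambda \setminus Z$, hence holonomic; and the given lattice $\cN_0$ induces a $\chW_{X\setminus Z}(0)$-lattice
\begin{equation*}
\cM_0 := \chW_{X\setminus Z}(0) \otimes_{p^{-1}\chE_{Y\setminus W}(0)} p^{-1}\cN_0
\end{equation*}
of $\cM$. Theorem \ref{thm:codim} then produces a unique holonomic $\chW_X$-module $\widetilde\cM := j_\ast \cM$ extending $\cM$, with support contained in $\Lambda$. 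Because $\codim_\Lambda Z \geq 3 \geq 2$ and $\widetilde\cM$ is holonomic, Lemma \ref{lem:actionextension} promotes $\widetilde\cM$ to an object of $\Mod_{\mathrm{F},\coh}(\chW_X)$ extending the F-action on $\cM$. Setting
\begin{equation*}
\widetilde\cN := p_\ast^{\C^\times}\widetilde\cM,
\end{equation*}
Lemma \ref{lem:conservationcoherent} yields $\widetilde\cN \in \Mod_\coh(\chE_Y)$.

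It remains to check that $\widetilde\cN$ is the desired extension and to prove uniqueness. Exactness of $p_\ast^{\C^\times}$ (Proposition \ref{prop:exactness}) combined with Proposition \ref{prop:exemple} gives $\widetilde\cN|_{Y\setminus W} \simeq p_\ast^{\C^\times}\cM \simeq \cN$; the support of $\widetilde\cN$ lies in $\Lambda/\C^\times$ since that of $\widetilde\cM$ is conic and contained in $\Lambda$; and holonomy of $\widetilde\cN$ is inherited from that of $\widetilde\cM$ through the equivalence. For uniqueness, any holonomic $\chE_Y$-module $\cN'$ extending $\cN$ with support in $\Lambda/\C^\times$ yields, via $p^\ast_{\C^\times}$, a holonomic F-equivariant $\chW_X$-module extending $\cM$ with support in $\Lambda$; the uniqueness clause of Theorem \ref{thm:codim} forces this pullback to be isomorphic to $\widetilde\cM$ as a $\chW_X$-module, and the isomorphism is automatically F-equivariant by the uniqueness built into Lemma \ref{lem:actionextension}. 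Applying $p_\ast^{\C^\times}$ and invoking the counit isomorphism from Theorem \ref{thm:equimicroDQ} gives $\cN' \simeq \widetilde\cN$. The main technical subtlety is the extension of the F-action across $Z$ via Lemma \ref{lem:actionextension}: this rests on the vanishing of the local cohomology sheaves $\Hn^j_{Z_l}$ provided by Proposition \ref{prop:annulationloccoh}, which allows the diagram \eqref{diag:asso} to be recovered by pushforward along $\id \times j$. Everything else is a formal manipulation of the two adjunctions furnished by Proposition \ref{prop:exemple} and Proposition \ref{prop:adjunction}.
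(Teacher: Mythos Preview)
Your overall strategy matches the paper's exactly: pull $\cN$ up to an F-equivariant $\chW$-module via $p^\ast_{\C^\times}$, apply the DQ codimension-three theorem to $j_\ast\cM$, extend the F-action across $Z$ by Lemma~\ref{lem:actionextension}, and push back down with $p_\ast^{\C^\times}$ using Proposition~\ref{prop:exemple}. Your uniqueness argument via the equivalence is in fact more detailed than the paper's, which simply refers to~\cite{KV}.

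The one place where your proposal is weaker than the paper is the reduction step. The paper invokes the \emph{dummy variable trick} twice: once to ensure $X$ avoids the zero section, and once to descend from $\dot T^\ast M$ to $P^\ast M$ so that Proposition~\ref{prop:exemple} applies directly. Your substitute---``the portion of $\Lambda$ meeting the zero section is a union of components of $M$ itself and is handled by classical arguments''---is not correct: a conic Lagrangian can meet the zero section along an arbitrary closed subvariety of $M$ (e.g.\ the conormal bundle $T^\ast_N M$ meets it along $N$), so there is no clean case split here. Likewise, ``replace $X$ and $Z$ by their $\C^\times$-saturations'' is fine for the sets, but you still need to identify $\cN$ with a module on the quotient $Y\setminus W$ before invoking Proposition~\ref{prop:exemple}; the paper's second use of the dummy variable trick is precisely what makes this reduction clean. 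Once you replace your reduction paragraph with the dummy variable trick, the rest of your argument goes through and coincides with the paper's.
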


\begin{proof}
	\noindent (i) The unicity is proved as in \cite{KV}. Thus we do not repeat the proof here.

	\noindent (ii) By the dummy variable trick, we can assume that $X$ does not intersect the zero section of $T^\ast M$. Applying again the dummy variable trick, we see that it is sufficient to prove the codimension-three conjecture in the following situation: Let $X$ be an open subset of $P^\ast M$, $\chE_X$ the sheaf of microdifferential operators seen as a sheaf on $P^\ast M$ (as it is constant along the fibers of the projection $p \colon \dot{T^\ast}M \to P^\ast M$ this is always possible), $\Lambda$ a closed Lagrangian analytic subset of $X$, and $Z$ a closed analytic subset of $\Lambda$ such that $\codim_\Lambda Z \geq 3$, $\cN$ be a holonomic $(\chE_X|_{X \setminus Z})$-module whose support is contained in $\Lambda \setminus Z$ and possessing an $(\chE_X(0)|_{X \setminus Z})$-lattice $\cN_0$. We now assume that we are in this setting and set $\widetilde{X}=p^{-1}(X)$, $\widetilde{\Lambda}=p^{-1}(\Lambda)$, $\widetilde{Z}=p^{-1}(Z)$. We still write $p \colon \widetilde{X} \to X$ for the restriction of the projection $p \colon \dot{T}^\ast M  \to P^\ast M$. 
	
	We will now apply the codimension-three for DQ-modules. The module $\cM:= p_{\C^\times}^\ast \cN$ belongs to $\Mod_{\mathrm{F}, \coh} (\chW_{\widetilde{X}}|_{\widetilde{X} \setminus \widetilde{Z}})$, has a $\chW_{\widetilde{X}}(0)\vert_{\widetilde{X} \setminus \widetilde{Z}}$-lattice and
	its support is contained in $\widetilde{\Lambda} \setminus \widetilde{Z}$. By \cite[Theorem 1.4]{codim}, $j_\ast \cM$ is an holonomic module supported by $\widetilde{\Lambda}$ and is also endowed with an $\mathrm{F}$-action by Lemma \ref{lem:actionextension}. Moreover it follows from Proposition \ref{prop:exemple} that $p_\ast^{\C^\times}j_\ast\cM$ is a coherent  $\chE_{X}$-module such that
	\begin{equation*}
	(p_\ast^{\C^\times}j_\ast\cM)|_{X \setminus Z} \simeq p_\ast^{\C^\times}((j_\ast\cM)|_{\widetilde{X} \setminus \widetilde{Z}})\simeq \cN.
	\end{equation*}
	This proves that $p_\ast^{\C^\times}j_\ast\cM$ is a coherent extension of $\cN$. This implies that $j_\ast \cN$ is a coherent $\chE_X$-module.
\end{proof}

\end{document}